\documentclass[a4paper, 11pt]{amsart}

\usepackage{geometry}

\usepackage{graphicx}

\usepackage[british,UKenglish,USenglish,english,american]{babel}
\usepackage{newlfont}
\usepackage{amssymb}
\usepackage{amsmath}
\usepackage{amsfonts}
\usepackage{latexsym}
\usepackage{amsthm}
\usepackage{mathrsfs}
\usepackage{stmaryrd}
\usepackage{caption}
\usepackage{enumerate}
%\usepackage{hyperref}
%\usepackage{natbib}
%\usepackage[notref,notcite]{showkeys}

%\usepackage[matrix,arrow,tips,curve]{xy}
%\input{xy}
%\xyoption{all}

%\usepackage[all,cmtip]{xy}

\usepackage{tikz-cd}

%\UseRawInputEncoding

\usepackage{amsmath,amssymb,amscd,xypic}
\usepackage{amsfonts}
\usepackage[english]{babel}
\usepackage[leqno]{amsmath}
\usepackage{mathrsfs} 
\usepackage{enumerate}
%\usepackage{epsfig}
%\usepackage{relsize}

%\usepackage[usenames,dvipsnames]{xcolor}

%\usepackage[backref=page]{hyperref}
%\hypersetup{
% colorlinks,
% citecolor=Green,
% linkcolor=Red,
% urlcolor=Blue}
%\usepackage[matrix,arrow,tips,curve]{xy}
%\input{xy}
%\xyoption{all}

%\usepackage{showkeys}

%\usepackage[usenames,dvipsnames]{xcolor}

\usepackage{layout}
\usepackage{fullpage}

\usepackage[backref=page]{hyperref}

\hypersetup{
 colorlinks,
 citecolor=green,
 linkcolor=blue,
 urlcolor=blue}

\theoremstyle{plain}                    %stile corsivo
\newtheorem{teo}{Theorem}[section]     %definizione ambiente teorema

\newtheorem{prop}[teo]{Proposition}

\newtheorem{fact}[teo]{Fact}
\newtheorem{cor}[teo]{Corollary}       %definizione ambiente corollario

\newtheorem*{ThmA}{Theorem A}
\newtheorem*{ThmB}{Theorem B}

\newtheorem{lem}[teo]{Lemma}            %definizione ambiente lemma
\theoremstyle{definition}               %stile roman

\newtheorem*{convention}{Convention}
\newtheorem{defin}[teo]{Definition}%definizione ambiente definizione
 %definizione ambiente esempio

\theoremstyle{remark}
\newtheorem{rmk}[teo]{Remark}

\numberwithin{equation}{section}
\setcounter{tocdepth}{1}

\newenvironment{sis}{\left\{\begin{aligned}}{\end{aligned}\right.}

% Commenti al file 

% blackboard bolds

\newcommand{\bbL}{{\mathbb L}}

\newcommand{\bbC}{{\mathbb C}}
\newcommand{\bbQ}{{\mathbb Q}}
\newcommand{\bbZ}{{\mathbb Z}}
\newcommand{\bbP}{{\mathbb P}}

% scripts

\newcommand{\cB}{{\mathcal B}}
\newcommand{\cC}{{\mathcal C}}

\renewcommand{\cL}{{\mathcal L}}

\newcommand{\cM}{{\mathcal M}}

\newcommand{\cO}{{\mathcal O}}

\newcommand{\cX}{{\mathcal X}}
\newcommand{\cY}{{\mathcal Y}}

\newcommand{\un}{\underline}
\newcommand{\ov}{\overline}
\newcommand{\wt}{\widetilde}

\renewcommand{\rm}{\mathrm}

\numberwithin{equation}{subsection}

\newcommand{\oo}{\mathcal{O}}
\newcommand{\mt}{\mathcal}

\DeclareMathOperator{\Aut}{Aut}
\DeclareMathOperator{\Spec}{Spec}

\DeclareMathOperator{\RPic}{RPic}
\DeclareMathOperator{\NS}{NS}
\DeclareMathOperator{\Hom}{Hom}

\DeclareMathOperator{\coker}{coker}
\renewcommand{\Im}{\text{Im}}

\DeclareMathOperator{\id}{id}
\DeclareMathOperator{\res}{res}
\DeclareMathOperator{\red}{red}
\DeclareMathOperator{\irr}{irr}
\DeclareMathOperator{\rk}{rk}
\DeclareMathOperator{\ta}{taut}
\DeclareMathOperator{\tor}{tor}
\DeclareMathOperator{\tf}{tf}
\renewcommand{\char}{\text{char}}
\DeclareMathOperator{\Gm}{\mathbb{G}_{m}}

\DeclareMathOperator{\Ga}{\mathbb{G}_{\mathrm a}}
\DeclareMathOperator{\PGL}{PGL}

\DeclareMathOperator{\Sch}{Sch}

\DeclareMathOperator{\AbGr}{AbGrps}

\DeclareMathOperator{\an}{an}

\DeclareMathOperator{\rel}{rel}
\DeclareMathOperator{\sm}{sm}

% Stack Mgn e famiglia universale 
\newcommand{\Mg}{\mathcal M_{g,n}}
\newcommand{\Mgb}{\ov{\mathcal M}_{g,n}}
\newcommand{\Cg}{\mathcal C_{g,n}}
\newcommand{\Cgb}{\ov{\mathcal C}_{g,n}}

%Picard group and functor
 
\DeclareMathOperator{\Pic}{Pic}
\DeclareMathOperator{\Cl}{Cl}
\DeclareMathOperator{\PIC}{\textbf{Pic}}
\DeclareMathOperator{\NSb}{\textbf{NS}}

\newcommand{\bl}{\bullet}
\newcommand{\lie}{\operatorname{Lie}}

%Witt functors
\newcommand{\wpr}{\underline{W}}
\newcommand{\fwr}{\widehat{\underline{W}}}

\title{On the Picard group scheme of the moduli stack of stable pointed curves}

\author{Roberto Fringuelli}
\address{Roberto Fringuelli, Dipartimento di Matematica, Universit\`a di Roma La Sapienza, Piazzale Aldo Moro 5, I-00185 Roma Italy}
\email{r.fringuelli@uniroma1.it}

\author{Filippo Viviani}
\address{Filippo Viviani,
Dipartimento di Matematica e Fisica 
Universit\`a Roma Tre 
Largo San Leonardo Murialdo  
I-00146 Roma  Italy }
\email{viviani@mat.uniroma3.it}

\begin{document}

%\keywords{Compactified Jacobians, Fourier-Mukai transform, Poincar\'e bundle.}

%\subjclass[2010]{14H40, 14H20, 14D20, 14F05, 14B07.}

\begin{abstract}
The aim of the present paper is to study the (abstract) Picard group and the Picard group scheme  of the moduli stack of stable pointed curves over an arbitrary scheme. As a byproduct, we compute the Picard groups of the moduli stack of stable or smooth pointed curves over a field of  characteristic different from two. 
\end{abstract}

\maketitle

%\tableofcontents

\section{Introduction.}

For any pair of integers $g,n\geq 0$ such that $3g-3+n>0$, let  $f:\Mgb\to \Spec \bbZ$ be the stack of stable $n$-pointed curves of genus $g$ and denote by  $f_S:\Mgb^S\to S$ its base change over a scheme $S$. 
The aim of this paper is to describe the (abstract) Picard group $\Pic(\Mgb^S)$ of $\Mgb^S$ and the Picard group (algebraic) space $\PIC_{\Mgb^S/S}$  (which turns out to be a group scheme, see Proposition \ref{P:propPic}) of $f_S:\Mgb^S\to S$, which represents the fppf sheafification  of the relative Picard group functor of $f_S$ (see Section \ref{Sec:PicSpa}).

%which turns out to be a commutative group scheme separated and locally quasi-finite over $S$ (see Proposition \ref{P:propPic}).

In order to describe our main result, let us fix some notation. Denote by $\Lambda_{g,n}$ the abelian group  generated by 
$$\un{\lambda}, \: \un{\delta_{\irr}} \: \text{ and } \{\un{\delta_{a,A}}\: : 0\leq a \leq g, \emptyset \subseteq A\subseteq [n]:=\{1,\ldots, n\} \text{ with } (a,A)\neq (0,\emptyset), (g, [n])\} $$
subject to the following relations
\begin{equation}\label{E:rela}
\begin{sis}
& \un{\delta_{a,A}}=\un{\delta_{g-a,A^c}}, \\
& 10\un{\lambda}=\un{\delta_{\irr}}+2\sum_{\substack{A\subseteq  [n]}}\un{\delta_{1,A}} & \text{ if } g=2, \\
& 12\un{\lambda}=\un{\delta_{\irr}}& \text{ if } g=1, \\
& \un{\lambda}+\sum_{p\in A} \un{\delta_{0,A}}=0 \: \text{ for any } 1\leq p \leq n & \text{ if } g=1, \\
& \un{\lambda}=\un{\delta_{\irr}}=0& \text{ if } g=0,\\
& \sum_{\substack{z\in A\\x,y\not\in A}}\un{\delta_{0,A}}=0 \: \text{ for any pairwise distinct } x,y,z\in \{1,\ldots, n\}  & \text{ if } g=0,\\
& \sum_{\substack{p,q\in A \\r,s\not\in A}}\un{\delta_{0,A}}= \sum_{\substack{p,r\in A \\q,s\not\in A}}\un{\delta_{0,A}} \: \text{ for any pairwise distinct } p,q,r,s\in \{1,\ldots, n\}  & \text{ if } g=0.\\
\end{sis}
\end{equation}
Note that $\Lambda_{g,n}$ is a free abelian group of finite rank.  For any scheme $S$, we will denote by $\un{\Lambda_{g,n}}_S$ the constant commutative group scheme over $S$ associated to $\Lambda_{g,n}$.

In Section \ref{S:homPhi}, we show that there exists a group homomorphism (that we call the \emph{tautological homomorphism})
\begin{equation}\label{E:Pgnbar}
\ov P_{g,n}:\Lambda_{g,n}\to \Pic(\Mgb),
\end{equation}
that sends each generator of $\Lambda_{g,n}$ into the tautological line bundle on $\Mgb$ with the same name, using the standard convention that $\delta_{0,\{i\}}=\delta_{g,\{i\}^c}:=-\psi_i$.
For any scheme $\phi:S\to \Spec \bbZ$, we define 
%\Filippo{Ho distinto questi due omomorfismi: uno serve per definire $\Phi_{g,n}^S$, l'altro appare nel Main Theorem}\Roberto{ok}
\begin{equation}\label{E:PgnSbar}
\ov P_{g,n}(S):\Lambda_{g,n}\xrightarrow{\ov P_{g,n}} \Pic(\Mgb)\xrightarrow{\ov \phi^*} \Pic(\Mgb^S),
\end{equation}
\begin{equation}\label{E:PgnSbarrel}
\ov P_{g,n}^{\rel}(S):\Lambda_{g,n}\xrightarrow{\ov P_{g,n}} \Pic(\Mgb)\xrightarrow{\ov \phi^*} \Pic(\Mgb^S)\twoheadrightarrow \frac{\Pic(\Mgb^S)}{f_S^*(\Pic S)},
\end{equation}
$\ov \phi:\Mgb^S\to \Mgb$ is the base change morphism induced by $\phi$.
 By construction, the  homomorphisms $\{\ov P^{\rel}_{g,n}(S)\}_S$ define a natural transformation from the constant presheaf $\Lambda_{g,n}$ onto the relative Picard functor of $f:\Mgb\to \Spec \bbZ$;
 hence, by passing to the associated sheaves (for the fppf topology), we get  a homomorphism of commutative group schemes,  compatible with base change,  that we call the \emph{tautological $S$-morphism}:
\begin{equation}\label{E:Phi}
\Phi^S_{g,n}:\un{\Lambda_{g,n}}_S\to \PIC_{\Mgb^S/S}.
\end{equation}

The first main result of this paper is the following 

\begin{ThmA}
Assume that $S$ is a scheme over $\Spec \bbZ[1/2]$ (or equivalently  that the residue field $k(s)$ of every $s\in S$ has characteristic different from $2$) or that $g\leq 5$.
%$\Pi_S^{\tau}:\PIC^{\tau}_{\Mgb^S/S}\to S$ is the trivial group algebraic space for every $S$ (i.e. Conjecture \ref{C:conjB} holds true) and
\begin{enumerate}
\item  \label{T:mainThm1} The homomorphism $\Phi_{g,n}^S$ of \eqref{E:Phi} is an isomorphism.
\item \label{T:mainThm2} If  $S$ is connected, then there is an isomorphism (functorial in $S$)
$$
\Pic(S)\times \Lambda_{g,n} \xrightarrow[\cong]{f_S^*\times \ov P_{g,n}(S)}\Pic(\Mgb^S).
$$
\end{enumerate}
\end{ThmA}
The above Theorem A was proved for $\ov\cM_{1,1}$ by Fulton-Olsson \cite{FO} over an arbitrary scheme $S$ (not necessarily over $\Spec \bbZ[1/2]$). The proof in loc. cit. uses an explicit presentation of the stack  of elliptic curves as a quotient stack (via the the Weierstrass model), and hence it cannot be easily 
extended to other pairs $(g,n)$. 
In Remark \ref{R:bad}, we discuss what  happens if the assumptions of Theorem A  are not satisfied. 
%\Filippo{Ho messo questo Remark in cui si discute il caso brutto e qui lo cito}

%\Filippo{DIRE QUALCOSA SUL CASO BRUTTO. Dovrebbe essere, a occhio, che  $\PIC_{\Mgb^S/S}$ e' unramified sopra $S$ e $\Phi^S_{g,n}$ e' un embedding (chiuso o aperto)?}\Roberto{se non ho capito male come funziona. l'immagine dovrebbe essere il piu' grande sottogruppo piatto di $\PIC_{\Mgb^S/S}$. Sbaglio?}\Filippo{Non sempre e' cosi. Per esempio, se $S=\Spec k$ con $\char(k)=2$ e $g\geq 6$, allora sicuramente $\PIC_{\Mgb^k/k}$ e' piatto, pero' potrebbe essere diverso da $\un{\Lambda_{g,n}}_{k}$.}\Roberto{Forse dico una sciocchezza, ma seguendo i risultati dovremmo avere un isomorfismo di schemi su $\bbZ$
%$$
%\overline{\Phi}_{g,n}:\un{\Lambda_{g,n}}\cong \PIC_{\Mgb}/\PIC^\tau_{\Mgb}
%$$
%O meglio ancora abbiamo una successione esatta che splitta
%$$
%0\to \PIC^\tau_{\Mgb}\to \PIC_{\Mgb}\to \un{\Lambda_{g,n}}\to 0
%$$
%la sezione e' il morfismo $\Phi$. Quindi $\PIC_{\Mgb}=\PIC^\tau_{\Mgb}\times \un{\Lambda_{g,n}}$ (funziona anche schemi in gruppi spero). Se la frase prima e' giusta l'immagine di $\Phi$ e' chiusa.
%}
%\Filippo{Si, forse e' quello che accade. La cosa divertente e' che in generale non esiste il quoziente  $ \PIC_{\Mgb}/\PIC^\tau_{\Mgb}$ su una base qualsiasi, e avevo trovato dei teoremi di Raynaud che dicevano che il quoziente esiste se $\PIC^\tau_{\Mgb}$ e' piatto (piu' qualche altra condizione, forse). Invece nel nostro caso, e' proprio la piattezza di $\PIC^\tau_{\Mgb}$ che manca, ma ciononostante il quoziente sembra esistere...ci penso}
 
Theorem A allows us to compute the Picard group of $\Mgb^k$ and its open substack $\Mg^k$ parametrizing smooth $n$-pointed curves over a (not necessarily algebraically closed) field $k$  if either $\char(k)\neq 2$ or $g\leq 5$.

\begin{ThmB}
Let $k$ be  a field. Assume that either the characteristic of $k$ is different from $2$ or that $g\leq 5$.
\begin{enumerate}
\item \label{C:CorMain1} The Picard group $\Pic(\Mgb^k)$ of $\Mgb^k$ is  a free abelian group of finite rank, generated by the tautological line bundles subject to the unique relations \eqref{E:rela}.
 \item \label{C:CorMain2} The Picard group $\Pic(\Mg^k)$ of $\Mg^k$ is generated by the tautological line bundles $\{\lambda,\psi_1,\ldots, \psi_n\}$ subject to the unique relations
 \begin{equation}\label{E:rela-open}
 \begin{sis}
 &10\lambda=0 & \text{ if }g=2,\\
 & 12 \lambda=0 \quad \text{and }\quad \lambda=\psi_1=\ldots=\psi_n & \text{if }g=1,\\
 & 0=\lambda=\psi_1=\ldots=\psi_n & \text{ if }g=0.
 \end{sis}
 \end{equation}
% In other words, we have that 
%\begin{equation}\label{E:PicMg-exp}
%\Pic(\Mg^k)=
%\begin{cases}
%\bbZ\langle \lambda\rangle \oplus \bbZ\langle \psi_1\rangle\oplus \cdots \oplus \bbZ\langle \psi_n\rangle \cong \bbZ^{n+1}& \text{ if } g\geq 3, \\
%\frac{\bbZ}{10\bbZ} \langle \lambda\rangle \oplus \bbZ\langle \psi_1\rangle\oplus \cdots \oplus \bbZ\langle \psi_n\rangle \cong \frac{\bbZ}{10\bbZ}\oplus \bbZ^{n}& \text{ if } g=2, \\
%\frac{\bbZ}{12\bbZ} \: \text{ generated by } \lambda=\psi_1=\ldots=\psi_n & \text{ if } g=1, \\
%0 & \text{ if } g=0.
%\end{cases} 
% \end{equation}
\end{enumerate}
\end{ThmB}

Note that Theorem B(1)  is the special case of Theorem A(2) where $S=\Spec k$ and it was the original motivation of our work. However, our proof of Theorem B(2)
requires, at least for $g\geq 6$, the study of the Picard group scheme of $\Mgb^{W(\ov k)}$ over  the spectrum of the ring of Witt vectors of the algebraic closure $\ov k$ of $k$.

%However, the proof of Theorem B(1) requires, for $g\geq 6$,  the study of the Picard group scheme of $\Mgb$. 

The above Theorem B was proved for $k=\bbC$ and $g\geq 3$ by Arbarello-Cornalba \cite{AC87}, building upon the topological proof by Harer \cite{Har83, Har85} that $\Pic(\Mg^{\bbC})$ is free of rank $n+1$ if $g\geq 3$. Moreover, the following  special cases of  Theorem B were already known:
%(and indeed, we are going to use some of  these special results in our proofs)
\begin{itemize}
\item $\Pic(\ov\cM_{0,n}^k)$ and $\Pic(\cM_{0,n}^k)$ were known  by the work of Keel \cite{Kee92};
\item $\Pic(\ov\cM_{1,1}^k)$ and $\Pic(\cM_{1,1}^k)$ were computed by Fulton-Olsson \cite{FO} (extending to characteristic $2$ and $3$ the previous results of Mumford \cite{Mum65} and Edidin-Graham \cite[Prop. 21]{EG98});
\item $\Pic(\ov\cM_{1,2}^k)$ and $\Pic(\cM_{1,2}^k)$ were computed by Di Lorenzo-Pernice-Vistoli \cite{diLPV} and Inchiostro \cite{Inc} if $\char(k)\neq 2,3$;
\item $\Pic(\cM_{2,0}^k)$ was computed by Vistoli \cite{Vis98} if  $\char(k)\neq 2$
%\footnote{The hypothesis that $\char(k)$ is also different from $3$ in loc. cit. is only need in the computation of the higher Chow groups of $\cM_2$.}
 and Bertin \cite{Ber} if $\char(k)=2$; 
 \item $\Pic(\ov \cM_{2,0}^k)$ was computed by Larson \cite{Lar} if $\char(k)\neq 2,3$;
 \item $\Pic(\cM_{2,1}^k)$ was computed by Pernice \cite{Per} if  $\char(k)\neq 2$;
 \item $\Pic(\ov \cM_{2,1}^k)$ was computed by Di Lorenzo-Pernice-Vistoli \cite{diLPV} if  $\char(k)\neq 2,3$;
\item $\Pic(\cM_{g,0}^k)$ for $g=3,4,5$ were computed by Di Lorenzo \cite{diL}.
\end{itemize}

The above Theorem B was also known for the rational Picard group $\Pic(\Mgb^k)_{\bbQ}$ over an algebraically closed field $k$: the case $k=\bbC$ is easily deduced from the computation of $H^2(\Mgb^{\bbC}, \bbC)$ (and the vanishing of $H^1(\Mgb^{\bbC}, \bbC)$) by Arbarello-Cornalba \cite{AC98},  whose proof uses Hodge theory and the Harer's vanishing theorem \cite{Har86} for the homology of the mapping class group; Moriwaki \cite{Mor01} was able to extend the result to $\Pic(\Mgb^k)_{\bbQ}$ for an arbitrary  $k=\ov k$,  using the case $k=\bbC$ and \'etale cohomology. 

\vspace{0.1cm}
 
As a consequence of Theorem A, and using the well-known fact that the complex analytic first Chern class 
$$c_1^{\an}:\Pic(\Mgb^{\bbC}) \to H^2(\Mgb^{\an},\bbZ)$$
is an isomorphism (see Proposition \ref{P:c1an}), we show in Proposition \ref{P:c1et} that the \'etale first Chern class 
$$c_1^{et}:\Pic(\Mgb^{k})_{\bbZ_l} \to H_{et}^2(\Mgb^{k},\bbZ_l)$$
is an isomorphism for any algebraically closed field $k$ of characteristic different from two and any prime $l\neq \char(k)$.

Moreover, as a consequence of Theorem B, we compute in Proposition \ref{P:Cl} the integral divisor class group $\Cl(\ov M_{g,n}^k)$ of the coarse moduli space $\ov M_{g,n}^k$ of $\Mgb^k$ over an algebraically closed field $k$ of characteristic different from two, generalizing \cite[Prop. 2]{AC87} for $k=\bbC$ and $g\geq 3$.

\vspace{0.1cm}

%\Filippo{Ho accorciato un po' la spiegazione della dimostrazione. Ometto i passi che sono facili (e seguono "facilmente" da \cite{AC87}, \cite{AC98}, \cite{Mor01}), e invece evidenzio solo i nostri "nuovi" risultati ausiliari, e cioe'  la semplice connessione algebrica, la Weak Franchetta (che estendiamo al caso $g<3$), e l'estensione del teorema di Raynaud agli stacks.}

Let us comment on the strategy and the ingredients in the proof of Theorem A (and also of Theorem B). The proof has two main steps. 

%First of all, in order to prove that the tautological $S$-morphism $\Phi_{g,n}^S$ of \eqref{E:Phi} is an isomorphism (from which part  \eqref{T:mainThm2} of the Main Theorem follows easily), it is enough to check that $\Phi_{g,n}^S$ is an isomorphism on the geometric fibers, or in other words we are reduced to consider the case $S=\Spec k$, with $k$ an algebraically closed field. The proof of this result proceeds in two main steps. 

As a first step, we prove in Theorem \ref{T:Pic-taut}  that, if $k$ is an algebraically closed field, then $\Pic(\Mgb^k)$ is a finitely generated (abelian) group and it admits a splitting 
\begin{equation}\label{E:Picsplit}
\Pic(\Mgb^k)=\Pic^{\ta}(\Mgb^k)\oplus \Pic(\Mgb^k)_{\tor},
\end{equation}
where  the (so called) tautological subgroup  $\Pic^{\ta}(\Mgb^k)\subseteq \Pic(\Mgb^k)$, generated  by the tautological line bundles, is torsion-free and such that the relations among the tautological line bundles are generated by the relations \eqref{E:rela}; while  the torsion subgroup $\Pic(\Mgb^k)_{\tor}\subset \Pic(\Mgb^k)$ is a finite (abelian) $p$-group where $p=\char(k)$ (in particular it is trivial when $\char(k)=0$). 
Two crucial ingredients in the proof of \eqref{E:Picsplit} are the fact that   $\Mgb^k$ is algebraically simply connected (see Theorem \ref{T:simply-con}) and the weak Franchetta conjecture (see Theorem \ref{franchetta}, where we extend the proof of  Schr\"oer \cite{Sch03} to $g< 3$) that allows us also to prove that 
\begin{equation}\label{E:nontaut}
\Pic(\Mgb^k)_{\tor} \xrightarrow{\cong} \frac{\Pic(\Mg^k)}{\Pic^{\ta}(\Mg^k)} \xleftarrow{\cong}
 \begin{sis}
&  \frac{\Pic(\cM_g^k)}{\Pic^{\ta}(\cM_g^k)} \: \text{ for } g\geq 2,\\
&  \frac{\Pic(\cM_{1,1}^k)}{\Pic^{\ta}(\cM_{1,1}^k)} \: \text{ for } g=1,\\
\end{sis}
\end{equation}
where  $\Pic^{\ta}(\Mg^k)\subseteq \Pic(\Mg^k)$ is the subgroup generated by the tautological line bundles.

%The description of $\Pic^{\ta}(\Mgb^k)$ relies on the description of $\Pic(\Mgb^k)_{\bbQ}$ by \cite{AC98} and \cite{Mor01}, while the proof that $\Pic(\Mgb^k)$ is finitely generated and the description of $\Pic(\Mgb^k)_{\tor}$ use crucially that  $\Mgb^k$ is algebraically simply connected (see Theorem \ref{T:simply-con}). Finally, the proof of the decomposition \eqref{E:Picsplit} uses the Arbarello-Cornalba \cite{AC87} test curves for $n=0$ and $g\geq 3$, and the  Weak Franchetta conjecture describing the relative Picard group of the universal family $\pi: \mt C^k_{g,n}\to \mt M^k_{g,n}$ (see Theorem \ref{franchetta}, where we extend the proof of  Schr\"oer \cite{Sch03} to $g< 3$), that allows us to prove that 
%\begin{equation}\label{E:nontaut}
% \frac{\Pic(\Mgb^k)}{\Pic^{\ta}(\Mgb^k)}\xrightarrow{\cong} \frac{\Pic(\Mg^k)}{\Pic^{\ta}(\Mg^k)} \xleftarrow{\cong}
% \begin{sis}
%&  \frac{\Pic(\cM_g^k)}{\Pic^{\ta}(\cM_g^k)} \: \text{ for } g\geq 2,\\
%&  \frac{\Pic(\cM_{1,1}^k)}{\Pic^{\ta}(\cM_{1,1}^k)} \: \text{ for } g=1,\\
%\end{sis}
%\end{equation}
%where  $\Pic^{\ta}(\Mg^k)\subseteq \Pic(\Mg^k)$ is the subgroup generated by the tautological line bundles.

The second step of the proof is the vanishing of the torsion component of the Picard group scheme   (over an algebraically closed field $k$) 
\begin{equation}\label{E:vanPictau}
\PIC^{\tau}_{\Mgb^k/k}=0.
\end{equation}
If $\char(k)=0$, this follows from the  vanishing  $\PIC^{\tau}_{\Mgb^k/k}(k)=\Pic(\Mgb^k)_{\tor}=0$ (see Proposition \ref{P:tau-tors}). 
If $\char(k)=p>0$, we consider  the torsion component $\Pi_{W(k)}^{\tau}: \PIC^{\tau}_{\Mgb^{W(k)}/W(k)}\to \Spec W(k)$ of the Picard group space of $\Mgb^{W(k)}\to \Spec W(k)$, where $W(k)$ is the ring of Witt vectors with residue field $k$, which is a generically trivial group scheme by the above vanishing in characteristic zero. 
From a generalization to algebraic stacks of a result of Raynaud \cite{Ray79} (see Theorem \ref{T:main-Ray} of the Appendix), we deduce that $\Pi_{W(k)}^{\tau}$ is flat if $\char(k)=p>2$ and flat along the zero section  if $\char(k)=p=2$ (see Theorem \ref{T:flat}). This is enough to prove the vanishing \eqref{E:vanPictau} if  either $\char(k)>2$ (see Proposition \ref{P:Pictau}) or  $g\leq 5$ (in this last case we use \eqref{E:nontaut} and the knowledge of $\Pic(\cM_g^k)$ for $g\leq 5$).

\vspace{0.1cm}

The paper is organized as follows. In Section \ref{Sec:prel}, we review some basic properties of the stack $\Mg$ of smooth pointed curves and of the stack $\Mgb$ of stable pointed curves. 
In Section \ref{Sec:PicSpa}, we collect the properties of the Picard group space $\PIC_{\cX/S}$ of a cohomologically flat algebraic stack $f:\cX\to S$, following and expanding upon the work of Brochard \cite{Br1} and \cite{Br2}.
In Section \ref{S:homPhi}, we prove that the tautological homomorphism \eqref{E:Pgnbar}  is well-defined. In Section \ref{S:Picgrp}, we study the Picard group of $\Mgb^k$ over an algebraically closed field $k=\ov k$ and its tautological 
subgroup. In Section \ref{S:conclu}, we prove Theorem A and Theorem B. In Section \ref{S:cons}, we investigate the fist (analytic and \'etale) Chern class of $\Mgb^k$ and the divisor class group of its coarse moduli space $\ov M_{g,n}^k$. In the Appendix \ref{S:App}, we prove a flatness result for $\PIC^{\tau}_{\cX/R}\to \Spec R$, for an algebraic stack $\cX$  proper and cohomologically flat 
 over the spectrum of a discrete valuation ring $R$, extending a result of Raynaud \cite{Ray79} for schemes.    

\begin{convention}
Following \cite{LMB}, an algebraic stack will always be assumed quasi-separated, i.e. with quasi-compact and separated diagonal. In particular, all the schemes and algebraic spaces must be quasi-separated.
\end{convention}

\section{Preliminaries}\label{Sec:prel}

%\Filippo{Ho commentato ogni riferimento alle varieta' coarse $M_{g,n}$ e $\ov M_{g,n}$, che  non si usano mai nel prosieguo}\Roberto{ok}

\subsection{The stack $\Mg$ of  smooth pointed curves}

Given two integers $g,n\geq 0$, we will denote by $\Mg$ the stack  whose fiber over a scheme $S$ is the groupoid of families $(f:\cC\to S,\sigma_1,\ldots, \sigma_n)$ of \textbf{smooth $n$-pointed curves of genus $g$} over $S$, i.e. $\cC$ is an algebraic space, $f$ is a smooth and proper morphism, $\{\sigma_1,\ldots, \sigma_n\}$ are sections of $f$ that are fiberwise disjoint and for each geometric point $\ov s\to S$ the fiber $\cC_{\ov s}$ of $f$ over $\ov s$ is a (smooth and projective) connected curve of genus $g$. 

%\Roberto{Mi sembra che da qui...}Note that if $(g,n)\neq (1,0)$ then the morphism $f:\cC\to S$ is projective (and hence $\cC$ is a scheme) because either the dualizing sheaf $\omega_f$ is ample (which happens iff $g\geq 2$) or anitiample (which happens iff $g=0$) or the log dualing sheaf  $\omega_f^{\log}:=\omega_f(\sum \sigma_i)$ is ample (which happens iff $3g-3+n>0$). However, for $(g,n)=(1,0)$ the morphism $f:\cC\to S$ is not necessarily projective and $\cC$ is not necessarily a scheme, see the examples of \cite[XIII, 3.2]{Ray} or \cite{Zom}.\Roberto{... fino a qui si possa anche rimuovere. O la usiamo da qualche parte?}

It is well known that the stack $\Mg$ is algebraic, smooth, separated and of finite type over $\Spec \bbZ$, with geometric fibers that are connected (and hence integral) of dimension $3g-3+n$. 
Moreover, the stack $\Mg$ is a DM(=Deligne-Mumford) stack if and only if $3g-3+n>0$.

%Varieta' coarse 
% in which case it admits a coarse moduli space $M_{g,n}$ which is quasi-projective over $\Spec \bbZ$. In particular, the geometric fibers of $M_{g,n}\to \Spec \bbZ$ are integral quasi-projective varieties with finite quotient singularities and of dimension $3g-3+n$. 

Given a scheme $S$, we will denote by $\Mg^S\to S$ the base change of $\Mg\to \Spec \bbZ$ via the natural morphism $S\to \Spec \bbZ$. When $S=\Spec R$ for a ring $R$ (resp. $S=\Spec k$ for a field $k$), we will set   $\Mg^R:=\Mg^{\Spec R}$ (resp. $\Mg^k:=\Mg^{\Spec k}$). 
%Similarly, we define $M_{g,n}^S$, $M_{g,n}^R$ and $M_{g,n}^k$. 

The complex analytic stack associated to the complex algebraic stack $\Mg^{\bbC}$ will be denoted by $\Mg^{\an}$.
% and, when $3g-3+n>0$,  the complex analytic space associated to the complex algebraic variety $M_{g,n}^{\bbC}$ will be denote by $M_{g,n}^{\an}$. 

\subsection{The stack  $\Mgb$ of  stable pointed curves}

Given two integers $g,n\geq 0$ such that  $3g-3+n>0$ (we call such a pair $(g,n)$ \emph{hyperbolic}),  we will denote by $\Mgb$ the stack whose fiber over a scheme $S$ is the groupoid of families $(f:\cC\to S,\sigma_1,\ldots, \sigma_n)$ of \textbf{stable $n$-pointed curves of genus $g$} over $S$, i.e.  $f$ is a flat and proper morphism, $\{\sigma_1,\ldots, \sigma_n\}$ are sections of $f$ that are fiberwise disjoint, the log dualizing sheaf $\omega_f(\sum_{i=1}^n \sigma_i)$ is an  $f$-relatively ample line bundle and for each geometric point $\ov s\to S$ the fiber $\cC_{\ov s}$ of $f$ over $\ov s$ is a (projective) connected nodal curve of arithmetic genus $g$ such that the points $\{\sigma_1(\ov s), \ldots, \sigma_n(\ov s)\}$ are smooth points of $\cC_{\ov s}$.

The following result is well-known.

\begin{fact}\label{F:Mgnbar}\cite{DM} %Mum77, Gie82}
%\noindent 
%\begin{enumerate}
%\item 
The stack $\Mgb$ is a DM algebraic stack, smooth and proper over $\Spec \bbZ$, with geometric fibers that are connected (and hence integral) of dimension $3g-3+n$. 
In particular, the stack $\Mg$ is an open and dense substack of $\Mgb$. 
%\item The stack $\Mgb$ admits a coarse moduli space $\ov M_{g,n}$ which is projective over $\Spec \bbZ$. In particular, the geometric fibers of $\ov M_{g,n}\to \Spec \bbZ$ are integral projective varieties with finite quotient singularities and of dimension $3g-3+n$. 
%\end{enumerate}
\end{fact}

Given a scheme $S$, we will denote by $f_S:\Mgb^S\to S$ the base change of $\Mgb\to \Spec \bbZ$ via the natural morphism $S\to \Spec \bbZ$. When $S=\Spec R$ for a ring $R$ (resp. $S=\Spec k$ for a field $k$), we will set   $\Mgb^R:=\Mgb^{\Spec R}$ (resp. $\Mgb^k:=\Mgb^{\Spec k}$). 
%Similarly, we define $\ov M_{g,n}^S$, $\ov M_{g,n}^R$ and $\ov M_{g,n}^k$. 

%VANISHING di H^1 e H^2
%We will need the following result.

%\begin{teo}\label{T:vanH1H2}
%Let $k$ be a field of characteristic zero. Then we have that 
%$$H^1(\Mgb^k,\cO_{\Mgb^k})=H^2(\Mgb^k,\cO_{\Mgb^k})=0.$$
%\end{teo}
%\begin{proof}
%Since the cohomology groups in question commutes with field extensions, it is enough to prove the Theorem for $k=\bbC$. 
%In this case, there are two known proofs, both of which relies on Harer's vanishing theorem \cite{Har86}. 

%The first proof  follows from the Hodge decomposition 
%$$
%\begin{sis} 
%& H^1(\Mgb^{\bbC},\bbC)=H^1(\Mgb^{\bbC},\cO_{\Mgb^{\bbC}})\oplus H^0(\Mgb^{\bbC}, \Omega^1_{\Mgb^{\bbC}}),\\
%& H^2(\Mgb^{\bbC},\bbC)=H^2(\Mgb^{\bbC},\cO_{\Mgb^{\bbC}})\oplus H^1(\Mgb^{\bbC}, \Omega^1_{\Mgb^{\bbC}})\oplus H^0(\Mgb^{\bbC}, \Omega^2_{\Mgb^{\bbC}}),\\
%\end{sis} 
%$$
%together with the results of Arbarello-Cornalba \cite[Thm. 2.1, Thm. 2.2]{AC98} who proved that $H^1(\Mgb^{\bbC},\bbQ)=0$ and $H^2(\Mgb^{\bbC},\bbQ)$ is generated by algebraic classes, which then implies that 
%$H^2(\Mgb^{\bbC},\bbC)= H^1(\Mgb^{\bbC}, \Omega^1_{\Mgb^{\bbC}})$.

%The second proof is due to Okawa-Sano \cite[Thm. 3.2]{OS} and uses the Hodge symmetry 
%$$
%\begin{sis} 
%& \dim H^1(\Mgb^{\bbC},\cO_{\Mgb^{\bbC}})=\dim H^0(\Mgb^{\bbC}, \Omega^1_{\Mgb^{\bbC}}), \\
%& \dim H^2(\Mgb^{\bbC},\cO_{\Mgb^{\bbC}})= \dim H^0(\Mgb^{\bbC}, \Omega^2_{\Mgb^{\bbC}}).
%\end{sis}
%$$
%\end{proof}

The complex analytic stack (or complex analytic orbifold) associated to the complex algebraic stack $\Mgb^{\bbC}$ will be denoted by $\Mgb^{\an}$.
%and  the complex analytic space associated to the complex algebraic variety $\ov M_{g,n}^{\bbC}$ will be denote by $\ov M_{g,n}^{\an}$.  
By a slight abuse of notation, we will also denote by $\Mgb^{\an}$ the DM topological stack (in the sense of \cite[\S 14]{Noo}) naturally associated to $\Mgb^{\bbC}$ (see \cite[\S 20]{Noo}). 
%Gruppo fondamentale topologico 
%Recall that Noohi defines in \cite[\S 17]{Noo} the fundamental group $\pi_1(\cX,x)$ of a topological stack $\cX$ (with respect to a base point) and he shows in \cite[\S 18]{Noo} that $\pi_1(\cX,x)$ parametrizes (topological) covers of $\cX$.
%\footnote{under some mild condition on $\cX$, namely that $\cX$ is is locally path connected and semilocally $1$-connect.}.
We will need the following well-known result, a proof of which (using Teichm\"uller theory) can be found in \cite[Prop. 1.1]{BP}.

\begin{prop}\label{P:simplycon}
The topological stack $\Mgb^{\an}$ is (topologically) simply connected, i.e. any connected  (topological) cover of $\Mgb^{\an}$ is trivial\footnote{this is equivalent  to the fact that the fundamental group 
$\pi_1(\Mgb^{\an},x)$ of  the topological stack $\Mgb^{\an}$ (with respect to a base point $x$) is trivial, see  \cite[\S 17, \S 18]{Noo}}. 
\end{prop}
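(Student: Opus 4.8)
The plan is to compute the fundamental group $\pi_1(\Mgb^{\an})$ of the topological stack (in the sense of \cite{Noo}) and to show that it is trivial, which by the footnote is equivalent to the assertion. First I would recall the Teichm\"uller uniformization of the open part: $\Mg^{\an}$ is the quotient topological stack $[\cT_{g,n}/\Gamma_{g,n}]$, where $\cT_{g,n}$ denotes the Teichm\"uller space of $n$-pointed genus $g$ surfaces -- an open cell, hence contractible and simply connected -- and $\Gamma_{g,n}$ is the pure mapping class group, acting properly discontinuously. Since $\cT_{g,n}$ is simply connected and the $\Gamma_{g,n}$-action is properly discontinuous, $\cT_{g,n}\to [\cT_{g,n}/\Gamma_{g,n}]$ is the universal cover, whence $\pi_1(\Mg^{\an})\cong \Gamma_{g,n}$. (The action of $\Gamma_{g,n}$ need not be effective -- the elliptic involution for $g=1$, the hyperelliptic involution for $g=2$ -- but this contributes only to the generic automorphism gerbe of the stack and not to its fundamental group.)

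Next I would pass to the compactification. By Fact \ref{F:Mgnbar}, $\Mgb^{\an}$ is smooth and proper, $\Mg^{\an}$ is open and dense, and the boundary $\partial:=\Mgb^{\an}\setminus\Mg^{\an}$ is a normal crossings divisor whose irreducible components are $\Delta_{\irr}$ and the $\Delta_{a,A}$. The usual van Kampen argument for deleting a normal crossings divisor from a smooth connected Deligne--Mumford topological stack then shows that $\pi_1(\Mg^{\an})\twoheadrightarrow \pi_1(\Mgb^{\an})$, with kernel the normal subgroup generated by the meridian loops around the components of $\partial$. By Picard--Lefschetz applied to a one-nodal degeneration, the meridian around $\Delta_{\irr}$ is the Dehn twist along a \emph{non-separating} simple closed curve, while the meridian around $\Delta_{a,A}$ is the Dehn twist along the \emph{separating} curve that splits the surface (and its marked points) into a piece of type $(a,A)$ and a piece of type $(g-a,A^c)$. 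Thus $\pi_1(\Mgb^{\an})\cong \Gamma_{g,n}/N$, where $N$ is the normal subgroup generated by this collection of Dehn twists.

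It remains to show $N=\Gamma_{g,n}$. If $g\geq 1$, then $\Delta_{\irr}\neq\emptyset$; all non-separating simple closed curves are conjugate under $\Gamma_{g,n}$, so $N$ contains every Dehn twist along a non-separating curve; and by the classical theorem of Dehn, Lickorish and Humphries -- extended from closed to pointed surfaces via the Birman exact sequence, since point-pushing maps are products of non-separating twists once $g\geq 1$ -- these twists generate $\Gamma_{g,n}$. If $g=0$, then necessarily $n\geq 4$, so $\Delta_{0,\{i,j\}}$ is a boundary divisor for each pair $i\neq j$, and its meridian is the Dehn twist on the $n$-punctured sphere along a curve enclosing exactly the two marked points $i$ and $j$; such twists generate the pure mapping class group $\Gamma_{0,n}$ (e.g.\ they are the images of the standard generators $A_{ij}$ of the pure braid group). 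In both cases $N=\Gamma_{g,n}$, so $\pi_1(\Mgb^{\an})=1$.

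The ingredients just used -- contractibility of $\cT_{g,n}$, Picard--Lefschetz, and the generation of $\Gamma_{g,n}$ by (non-)separating Dehn twists -- are classical; the part that requires genuine care is the stacky bookkeeping: that $\pi_1$ of a topological Deligne--Mumford stack surjects, with the expected kernel, from $\pi_1$ of the complement of a normal crossings divisor; that the meridians are well defined as conjugacy classes despite the orbifold structure of the boundary strata; and that the non-effective part of the $\Gamma_{g,n}$-action is indeed harmless. All of this fits in Noohi's framework for topological stacks \cite{Noo}; alternatively, one may simply invoke the Teichm\"uller-theoretic proof of Boggi--Pikaart \cite[Prop. 1.1]{BP}. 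The smallest cases can also be checked by hand -- for instance $\ov\cM_{1,1}^{\an}$ yields $\mathrm{SL}_2(\bbZ)$ modulo the normal subgroup generated by $\left(\begin{smallmatrix}1&1\\0&1\end{smallmatrix}\right)$, which is trivial, and $\ov\cM_{0,4}^{\an}\cong\bbP^1$.
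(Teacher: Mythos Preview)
Your argument is correct and is precisely the Teichm\"uller-theoretic approach the paper has in mind: the paper does not give its own proof of this proposition but simply cites \cite[Prop.~1.1]{BP}, whose argument is the one you have sketched (uniformize $\Mg^{\an}$ by Teichm\"uller space so that $\pi_1\cong\Gamma_{g,n}$, then kill the Dehn twists coming as meridians of the boundary divisors and use that they generate the mapping class group). You have in fact supplied more detail than the paper does, including the separate treatment of $g=0$ and the explicit $\ov\cM_{1,1}$ check; the caveats you flag about stacky van Kampen and ineffective actions are the right ones, and they are handled in Noohi's framework as you indicate.
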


\section{The  Picard group space of an algebraic stack}\label{Sec:PicSpa}

The aim of this section (which we believe to be of independent interest) is to collect the properties of the Picard group (algebraic) space of an algebraic stack. Our main sources are the two papers of Brochard \cite{Br1} and \cite{Br2}, but we will also give some complements to the results proved in loc. cit.

If $\cX$ is an algebraic stack, we will denote by $\Pic(\cX)$ the abelian group of invertible sheaves (or line bundles) on $\cX$. 
Given a stack $f:\cX\to S$ over a scheme $S$, the \emph{relative Picard functor} of $\cX\to S$, denoted  by $P_{\cX/S}$, is the functor 
\begin{equation*}\label{E:funcP}
\begin{aligned}
P_{\cX/S}:\Sch_S & \longrightarrow \textbf{Ab}, \\
T & \mapsto \frac{\Pic(\cX_T)}{f_T^*\Pic(T)},
\end{aligned}
\end{equation*}
where $f_T:\cX_T:=\cX\times_S T \to T$ is the base change of $f$ via $T\to S$, and \textbf{Ab} is the category of abelian groups. 
We will denote by $\Pic_{\cX/S}$, and we call it the \emph{Picard sheaf} of $\cX\to S$, the fppf sheaf associated to $P_{\cX/S}$. If $S=\Spec R$ is an affine scheme, then we will set $\Pic_{\cX/R}:=\Pic_{\cX/\Spec R}$.  Note that if $S=\Spec k$ with $k$ an algebraically closed field, then  $\Pic_{\cX/k}(k)=\Pic(\cX)$ (see \cite[Ex. 9.2.3]{Kle05}).

S. Brochard proved in \cite{Br1} the following representability result on $\Pic_{\cX/S}$, generalizing the representability results of Grothendieck, Mumford and Artin (see 
%\cite[V]{FGA}, \cite[Exp. XII]{SGA6}, \cite{Art69}
\cite[Sec. 8.2, 8.3]{BLR}, \cite[Sec. 9.4]{Kle05} and the references therein).

\begin{teo}\label{T:repre-Pic}
If  $f:\cX\to S$ is proper and cohomologically flat in degree zero, i.e. $f$ is flat and the natural morphism $\cO_{S}\xrightarrow{\cong} f_*\cO_{\cX}$ is an isomorphism universally, then the Picard sheaf  $\Pic_{\cX/S}$ is represented by a commutative group algebraic space (i.e. a commutative group-like object in the category of algebraic spaces) locally of finite type over $S$, that we  denote by $\PIC_{\cX/S}$ and call it the \emph{Picard group (algebraic) space}. Moreover, $\PIC_{\cX/S}$ commutes with base change, i.e. for any morphism $T\to S$ we have that $\PIC_{\cX_T/T}:=\PIC_{\cX/S}\times_S T$. 
\end{teo}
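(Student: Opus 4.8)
The plan is to follow Artin's representability program, transposed from schemes to algebraic stacks. First I would observe that the hypothesis of cohomological flatness in degree zero is exactly what makes the relative Picard functor $P_{\cX/S}$ behave well: it guarantees that for any base change $T \to S$ the natural map $\cO_T \to (f_T)_*\cO_{\cX_T}$ remains an isomorphism, so that $\Gm$-torsors on $\cX_T$ descend correctly and the formation of $\Pic_{\cX/S}$ commutes with base change at the level of sheaves. The key structural input is the relation between line bundles on $\cX$ and the Picard stack $\mathscr{P}ic_{\cX/S}$, the $S$-stack whose $T$-points are line bundles on $\cX_T$; this is an algebraic stack (a gerbe-like object), and $\Pic_{\cX/S}$ is its fppf sheafification of isomorphism classes, i.e. the rigidification of $\mathscr{P}ic_{\cX/S}$ along its inertia $\Gm$. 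So the first real step is: show $\mathscr{P}ic_{\cX/S}$ is an algebraic stack, locally of finite presentation over $S$.

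For that algebraicity I would verify Artin's axioms for $\mathscr{P}ic_{\cX/S}$: (i) it is a stack for the fppf topology (descent for line bundles, which is standard); (ii) it is limit-preserving, which reduces by a Noetherian approximation argument to the case $S$ of finite type over $\bbZ$; (iii) it has a good deformation and obstruction theory — here the tangent space to deformations of a line bundle $L$ on a fiber is $H^1(\cX_s, \cO)$ and the obstruction lives in $H^2(\cX_s, \cO)$, all of which are coherent and behave well precisely because of cohomological flatness; (iv) effectivity of formal deformations, via Grothendieck's existence theorem in the stacky setting (available since $\cX \to S$ can be assumed proper in the cases of interest, or more generally by Brochard's extension); and (v) the openness of versality / Artin–Schlessinger conditions. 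Passing from $\mathscr{P}ic_{\cX/S}$ to $\PIC_{\cX/S}$ is then the rigidification step: rigidifying an algebraic stack along a flat subgroup of its inertia yields an algebraic stack, and here the quotient by the $\Gm$-inertia is in fact an algebraic space because the remaining inertia is trivial. The group structure (addition of line bundles, inversion) is evidently functorial and makes $\PIC_{\cX/S}$ a commutative group algebraic space, locally of finite type over $S$ since $\mathscr{P}ic$ is locally of finite presentation and $S$ is (locally) Noetherian in the reduction.

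The main obstacle I expect is twofold. First, the effectivity axiom (iv): one needs a version of Grothendieck's formal existence theorem for coherent sheaves — and in particular for line bundles — on a proper algebraic stack over a complete local Noetherian ring, which is more delicate than the scheme case because of the presence of non-trivial automorphisms and the need to control the algebraic space $\cC$ (or coarse space) underlying $\cX$; this is where Brochard's technical work really enters. Second, even granting that $\mathscr{P}ic_{\cX/S}$ is an algebraic stack, one must check that rigidifying along $\Gm$ lands in \emph{algebraic spaces} and not merely stacks — i.e. that the sheafification kills exactly the $\Gm$-gerbe structure — and that this is compatible with arbitrary base change $T \to S$ rather than just flat ones; the base-change compatibility follows once one knows the construction is compatible with the formation of $(f_T)_*\cO_{\cX_T}$, which is again where cohomological flatness is used. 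Everything else — descent, limit-preservation, the deformation theory computations, the group-object structure — is routine given the standard machinery. I would therefore organize the write-up as: (1) reduce to $S$ Noetherian; (2) prove $\mathscr{P}ic_{\cX/S}$ is algebraic and l.f.p. via Artin; (3) rigidify to get $\PIC_{\cX/S}$ as a group algebraic space; (4) check base-change compatibility using the universality of $\cO_S \xrightarrow{\cong} f_*\cO_{\cX}$.
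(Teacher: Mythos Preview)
Your proposal is a reasonable sketch of how one actually proves this representability result via Artin's axioms, and it matches in spirit what Brochard does in \cite{Br1}. However, the paper does not reprove the theorem at all: its proof consists entirely of citations---representability is quoted from \cite[Thm.~1.1, Prop.~2.3.3]{Br1}, the group-space structure is noted to follow because $\Pic_{\cX/S}$ is valued in abelian groups, locally of finite type is deduced as in \cite[Prop.~9.4.17]{Kle05}, and base change is \cite[Ex.~9.4.4]{Kle05}. So you are essentially unpacking the content of the cited references, whereas the paper treats the result as a black box from the literature.

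One point worth flagging in your sketch: the effectivity axiom (iv) in Artin's criterion genuinely requires properness of $f$ (for Grothendieck's existence theorem), and you correctly note this is where the hard work lies. The theorem as stated in the paper only assumes cohomological flatness in degree zero, but Brochard's \cite[Thm.~1.1]{Br1} does assume $f$ proper (and flat, finitely presented); the paper is implicitly working under those standing hypotheses throughout, since it only ever applies the result to $\Mgb^S\to S$. Your remark that properness ``can be assumed in the cases of interest'' is right, but in a self-contained write-up you would need to state it as an explicit hypothesis rather than a side comment.
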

\begin{proof}
The representability of $\Pic_{\cX/S}$ by an algebraic space follows by \cite[Thm. 1.1, Prop. 2.3.3]{Br1}. The fact that $\PIC_{\cX/S}$ is an algebraic commutative group space follows from the fact that its functor of points, i.e. $\Pic_{\cX/S}$, take values in the category $\AbGr$ of abelian groups.
The fact that $\PIC_{\cX/S}$ is locally of finite type over $S$ is proved as in \cite[Prop. 9.4.17]{Kle05}. The fact that $\PIC_{\cX/S}$ commutes with base change follows from \cite[Ex. 9.4.4]{Kle05}.
\end{proof}

%We now comment on the hypothesis of being cohomologically flat in degree zero. 
%\begin{rmk}\label{R:coomflat} 
%\item 
Note that if $f:\cX\to S$ is proper and flat with geometrically reduced and geometrically connected fibers, then $f$ is cohomologically flat in degree $0$ (see \cite[Ex. 9.3.11]{Kle05}). 

%\item If $f:X\to S$ is a morphism of schemes which is proper, flat and of finite presentation with $S$ reduced then the Picard sheaf $\Pic_{X/S}$ is represented by an algebraic space if and only if  the formation of $f_*\cO_X$ commutes with base change (which is a little weaker 
%\end{rmk}

Assume now that $\cX\to \Spec k$ is a proper and cohomologically flat stack over a field $k$. Then $\PIC_{\cX/k}$ is a commutative group scheme locally of finite type over $k$, as it follows from Theorem \ref{T:repre-Pic} and the fact that every group  algebraic space locally of finite type over a field is a group scheme by \cite[Lemma 4.2, p. 43]{Art69}. Hence we can call $\PIC_{\cX/k}$ the \emph{Picard group scheme}  of $\cX/k$. The smoothness and the dimension of $\PIC_{\cX/k}$ is governed by the first and second cohomology groups of the structure sheaf of $\cX$. 

\begin{prop}\label{P:smPic}
Let $\cX\to \Spec k$ be a proper and cohomologically flat stack over a field $k$. 
\begin{enumerate}[(i)]
\item \label{P:smPic1} We have 
$$\dim  \PIC_{\cX/k}\leq T_e  \PIC_{\cX/k}=H^1(\cX, \cO_{\cX}),$$
where $T_e  \PIC_{\cX/k}$ is the tangent space of  $\PIC_{\cX/k}$ at its identity element  $e$. 

Moreover, equality holds in the above inequality if and only if $\PIC_{\cX/k}$ is smooth.
\item \label{P:smPic2} If $\char(k)=0$ or $H^2(\cX, \cO_{\cX})=0$, then $\PIC_{\cX/k}$ is smooth.
\end{enumerate}
\end{prop}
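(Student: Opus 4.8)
The plan is to prove Proposition \ref{P:smPic} by combining the deformation theory of the Picard functor with the standard Cartier-type smoothness criterion in characteristic zero.

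\textbf{Part \eqref{P:smPic1}.} First I would identify the tangent space. Since $\PIC_{\cX/k}$ represents the fppf sheafification of $P_{\cX/k}$, its tangent space at the identity is $\ker\left(\PIC_{\cX/k}(k[\varepsilon]) \to \PIC_{\cX/k}(k)\right)$. Using that $\cX$ is cohomologically flat so that $H^0(\cX,\cO_{\cX_T}) = H^0(T,\cO_T)$ for all $T$, one checks (as in \cite[Prop. 9.4.17, Ex. 9.2.3]{Kle05}) that this kernel is naturally isomorphic to $H^1(\cX,\cO_{\cX})$: a line bundle on $\cX_{k[\varepsilon]}$ restricting trivially to $\cX$ corresponds, via the exponential sequence $0 \to \cO_{\cX} \xrightarrow{\cdot\varepsilon} \cO_{\cX_{k[\varepsilon]}}^* \to \cO_{\cX}^* \to 1$, to an element of $H^1(\cX,\cO_{\cX})$, and the cohomological flatness guarantees that no correction from $\Pic(k[\varepsilon]) = 0$ intervenes. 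Then the inequality $\dim \PIC_{\cX/k} \leq T_e\PIC_{\cX/k}$ is the general fact that the dimension of a scheme locally of finite type over a field is bounded above by the dimension of its Zariski tangent space at any point, and equality at the identity (a smooth point iff the scheme is smooth there, since group schemes are homogeneous) is equivalent to smoothness of the whole group scheme — here I would invoke that a group scheme locally of finite type over a field is smooth if and only if it is smooth (equivalently, reduced in the perfect-field case, but more robustly: regular) at the identity.

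\textbf{Part \eqref{P:smPic2}.} For the characteristic zero case, I would apply the Cartier theorem: every group scheme locally of finite type over a field of characteristic zero is smooth (reduced), see \cite[Lemma, p. 101]{Mum70} or \cite[VII\(_A\), Thm. 8.2]{SGA3}. For the case $H^2(\cX,\cO_{\cX}) = 0$ in arbitrary characteristic, I would run the standard obstruction-theory argument: to show $\PIC_{\cX/k}$ is smooth it suffices to verify the infinitesimal lifting criterion, i.e. that for every small extension $A' \twoheadrightarrow A$ of local Artinian $k$-algebras with kernel a one-dimensional ideal $I$ killed by the maximal ideal, the map $\PIC_{\cX/k}(A') \to \PIC_{\cX/k}(A)$ is surjective. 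Via the exponential sequence $0 \to \cO_{\cX}\otimes_k I \to \cO_{\cX_{A'}}^* \to \cO_{\cX_A}^* \to 1$ on $\cX_{A'}$, the obstruction to lifting a line bundle from $\cX_A$ to $\cX_{A'}$ lies in $H^2(\cX_A, \cO_{\cX}\otimes_k I) \cong H^2(\cX,\cO_{\cX})\otimes_k I$ — here one needs base change compatibility of cohomology of $\cO_{\cX}$, which follows from cohomological flatness (Theorem \ref{T:repre-Pic}) together with flat base change — which vanishes by hypothesis. Hence every such lift exists, so the criterion is met.

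\textbf{Main obstacle.} The genuinely delicate point is the identification $T_e\PIC_{\cX/k} = H^1(\cX,\cO_{\cX})$ and, relatedly, the cohomology-and-base-change statements used in the obstruction argument: one must be careful that $\cX$ is an algebraic \emph{stack} rather than a scheme, so the relevant cohomology is the one computed on the lisse-étale (or fppf) site, and the exponential sequence and its cohomology behave well only because cohomological flatness in degree zero is assumed. I would handle this by reducing, via a smooth presentation of $\cX$, to standard descent and base-change facts for sheaf cohomology on algebraic stacks (e.g. following the setup in \cite{Br1}), and by carefully using that cohomological flatness is preserved under the base changes $\Spec A' \to \Spec k$ at issue. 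Everything else — the dimension bound, the homogeneity reduction to the identity, and Cartier's theorem — is formal.
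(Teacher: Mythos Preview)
Your proposal is correct and follows essentially the same route as the paper: identify the tangent space with $H^1(\cX,\cO_{\cX})$ and the obstruction space with $H^2(\cX,\cO_{\cX})$ via deformation theory of line bundles, reduce smoothness to the identity by homogeneity, and invoke Cartier's theorem in characteristic zero. The only difference is packaging: where you spell out the exponential-sequence argument over $k[\varepsilon]$ and small Artinian extensions, the paper simply cites \cite[Thm.~3.1.3, Thm.~4.1.3]{Br1} for the tangent/obstruction identification (which in particular absorbs the stacky subtleties you flag as your main obstacle).
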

\begin{proof}
This follows from the following facts:
\begin{itemize}
\item  the tangent (resp. an obstruction) space to the deformation functor of a line bundle on $\cX$ is equal to $H^1(\cX, \cO_{\cX})$ (resp. $H^2(\cX, \cO_{\cX})=0$), see \cite[Thm. 3.1.3, Thm. 4.1.3]{Br1};
\item a group scheme $G$ locally of finite type over a field $k$ is smooth if and only if it is smooth at its identity element $e\in G(K)$;
\item if $\char(k)=0$, then every  group scheme $G$ locally of finite type over $k$ is smooth (Cartier's theorem).
\end{itemize}
\end{proof}
The Picard group scheme $\PIC_{\cX/k}$ (of a proper and cohomologically flat stack $\cX\to \Spec k$ over a field $k$) admits two notable group subschemes
\begin{equation}\label{E:Pico-tau}
\PIC^o_{\cX/k}\subset \PIC^{\tau}_{\cX/k}\subset \PIC_{\cX/k},
\end{equation}
where $\PIC^o_{\cX/k}$ is the \emph{connected component} of $\PIC_{\cX/k}$ containing the identity element  and $\PIC^{\tau}_{\cX/k}$ is the \emph{torsion component}  of $\PIC_{\cX/k}$ containing the identity element, i.e. 
$$\PIC^{\tau}_{\cX/k}=\bigcup_{n>0} \phi_n^{-1}(\PIC^{o}_{\cX/k}),$$
where $\phi_n:\PIC_{\cX/k}\to \PIC_{\cX/k}$ is the $n$-th power map homomorphism.  Note that:
\begin{itemize}
\item  $\PIC^o_{\cX/k}$ is an open and closed subscheme; it is geometrically irreducible and of finite type over $k$; and forming it  commutes with field extensions (see   \cite[Lemma 9.5.1]{Kle05}).
\item $\PIC^{\tau}_{\cX/k}$ is an open and closed subscheme of $\PIC_{\cX/k}$; it is the biggest group subscheme of $\PIC_{\cX/k}$ that is of finite type over $k$; and forming it  commutes with field extensions (see \cite[Lemma 9.6.9, Ex. 9.6.10]{Kle05} using \cite[Thm. 3.3.3(2)]{Br2}).

\end{itemize}

%VECCHIO TESTO
%the formation of $\PIC^o_{\cX/k}$ and of $\PIC^{\tau}_{\cX/k}$ commutes with field extensions, and that 
%$\PIC^o_{\cX/k}$ is geometrically irreducible and of finite type over $k$ (see \cite[Lemma 9.5.1, 9.6.9]{Kle05}). 

The geometric meaning of the above two group subschemes over algebraically closed fields is clarified by the following result.

\begin{prop}\label{P:geo-Pico}
Let $\cX\to \Spec k$ be a proper and cohomologically flat stack over an algebraically closed field $k$. Let $\cL$ be a line bundle on $\cX$ and let $[\cL]\in \PIC_{\cX/k}(k)$ be its class. 
\begin{enumerate}[(i)]
\item \label{P:geo-Pico1} $[\cL]\in \PIC^o_{\cX/k}(k)$ if and only if $\cL$  is algebraic equivalent to  the trivial line bundle $\cO_{\cX}$, i.e. there exists a connected scheme $T$ of finite type over $k$,
a line bundle $\bbL$ on $\cX\times_k T$ and two points $t_0,t_1\in T(k)$  such that $\bbL_{|\cX\times \{t_0\}}=\cL$ and  $\bbL_{|\cX\times \{t_1\}}=\cO_{\cX}$.
\item \label{P:geo-Pico2} $[\cL]\in \PIC^{\tau}_{\cX/k}(k)$ if and only if $\cL$  is numerically trivial, i.e. for any integral proper $k$-curve $C$ contained in $\cX$ we have that $\deg(\cL_{|C})=0$.  
\end{enumerate}
\end{prop}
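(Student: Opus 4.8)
The plan is to deduce both statements from standard facts about the Picard scheme, suitably transported from schemes to cohomologically flat proper algebraic stacks. First I would recall the general structural fact (valid for any commutative group scheme $G$ locally of finite type over a field $k$): a point $[\cL]\in G(k)$ lies in the connected component $G^o$ of the identity if and only if it lies in the image of $\phi_{t_0}$ for some connected finite-type $k$-scheme $T$, a $k$-point $t_0\in T(k)$, and a morphism $\phi:T\to G$ with $\phi(t_1)=e$ for some other $t_1\in T(k)$; this is because $G^o$ is an open and closed subgroup scheme, so the image of a connected $T$ meeting $e$ must land in $G^o$, and conversely $G^o$ itself (being of finite type, connected, hence geometrically connected since $k=\ov k$) serves as such a $T$. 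Applying this with $G=\PIC_{\cX/k}$, and unwinding the definition of $\PIC_{\cX/k}$ as the fppf-sheafification of the relative Picard functor (Theorem \ref{T:repre-Pic}), a morphism $T\to \PIC_{\cX/k}$ with $T$ of finite type corresponds, after an fppf cover $T'\to T$, to a line bundle on $\cX\times_k T'$; since $\cX$ is proper and cohomologically flat, one can descend this (using that $T'\to T$ is fppf and $\Pic$ satisfies descent, or simply re-choosing $T$ to be $\PIC^o_{\cX/k}$ itself with its universal family, which exists fppf-locally but can be rigidified after passing to a suitable cover). The cleanest route for part \eqref{P:geo-Pico1} is: take $T=\PIC^o_{\cX/k}$, pull back the universal bundle along an fppf cover trivializing it, and over $k=\ov k$ observe that the family so obtained realizes $\cL$ and $\cO_\cX$ as fibers over $k$-points of a connected base — this gives algebraic equivalence; the converse is immediate since a connected base forces the classifying map into $\PIC^o_{\cX/k}$.

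For part \eqref{P:geo-Pico2}, the plan is to combine \eqref{P:geo-Pico1} with the definition $\PIC^{\tau}_{\cX/k}=\bigcup_{n>0}\phi_n^{-1}(\PIC^o_{\cX/k})$. If $[\cL]\in\PIC^{\tau}_{\cX/k}(k)$, then $[\cL^{\otimes n}]\in\PIC^o_{\cX/k}(k)$ for some $n>0$, so $\cL^{\otimes n}$ is algebraically equivalent to $\cO_\cX$; since degree on a proper curve $C\subseteq\cX$ is invariant under algebraic equivalence (the degree function $t\mapsto \deg(\bbL_t|_C)$ is locally constant on a connected base — this is the basic numerical-invariance of Euler characteristics in flat families, Riemann–Roch on $C$), we get $n\deg(\cL|_C)=0$, hence $\deg(\cL|_C)=0$. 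Conversely, suppose $\cL$ is numerically trivial. Here I would invoke the finiteness of $\PIC_{\cX/k}/\PIC^{\tau}_{\cX/k}$, i.e. the Néron–Severi group of $\cX$ is finitely generated — this should follow from the fact, recorded earlier, that $\PIC^o_{\cX/k}$ is of finite type and that $\PIC_{\cX/k}$ is locally of finite type, together with the theorem of the base in this generality (for stacks this reduces to the case of a proper scheme presentation, or can be cited from Brochard). The pairing of $\NS(\cX)$ with proper curves in $\cX$ is then a pairing of a finitely generated abelian group whose kernel one wants to identify with the torsion; concretely, $[\cL]\in\NS(\cX)$ numerically trivial means it pairs to zero with every curve class, and one needs that this kernel is exactly the torsion subgroup of $\NS(\cX)$, i.e. $\PIC^{\tau}/\PIC^o$.

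The main obstacle I anticipate is precisely this last point in \eqref{P:geo-Pico2}: showing that numerical triviality forces $[\cL]$ to be torsion modulo $\PIC^o$, rather than merely pairing trivially with curves. For schemes this is classical (Matsusaka, Kleiman — see \cite[\S 9.6]{Kle05}), and the argument uses that $\PIC/\PIC^\tau$ injects into a suitable cohomology or the Néron–Severi lattice modulo torsion, on which the intersection pairing with curves is non-degenerate. To make this work for the stack $\cX=\Mgb^k$ one can pass to a projective scheme dominating $\cX$, or more robustly use a Chow/cohomological realization $\NS(\cX)_\bbQ\hookrightarrow H^2$; I would phrase the argument so that it only uses a smooth proper scheme $Y$ with a surjection $Y\to\cX$ (available since $\Mgb$ admits finite covers by smooth projective schemes) and the compatibility of $\PIC^o$ and $\PIC^\tau$ with such pullbacks, reducing the assertion to the known scheme case on $Y$. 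The remaining steps — descent of the universal bundle after an fppf base change, and constancy of fiberwise degree — are routine and I would not belabor them.
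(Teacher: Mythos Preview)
Your proposal is correct and follows essentially the same route as the paper: part \eqref{P:geo-Pico1} is the standard group-scheme argument (the paper simply cites Kleiman \cite[Prop.~9.5.10]{Kle05} and Brochard \cite[Thm.~4.2.3]{Br1}), and for the converse in \eqref{P:geo-Pico2} both you and the paper reduce to the projective-scheme case via a surjection $f:X\to\cX$ together with the key compatibility $(f^*)^{-1}(\PIC^{\tau}_{X/k})=\PIC^{\tau}_{\cX/k}$, which the paper cites from \cite[Lemma~3.3.2]{Br2}. The only refinement worth noting is that the paper obtains $X$ from Chow's lemma as a \emph{projective} (not necessarily smooth) scheme over an arbitrary proper $\cX$, so there is no need to specialize to $\Mgb^k$ or to argue through non-degeneracy of the N\'eron--Severi pairing; once you know $f^*\cL$ is numerically trivial on $X$ (projection formula), the scheme case gives $f^*[\cL]\in\PIC^{\tau}_{X/k}$ and Brochard's lemma finishes.
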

\begin{proof}
Part \eqref{P:geo-Pico1} can be proved as in \cite[Prop. 9.5.10]{Kle05} (as observed in \cite[Thm. 4.2.3]{Br1}).

Part \eqref{P:geo-Pico2}: the case when $\cX$ is a projective scheme over $k$ is well known (see \cite[Thm. 9.6.3, Ex. 9.6.11]{Kle05}). Let us now prove the general case. 

If $[\cL]\in \PIC^{\tau}_{\cX/k}(k)$ then, by definition of $\PIC^{\tau}_{\cX/k}$ and part \eqref{P:geo-Pico1}, we have that $\cL^n$ is algebraically equivalent to $\cO_{\cX}$ for some $n\geq 1$. 
This implies that, for any integral proper $k$-curve $C\subset \cX$, $\cL_{|C}^n$ is algebraically equivalent to $\cO_{C}$, which in turn implies that $\deg(\cL_{|C})=0$ since the degree of line bundles on a proper integral curve is invariant under algebraic equivalence. 

Conversely, assume that $\cL$ is numerically trivial. By Chow's lemma for stacks (see \cite[Thm. 1.1]{Ols05}), there exists a projective $k$-scheme $X$ endowed with a surjective (proper) morphism $X\to \cX$. By the projection formula, we have that $f^*(\cL)$ is numerically trivial on $C$. Since the statement we want to prove is true for $\PIC_{X/k}$ (as recalled above), we deduce that $[f^*(\cL)]=f^*([\cL])\in \PIC^{\tau}_{X/k}(k)$ where $f^*:\PIC_{\cX/k}\to \PIC_{X/k}$ is the morphism induced by the pull-back.  Since $(f^*)^{-1}(\PIC^{\tau}_{X/k})=\PIC^{\tau}_{\cX/k}$ by \cite[Lemma 3.3.2]{Br2}, we conclude that $[\cL]\in \PIC^{\tau}_{\cX/k}(k)$. 
\end{proof}

The connected and torsion components of the Picard group scheme of $\cX\to \Spec k$ are quasi-projective, and projective if $\cX$ is geometrically normal.

\begin{prop}\label{P:Pico-tau}
Let $\cX\to \Spec k$ be a proper and cohomologically flat stack over a field $k$. 
\begin{enumerate}[(i)]
\item \label{P:Pico-tau1} The schemes $\PIC^o_{\cX/k}$ and $\PIC^{\tau}_{\cX/k}$ are quasi-projective  over $k$. 
\item \label{P:Pico-tau2} If $\cX$ is  geometrically normal, then $\PIC^o_{\cX/k}$ and $\PIC^{\tau}_{\cX/k}$ are projective schemes over $k$. 
\end{enumerate}
\end{prop}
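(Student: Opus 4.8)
The statement to prove is Proposition \ref{P:Pico-tau}: for a cohomologically flat stack $\cX\to\Spec k$ that is proper over $k$, the group schemes $\PIC^o_{\cX/k}$ and $\PIC^\tau_{\cX/k}$ are quasi-projective, and projective when $\cX$ is moreover geometrically normal.

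The plan is to reduce to the classical case of projective schemes via Chow's lemma, just as was done in the proof of Proposition \ref{P:geo-Pico}\eqref{P:geo-Pico2}. First I would recall that for a projective $k$-scheme $X$, it is a theorem of Grothendieck (see \cite[Thm. 9.5.3, Cor. 9.6.17]{Kle05}) that $\PIC^o_{X/k}$ and $\PIC^\tau_{X/k}$ are quasi-projective over $k$, and that they are projective (indeed $\PIC^\tau$ is proper) once $X$ is in addition geometrically normal, since then algebraic and numerical equivalence behave well and one has the boundedness needed for properness. So the task is to transfer these properties along a Chow cover $p\colon X\to\cX$, where $X$ is a projective $k$-scheme surjecting properly onto $\cX$ (which we may and do take to be integral/normal when $\cX$ is, by replacing $X$ with a suitable cover).

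The key tool is the pull-back homomorphism $p^*\colon\PIC_{\cX/k}\to\PIC_{X/k}$ and the fact, established by Brochard \cite[Lemma 3.3.2]{Br2} and already invoked above, that $(p^*)^{-1}(\PIC^\tau_{X/k})=\PIC^\tau_{\cX/k}$, and similarly (by Proposition \ref{P:geo-Pico}\eqref{P:geo-Pico1} together with the analogous statement for $X$) that $(p^*)^{-1}(\PIC^o_{X/k})=\PIC^o_{\cX/k}$. The essential extra input needed is that $p^*$ has \emph{finite kernel}: the kernel of $\Pic(\cX_T)\to\Pic(X_T)$ for varying $T$ is controlled by $H^0(X_T,\cO)/\cdots$ type data, and more precisely, since $p$ is surjective and proper with $\cX$ cohomologically flat, $p^*$ is a monomorphism of group schemes up to a kernel that is a finite group scheme (this kind of statement is in \cite{Br2}, cf. the descent of line bundles along $p$). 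Granting that $p^*$ restricted to $\PIC^\tau_{\cX/k}$ is a homomorphism with finite kernel and with image a locally closed — in fact closed, on the torsion component — subgroup scheme of $\PIC^\tau_{X/k}$, we conclude: $\PIC^\tau_{\cX/k}$ is an extension of a subgroup scheme of a quasi-projective (resp. projective) scheme by a finite group scheme, hence is itself quasi-projective (resp. projective). The same argument applies verbatim to $\PIC^o$, using that $p^*$ maps $\PIC^o_{\cX/k}$ to $\PIC^o_{X/k}$, which is quasi-projective; for part \eqref{P:Pico-tau2} one notes that $\PIC^o$ and $\PIC^\tau$ of a proper normal scheme are projective, and properness is inherited through a finite-kernel homomorphism with closed image.

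The main obstacle I anticipate is the finiteness of the kernel of $p^*$ and the closedness of its image on the torsion/connected components — i.e., showing $p^*$ is, up to finite kernel, a closed immersion on $\PIC^\tau_{\cX/k}$. In the scheme case one uses that $\PIC^\tau$ is proper (for normal $X$) so any homomorphism from it has closed image; for quasi-projectivity alone one needs instead that $p^*|_{\PIC^\tau_{\cX/k}}$ is quasi-affine or that its image is quasi-projective, which follows once the kernel is finite and one knows $\PIC^\tau_{\cX/k}$ is of finite type (true: $\PIC^o$ is of finite type by \cite[Lemma 9.6.9]{Kle05} and $\PIC^\tau/\PIC^o$ is finite). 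The finiteness of $\ker p^*$ should come from the fact that a line bundle on $\cX$ pulling back to $\cO_X$ is, by faithfully flat (or proper surjective cohomologically trivial) descent, determined by gluing data living in $H^0$ of the fibred products $X\times_\cX X$, which is a finite-dimensional vector space, forcing the kernel to be an affine group scheme of finite type whose points are torsion — hence finite; alternatively one cites the relevant statement directly from Brochard \cite{Br1,Br2}. Once this is in place the rest is formal.
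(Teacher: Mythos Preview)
Your approach differs substantially from the paper's. The paper's proof is essentially a list of citations to Brochard: quasi-projectivity of $\PIC^o_{\cX/k}$ is \cite[Cor.~2.3.7]{Br2}, finite-typeness of $\PIC^{\tau}_{\cX/k}$ is \cite[Thm.~3.3.3(2)]{Br2}, and properness of $\PIC^o_{\cX/k}$ in the geometrically normal case is \cite[Thm.~4.3.1]{Br1}. Quasi-projectivity (resp.\ projectivity) of $\PIC^{\tau}_{\cX/k}$ then follows formally since $\PIC^{\tau}_{\cX/k}$ is a finite disjoint union of translates of $\PIC^o_{\cX/k}$. There is no Chow-lemma reduction in the paper itself; that work is outsourced to Brochard.

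Your route via Chow's lemma is morally what underlies Brochard's arguments, but as you yourself flag, the crux is the finiteness (or at least affineness) of $\ker(p^*)$, and your justification is incomplete. The heuristic ``gluing data lives in a finite-dimensional $H^0$'' presupposes faithfully flat descent, but a Chow cover $p\colon X\to\cX$ is proper and surjective, not flat in general, so the standard descent sequence does not apply directly. Your fallback ``alternatively one cites Brochard'' effectively collapses your argument back into the paper's: if you are willing to cite Brochard for the kernel statement, you may as well cite him for the full quasi-projectivity result. Also, your claim $(p^*)^{-1}(\PIC^o_{X/k})=\PIC^o_{\cX/k}$ is stronger than what you need or what follows from Proposition~\ref{P:geo-Pico}; only the inclusion $p^*(\PIC^o_{\cX/k})\subseteq\PIC^o_{X/k}$ is automatic, and that suffices for your purposes.

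In short: your strategy is plausible and could be made to work, but the gap you identify is genuine and not closed by the sketch you give. The paper avoids the issue entirely by invoking Brochard's theorems as black boxes.
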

\begin{proof}
The scheme $\PIC^o_{\cX/k}$ is quasi-projective over $k$ by \cite[Cor. 2.3.7]{Br2}. If, furthermore, $\cX$ is geometrically normal, then  $\PIC^o_{\cX/k}$ is proper over $k$ by \cite[Thm. 4.3.1]{Br1},,
and hence projective over $k$. This proves the result for $\PIC^o_{\cX/k}$. 

The analogue result for $\PIC^{\tau}_{\cX/k}$ follows since $\PIC^o_{\cX/k}$ is the connected component of $\PIC^{\tau}_{\cX/k}$ containing the identity and 
$\PIC^{\tau}_{\cX/k}$ is of finite type by \cite[Thm. 3.3.3(2)]{Br2} and hence it has finitely many connected components. 
%OLD PROOF
%which proves part \eqref{P:Pico-tau1}.
%If $\cX$ is proper over $k$ and geometrically normal then $\PIC^o_{\cX/k}$ is proper over $k$ by \cite[Thm. 4.3.1]{Br1}, which together with part \eqref{P:Pico-tau1}, proves part \eqref{P:Pico-tau2}.  
\end{proof}

%Note that, under the assumption of the previous Proposition, $\PIC^{\tau}_{\cX/k}$ is the biggest group subscheme of $\PIC_{\cX/k}$ that is of finite type over $k$ (see \cite[Ex. 9.6.10]{Kle05}).

The quotients of the Picard group scheme $\PIC_{\cX/k}$ (of a proper and cohomologically flat stack $\cX\to \Spec k$ over a field $k$) by the two subgroups of \eqref{E:Pico-tau}
\begin{equation}\label{E:NS-quot}
\NSb_{\cX/k}:=\frac{\PIC_{\cX/k}}{\PIC^o_{\cX/k}}\twoheadrightarrow \NSb^{\nu}_{\cX/k}:=\frac{\PIC_{\cX/k}}{\PIC^{\tau}_{\cX/k}}
\end{equation}
are commutative group schemes \'etale and locally of finite type over $k$, which are called, respectively,  the \emph{Neron-Severi}  and \emph{numerical Neron-Severi}  group schemes of $\cX/k$.
Note that the formation of $\NSb_{\cX/k}$ and $\NSb^{\nu}_{\cX/k}$ commutes with field extensions. 

If $\cX$ is a proper and cohomologically flat stack over an algebraically closed field $k=\ov k$  then we set 
$$
\begin{sis}
\Pic^o(\cX):=\PIC^o_{\cX/k}(k) \:  & \text{ and } \: \Pic^{\tau}(\cX):=\PIC^{\tau}_{\cX/k}(k), \\
\NS(\cX):=\NSb_{\cX/k}(k)=\frac{\Pic(\cX)}{\Pic^o(\cX)} \: & \text{ and } \NS^{\nu}(\cX):=\NSb^{\nu}_{\cX/k}(k)=\frac{\Pic(\cX)}{\Pic^{\tau}(\cX)}.
\end{sis}
$$

\begin{prop}\label{P:finit-NS}
Let $\cX\to \Spec k$ be a proper and cohomologically flat stack over an algebraically closed field $k$. 
Then $\NS(\cX)$  is a finitely generated abelian group such that its torsion subgroup is equal to 
\begin{equation}\label{E:torNS}
\NS(\cX)_{\tor}=\frac{\Pic^{\tau}(\cX)}{\Pic^o(\cX)},
\end{equation}
and its torsion-free quotient is equal to 
\begin{equation}\label{E:tfNS}
\NS(\cX)_{\tf}=\NS^{\nu}(\cX). 
\end{equation}
%fitting in a short exact sequence of abelian groups 
%\begin{equation}\label{E:seqNS}
%0\to \frac{\PIC^{\tau}_{\cX/k}(\ov k)}{\PIC^o_{\cX/k}(\ov k)}=\NSb_{\cX/k}(\ov k)_{\tor} \to \NSb_{\cX/k}(\ov k) \to \NSb_{\cX/k}(\ov k)_{\tf}=\NSb^{\nu}_{\cX/k}(\ov k) \to 0,
%\end{equation}
\end{prop}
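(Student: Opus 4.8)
The plan is to pass to $k$-points in the exact sequence of commutative group schemes over $k$
$$
0\longrightarrow \PIC^{\tau}_{\cX/k}/\PIC^{o}_{\cX/k}\longrightarrow \NSb_{\cX/k}\longrightarrow \NSb^{\nu}_{\cX/k}\longrightarrow 0
$$
deduced from \eqref{E:Pico-tau} and \eqref{E:NS-quot} (the first term being the kernel of $\NSb_{\cX/k}\to \NSb^{\nu}_{\cX/k}$). All three group schemes are \'etale and locally of finite type over $k$: the latter two by \eqref{E:NS-quot}, and the first because it is a closed subgroup scheme of $\NSb_{\cX/k}$. Since $k=\ov k$, taking $k$-points of a short exact sequence of such group schemes is exact, so --- using the identifications of functors of points recalled just before the statement --- one obtains a short exact sequence of abelian groups
$$
0\longrightarrow \Pic^{\tau}(\cX)/\Pic^{o}(\cX)\longrightarrow \NS(\cX)\longrightarrow \NS^{\nu}(\cX)\longrightarrow 0 .
$$
Thus it is enough to prove: \textbf{(a)} $\Pic^{\tau}(\cX)/\Pic^{o}(\cX)$ is a finite group; \textbf{(b)} $\NS^{\nu}(\cX)$ is a finitely generated free abelian group.

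For \textbf{(a)}: by Proposition \ref{P:Pico-tau}\eqref{P:Pico-tau1}, $\PIC^{\tau}_{\cX/k}$ is of finite type over $k$, hence $\PIC^{\tau}_{\cX/k}/\PIC^{o}_{\cX/k}$ is a finite \'etale group scheme over $k=\ov k$; as each of its (finitely many) components is a nonempty $k$-variety, it has a $k$-point in every component. Taking $k$-points in $0\to \PIC^{o}_{\cX/k}\to \PIC^{\tau}_{\cX/k}\to \PIC^{\tau}_{\cX/k}/\PIC^{o}_{\cX/k}\to 0$ then identifies $\Pic^{\tau}(\cX)/\Pic^{o}(\cX)$ with this finite group.

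For \textbf{(b)}: torsion-freeness is formal. By the Remark following Proposition \ref{P:Pico-tau}, $\PIC^{\tau}_{\cX/k}$ is the biggest subgroup scheme of $\PIC_{\cX/k}$ of finite type over $k$; a nonzero torsion class in $\NS^{\nu}(\cX)=\NSb^{\nu}_{\cX/k}(k)$ would generate a nontrivial finite subgroup scheme of $\NSb^{\nu}_{\cX/k}$ whose preimage in $\PIC_{\cX/k}$ is an extension of a finite group scheme by $\PIC^{\tau}_{\cX/k}$, hence of finite type over $k$ and strictly larger than $\PIC^{\tau}_{\cX/k}$ --- a contradiction. For finite generation, I would use Chow's lemma to choose a projective $k$-scheme $X$ together with a surjective proper morphism $\pi:X\to \cX$, and consider the pull-back $\pi^{*}:\PIC_{\cX/k}\to \PIC_{X/k}$. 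By \cite[Lemma 3.3.2]{Br2} one has $(\pi^{*})^{-1}(\PIC^{\tau}_{X/k})=\PIC^{\tau}_{\cX/k}$, so $\pi^{*}$ induces an injective homomorphism $\NS^{\nu}(\cX)\hookrightarrow \NS^{\nu}(X)$. Since $X$ is a projective scheme over the field $k$, the classical theorem of the base (see e.g. \cite[\S 9.6]{Kle05}) gives that $\NS^{\nu}(X)$ is free of finite rank; hence so is its subgroup $\NS^{\nu}(\cX)$.

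Granting \textbf{(a)} and \textbf{(b)}: the displayed short exact sequence presents $\NS(\cX)$ as an extension of a finitely generated free group by a finite group, so $\NS(\cX)$ is finitely generated. Moreover $\NS^{\nu}(\cX)$ is torsion-free, so the torsion subgroup of $\NS(\cX)$ maps to $0$ in $\NS^{\nu}(\cX)$ and is therefore contained in the finite (in particular torsion) group $\Pic^{\tau}(\cX)/\Pic^{o}(\cX)$; this proves \eqref{E:torNS}, and then $\NS(\cX)_{\tf}=\NS(\cX)/\bigl(\Pic^{\tau}(\cX)/\Pic^{o}(\cX)\bigr)=\NS^{\nu}(\cX)$, which is \eqref{E:tfNS}. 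The step I expect to be the main obstacle is the finite generation of $\NS^{\nu}(\cX)$ in \textbf{(b)}: torsion-freeness follows formally from the maximality of $\PIC^{\tau}_{\cX/k}$, but bounding its rank requires a genuine finiteness input, here obtained --- via Chow's lemma --- by reducing to the theorem of the base for the auxiliary projective scheme $X$.
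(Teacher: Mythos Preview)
Your proof is correct. The paper's own proof is much terser: it simply cites \cite[Thm.~3.4.1]{Br2} for the finite generation of $\NS(\cX)$, and says that the identifications \eqref{E:torNS} and \eqref{E:tfNS} follow immediately from the definition of $\PIC^{\tau}_{\cX/k}$ (indeed, $\PIC^{\tau}_{\cX/k}=\bigcup_{n>0}\phi_n^{-1}(\PIC^{o}_{\cX/k})$ forces $\NS^{\nu}(\cX)$ to be torsion-free and identifies $\Pic^{\tau}(\cX)/\Pic^{o}(\cX)$ with $\NS(\cX)_{\tor}$ directly).

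Your route is more self-contained: you unpack both steps rather than citing them. For the torsion identifications you argue via the maximality of $\PIC^{\tau}_{\cX/k}$ among finite-type subgroup schemes, which is a valid (if slightly more elaborate) alternative to the direct definitional argument the paper has in mind. For finite generation you reduce via Chow's lemma and \cite[Lemma~3.3.2]{Br2} to the theorem of the base for a projective scheme $X$; this is precisely the mechanism underlying Brochard's \cite[Thm.~3.4.1]{Br2}, so in substance you are re-deriving the cited result rather than taking a genuinely different path. The upshot is that your argument and the paper's agree in spirit; yours makes the reduction to the scheme case explicit, while the paper outsources it to the literature.
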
 
\begin{proof}
The fact that $\NS(\cX)=\NSb_{\cX/k}(k)$ is a finitely generated abelian group follows from \cite[Thm. 3.4.1]{Br2}. The other assertions follows immediately from the definition of $\PIC^{\tau}_{\cX/k}$.
\end{proof}

Now, going back to the general case of a proper and cohomologically flat stack $\cX\to S$ over a general scheme $S$, we can consider the following two locally constructible subsets of $\PIC_{\cX/S}$ (see \cite[VI, Thm. 1.1]{FGA})
\begin{equation}\label{E:Pico-tauS}
\PIC^o_{\cX/S}:=\bigcup_{s\in S} \PIC^o_{\cX_s/k(s)}\subset \PIC^{\tau}_{\cX/S}:=\bigcup_{s\in S} \PIC^{\tau}_{\cX_s/k(s)}\subset \PIC_{\cX/k}.
\end{equation}
While the openess of $\PIC^o_{\cX/S}$ is a delicate issue (see \cite[Prop. 9.5.20]{Kle05} and \cite[Prop. 4.2.10]{Br1} for some criteria and \cite[Ex. 3.3.4]{Br2} for a counterexample), it turns out that $\PIC^{\tau}_{\cX/S}$ is open if $f$ is  finitely presented and it has many nice properties, as we now show.

\begin{teo}\label{T:Pictau}
Let $f: \cX\to S$ be a proper and cohomologically flat stack over a scheme $S$ and assume that  $f$ is finitely presented. 
\begin{enumerate}[(i)]
\item \label{T:Pictau1} 
$\PIC^{\tau}_{\cX/S}$  is an open group (algebraic) subspace of  $\PIC_{\cX/S}$, which is of finite type over $S$ and whose formation commutes with base change. 
\item \label{T:Pictau2} Let $s$ be a point of $S$ such that $H^2(\cX_s,\cO_{\cX_s})=0$. Then there exists an open neighboorhood of $s$ in $S$ over which $\PIC_{\cX/S}$ (and hence also $\PIC^{\tau}_{\cX/S}$) is smooth.  
\item \label{T:Pictau3} If the fibers of $f$ are geometrically normal then 
\begin{enumerate}[(a)]
\item \label{T:Pictau3a} $\PIC_{\cX/S}$ (and hence also $\PIC^{\tau}_{\cX/S}$) is separated over $S$.
\item  \label{T:Pictau3b} $\PIC_{\cX/S}$ (and hence also  $\PIC^{\tau}_{\cX/S}$) is equidimensional over $S$.
\item  \label{T:Pictau3c}  $\PIC^{\tau}_{\cX/S}$ is closed in $\PIC_{\cX/S}$.
\end{enumerate}
\item \label{T:Pictau4} If the fibers of $f$ are smooth (or, equivalently, if $f$ is smooth), then 
\begin{enumerate}
\item  \label{T:Pictau4a} $\PIC_{\cX/S}\to S$ satisfies the valuative criterion of properness.
\item \label{T:Pictau4b} $\PIC^{\tau}_{\cX/S}$ is proper over $S$. 
\end{enumerate}
\item \label{T:Pictau5} If $S$ is Noetherian, then 
\begin{itemize}
\item the number of geometric connected components of the fibers of $\PIC^{\tau}_{\cX/S}\to S$
%which is equal to the cardinality of the group (for a given $s\in S$)
%$$\frac{\PIC^{\tau}_{\cX/S}(\ov{k(s)})}{\PIC^o_{\cX_s/k(s)}(\ov{k(s)})},$$
 is bounded.
\item the ranks of the free abelian groups $\NS^{\nu}(\cX_{\ov{s}})$ (as $s$ varies among the points of $S$) are bounded.
\end{itemize}
\end{enumerate}
\end{teo}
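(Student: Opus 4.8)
The plan is to reduce every assertion to the corresponding classical statement about relative Picard schemes of projective $S$-schemes, combined with the fibrewise (field-theoretic) facts already recorded above. Fix, by Chow's lemma for algebraic stacks (applicable since $f$ is proper and finitely presented), a surjective proper morphism $p\colon X\to\cX$ with $X$ a projective $S$-scheme. Pull-back of line bundles induces a homomorphism $p^{*}\colon\PIC_{\cX/S}\to\PIC_{X/S}$ of group algebraic spaces over $S$ which is continuous, commutes with base change, and --- by Proposition~\ref{P:geo-Pico} applied on geometric fibres together with the relative version of \cite[Lemma~3.3.2]{Br2} --- satisfies $(p^{*})^{-1}\bigl(\PIC^{\tau}_{X/S}\bigr)=\PIC^{\tau}_{\cX/S}$. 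Hence any topological property of $\PIC^{\tau}_{X/S}$ inside $\PIC_{X/S}$ transfers to $\PIC^{\tau}_{\cX/S}$; the remaining ingredients (dimension, smoothness, properness of the fibres) are supplied by Theorem~\ref{T:repre-Pic}, Proposition~\ref{P:smPic} and Proposition~\ref{P:Pico-tau}.

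\emph{Parts \eqref{T:Pictau1} and \eqref{T:Pictau2}.} Openness of the torsion component of a relative Picard scheme of a flat projective morphism is the finiteness theorem for Picard schemes (\cite[VI]{FGA}, \cite[\S~9.6]{Kle05} and the references therein); by the displayed identity it gives the openness of $\PIC^{\tau}_{\cX/S}$ in $\PIC_{\cX/S}$. That $\PIC^{\tau}_{\cX/S}$ is of finite type over $S$ then follows by combining this openness with the fibrewise finiteness of $\PIC^{\tau}_{\cX_s/k(s)}$ (Proposition~\ref{P:Pico-tau}) and the boundedness of part \eqref{T:Pictau5}, reducing to Noetherian $S$ by a standard limit argument; compatibility with base change is inherited from $\PIC_{\cX/S}$ (Theorem~\ref{T:repre-Pic}) and from the field-extension-invariance of $\PIC^{\tau}$ recalled before Proposition~\ref{P:geo-Pico}. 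Part \eqref{T:Pictau2} needs no reduction: by the deformation theory of the Picard functor \cite[Thm.~3.1.3, Thm.~4.1.3]{Br1}, the obstructions to deforming a line bundle on $\cX$ relatively to $S$ lie in the $H^{2}$ of the fibres; since $\{s'\in S: H^{2}(\cX_{s'},\cO_{\cX_{s'}})=0\}$ is open by upper semicontinuity, over a neighbourhood of $s$ the morphism $\PIC_{\cX/S}\to S$ is formally smooth, hence smooth (being locally of finite presentation), and so is its open subspace $\PIC^{\tau}_{\cX/S}$.

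\emph{Part \eqref{T:Pictau3}.} Assume the fibres of $f$ are geometrically normal. For any discrete valuation ring $R$ with fraction field $K$ mapping to $S$, flatness over the normal ring $R$ with geometrically normal fibres forces $\cX_{R}$ to be normal, and the closed fibre $\cX_{s}$, being geometrically normal and (by cohomological flatness) geometrically connected, is geometrically irreducible; hence a line bundle on $\cX_{R}$ trivial on $\cX_{K}$ is supported on the irreducible principal divisor $\cX_{s}$, so $\Pic(\cX_{R})\to\Pic(\cX_{K})$ is injective, and the valuative criterion for algebraic spaces locally of finite type gives \eqref{T:Pictau3a}. For \eqref{T:Pictau3b} one has $\dim\PIC_{\cX_{s}/k(s)}=\dim\PIC^{o}_{\cX_{s}/k(s)}$ (Proposition~\ref{P:smPic}) with $\PIC^{o}_{\cX_{s}/k(s)}$ proper (Proposition~\ref{P:Pico-tau}), and a semicontinuity argument, reduced to the projective case through $p^{*}$, shows that this common dimension is locally constant on $S$. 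Part \eqref{T:Pictau3c} follows because under these hypotheses $\NSb^{\nu}_{\cX/S}\to S$ is separated, so its zero section is a closed immersion and $\PIC^{\tau}_{\cX/S}$, being its preimage, is closed; equivalently one checks directly, as for projective schemes, that $S$-numerical triviality is a closed condition on $\PIC_{\cX/S}$. \emph{Part \eqref{T:Pictau4}.} If $f$ is smooth, then $\cX_{R}$ is regular over the regular ring $R$, so $\Pic(\cX_{R})\to\Pic(\cX_{K})$ is also surjective; this gives the existence half of the valuative criterion in \eqref{T:Pictau4a}, and uniqueness follows from \eqref{T:Pictau3a} since smooth fibres are normal. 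For \eqref{T:Pictau4b} one extends a numerically trivial line bundle $L$ on $\cX_{K}$ to a line bundle $\wt L$ on $\cX_{R}$ and uses the smooth and proper base change theorem in $\ell$-adic cohomology, together with the characterization of $\PIC^{\tau}$ by torsion first Chern classes, to conclude that $\wt L|_{\cX_{s}}$ is again numerically trivial; hence $\PIC^{\tau}_{\cX/S}$, being of finite type (by \eqref{T:Pictau1}), separated (by \eqref{T:Pictau3a}) and universally closed, is proper over $S$.

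\emph{Part \eqref{T:Pictau5}.} With $S$ Noetherian, $\PIC^{\tau}_{\cX/S}$ is of finite type over $S$ by \eqref{T:Pictau1}, so the functions $s\mapsto\#\pi_{0}\bigl((\PIC^{\tau}_{\cX/S})_{\ov s}\bigr)$ and $s\mapsto\rk\NS^{\nu}(\cX_{\ov s})$ are constructible on $S$ --- the latter again via $p^{*}$ and $\ell$-adic Chern classes, using that $\NS^{\nu}(\cX_{\ov s})$ is finitely generated by Proposition~\ref{P:finit-NS} --- and therefore bounded. I expect the main obstacle to be precisely this first, foundational reduction: producing and controlling the projective cover $p\colon X\to\cX$ and checking that the torsion component, its openness, and the $\ell$-adic Chern-class description are all compatible with $p^{*}$ and with arbitrary base change; to this one must add the usual care in positive and mixed characteristic, where $\PIC^{o}$ may be non-reduced --- which is exactly why part \eqref{T:Pictau3} yields only the weaker separatedness, equidimensionality and closedness, whereas the full properness of part \eqref{T:Pictau4} genuinely uses the smoothness of $f$ through the smooth and proper base change theorem.
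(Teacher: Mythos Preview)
Your overall strategy --- reduce to the projective-scheme case via a Chow cover $p\colon X\to\cX$ and transport properties back through $(p^{*})^{-1}(\PIC^{\tau}_{X/S})=\PIC^{\tau}_{\cX/S}$ --- is different from the paper's proof, which is essentially a sequence of citations: part~\eqref{T:Pictau1} is \cite[Thm.~3.3.3]{Br2} (openness and finite type proved directly for stacks), part~\eqref{T:Pictau3a} is \cite[Prop.~3.2.5]{Br1}, parts~\eqref{T:Pictau3b}--\eqref{T:Pictau3c} follow the FGA arguments using that $\PIC^{o}$ of a geometrically normal fibre is proper (Proposition~\ref{P:Pico-tau}), part~\eqref{T:Pictau4b} comes formally from \eqref{T:Pictau4a} and \eqref{T:Pictau3c}, and part~\eqref{T:Pictau5} is \cite[Thm.~3.4.1]{Br2}. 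Your treatments of \eqref{T:Pictau2}, \eqref{T:Pictau3a} and \eqref{T:Pictau4a} are essentially the same as what underlies the cited references.

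There are, however, two genuine gaps in your reduction. First, Chow's lemma for stacks produces a proper surjection $p\colon X\to\cX$ with $X$ projective over $S$, but gives no control over the composite $X\to S$: it need not be flat, need not have connected fibres, and need not be cohomologically flat in degree zero. The classical openness and finite-type results for $\PIC^{\tau}_{X/S}$ that you invoke from \cite{FGA} and \cite{Kle05} require precisely these hypotheses, so your transfer argument for \eqref{T:Pictau1} (and consequently the arguments for \eqref{T:Pictau3b}, \eqref{T:Pictau3c}, \eqref{T:Pictau5} that lean on it) does not go through as stated. Brochard's proof of \cite[Thm.~3.3.3]{Br2} does use a Chow cover, but then has to work considerably harder, passing through Hilbert-scheme boundedness arguments rather than simply quoting the scheme case. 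Second, your proof of \eqref{T:Pictau1} appeals to the boundedness in \eqref{T:Pictau5}, while your proof of \eqref{T:Pictau5} opens with ``$\PIC^{\tau}_{\cX/S}$ is of finite type over $S$ by \eqref{T:Pictau1}'': this is circular. Finally, your route to \eqref{T:Pictau4b} via $\ell$-adic Chern classes is unnecessarily heavy; once \eqref{T:Pictau3c} and \eqref{T:Pictau4a} are in hand, $\PIC^{\tau}_{\cX/S}$ is a closed subspace of something satisfying the valuative criterion, hence satisfies it too, and properness follows from finite type.
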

Note that, under the assumptions of \eqref{T:Pictau4}, the morphism $\PIC_{\cX/S}\to S$ is not proper, in general, because it is not quasi-compact.
\begin{proof}
Part \eqref{T:Pictau1}: the fact that $\PIC^{\tau}_{\cX/S}\subset \PIC_{\cX/S}$ is open and that $\PIC^{\tau}_{\cX/S}$ is of finite type over $S$ follows from \cite[Thm. 3.3.3]{Br2}; the fact that the formation of $\PIC^{\tau}_{\cX/S}$ commutes with base changed follows from the same property of $\PIC_{\cX/S}$ together with the definition of $\PIC^{\tau}_{\cX/S}$ and the fact that the formation of each $\PIC^{\tau}_{\cX_s/k(s)}$ (for $s\in S$) commutes with field extensions. 

%The fact that $\NSb^{\nu}_{\cX/S}$ is locally of finite type follows from the same property of $\PIC_{\cX/S}$; the fact that the formation of $\NSb^{\nu}_{\cX/S}$ commutes with base change follows from the analogous properties of $\PIC_{\cX/S}$ and $\PIC^{\tau}_{\cX/S}$;  the fact that $\NSb^{\nu}_{\cX/S} \to S$ is unramified follows from the fact that it is locally of finite type and its fibers $(\NSb^{\nu}_{\cX/S})_s= \NSb^{\nu}_{\cX_s/k(s)}$ (for any $s\in S$) are \'etale.

Part \eqref{T:Pictau2} can be  proved as in \cite[Prop. 9.5.19]{Kle05}, using that $H^2(\cX_s,\cO_{\cX_s})$ is an obstruction space for the deformation functor of a line bundle on $\cX_s$ (see \cite[Thm. 3.1.3]{Br1}).
% it follows that there exists an open neighboorhood of $s$ over which $\PIC_{\cX/S}$ is smooth. Since $\PIC^{\tau}_{\cX/S}$ is open in $\PIC_{\cX/S}$ by part \eqref{T:Pictau1}, it follows that also $\PIC^{\tau}_{\cX/S}$ is smooth over the same open neighboorhood of $s$. 

Part \eqref{T:Pictau3a} follows from \cite[Prop. 3.2.5]{Br1}.

Part \eqref{T:Pictau3b} is proved as in \cite[VI, Cor. 2.7]{FGA} using that the  fibers $\PIC_{\cX_s/k(s)}$ do not contain additive components by Proposition \ref{P:Pico-tau}\eqref{P:Pico-tau2}. 

Part \eqref{T:Pictau3c} is proved as in \cite[VI, Cor. 2.3]{FGA} using Proposition \ref{P:Pico-tau}\eqref{P:Pico-tau2} and \cite[VI, Thm. 1.1]{FGA}.

Part \eqref{T:Pictau4a}: $\PIC_{\cX/S}\to S$ satisfies the uniqueness part of the valuative criterion because $\PIC_{\cX/S}$ is separated over $S$, by part \eqref{T:Pictau3a}. The fact that $\PIC_{\cX/S}\to S$ satisfies the existence part of the valuative criterion, under the assumption that $f$ is smooth, is proved as in \cite[VI, Thm. 2.1]{FGA}.

Part \eqref{T:Pictau4b}: $\PIC^{\tau}_{\cX/S}$ satisfies the valuative criterion of properness since $\PIC_{\cX/S}\to S$ does by \eqref{T:Pictau4a} and $\PIC^{\tau}_{\cX/S}\subseteq \PIC_{\cX/S}$ is a closed embedding (hence proper) by \eqref{T:Pictau4b}. We conclude that $\PIC^{\tau}_{\cX/S}$ is proper over $S$ since it is of finite type over $S$ by \eqref{T:Pictau1}.

Part \eqref{T:Pictau5} follows from \cite[Thm. 3.4.1]{Br2}.
\end{proof}

Note that  the ranks of the free abelian groups $\NS^{\nu}(\cX_{\ov{s}})$ can jump by specializations of the points of $S$ and the function $s\mapsto \rk \NS^{\nu}(\cX_{\ov{s}})$ is not constructible (see \cite[p. 235]{BLR} for an explicit example and \cite{MP} for a detailed discussion on the specialization of Neron-Severi groups).

\section{The tautological homomorphism}\label{S:homPhi}

The aim of this section is to construct the tautological  homomorphism $\ov P_{g,n}$ of \eqref{E:Pgnbar}. With this in mind,  let us  introduce the following abelian groups.

\begin{defin}\label{D:abgrps}
Let $\Gamma_{g,n}$ the free abelian group generated by 
$$\un{\lambda}, \: \un{\delta_{\irr}} \: \text{ and } \{\un{\delta_{a,A}}\: : 0\leq a \leq g, \emptyset \subseteq A\subseteq [n] \text{ with } (a,A)\neq (0,\emptyset), (g, [n])\}.$$
%We will also set $\un{\psi_i}:=-\un{\delta_{0,\{i\}}}$ for any $1\leq i \leq n$. 

Let $R_{g,n}$ the subgroup of $\Gamma_{g,n}$ generated by the relations \eqref{E:rela}, so that $\Lambda_{g,n}=\Gamma_{g,n}/R_{g,n}$.
\end{defin}

Consider the universal family $\pi:\Cgb\to \Mgb$ and denote by $\{\sigma_1,\ldots, \sigma_n\}$ the $n$ universal sections. We define the following homomorphism 
\begin{equation}\label{E:Pgn}
\begin{aligned}
P_{g,n}: \Gamma_{g,n} & \longrightarrow \Pic(\Mgb), \\
\un{\lambda} & \mapsto \lambda=\det \left((R\pi)_*(\omega_{\pi})\right), \\
\un{\delta_{\irr}} & \mapsto \delta_{\irr}=\cO(\Delta_{\irr}), \\
\un{\delta_{a,A}} & \mapsto 
\begin{cases} 
\delta_{0,\{i\}}:=-\psi_i=-\sigma_i^*(\omega_{\pi})& \text{ if } (a,A)=(0,\{i\}) \text{ or } (a,A)=(g,\{i\}^c), \\
\delta_{a,A}=\cO(\Delta_{a,A}) & \text{ otherwise,} 
\end{cases}  
\end{aligned}
\end{equation}
where  $\Delta_{\irr}$ and $\Delta_{a,A}$  are the boundary (Cartier) divisors  of $\Mgb$ whose generic points are, respectively, curves with one non-separating node or curves with one separating node whose normalization is a disjoint union of a curve of genus $a$ containing the marked points  in $A$ and a curve of genus $g-a$ containing the marked points in $A^c$ (see \cite[Def. 3.8]{Knu2}).  
The notation $\delta_{0,\{i\}}:=-\psi_i$ is taken from  \cite[Sec. 2]{GKM}, and it is very convenient since it allows to present in a more compact format several of the  formulas we will encounter below. 

%Note that $P_{g,n}(S)$ is functorial in $S$, i.e. for any $\phi:T\to S$ we have that 
%$$
%P_{g,n}(T):\Gamma_{g,n}\xrightarrow{P_{g,n}(S)} \Pic(\Mgb^S)\xrightarrow{\ov \phi^*} \Pic(\Mgb^T), 
%$$
%where $\ov \phi:\Mgb^T\to \Mgb^S$ is the base change morphism induced by $\phi$. 

\begin{prop}\label{P:relPgnS}
The subgroup $R_{g,n}\subset \Gamma_{g,n}$ is contained in the kernel of $P_{g,n}$. In particular, the homomorphism $P_{g,n}$ factors through a homomorphism 
\begin{equation*}\label{E:Pgnb}
\ov P_{g,n}:\Lambda_{g,n}\to \Pic(\Mgb).
\end{equation*}
\end{prop}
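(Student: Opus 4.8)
The plan is to verify that each of the generating relations listed in \eqref{E:rela} is sent to $0$ under $P_{g,n}$, treating the genus cases separately. The strategy in every case is the \emph{test curve / known relation} principle: all the relations in question are classical identities between tautological classes on $\Mgb$ (or, more precisely, identities between the associated line bundles, up to torsion), so the point is to pull them back to a convenient ambient space where they are already known, and then to upgrade the equality of classes to an equality of line bundles by exploiting that $\Pic(\Mgb)$ has no torsion problems in the relevant range — or, more cleanly, by exhibiting the relations as pullbacks of honest line bundle identities on $\Mgb$ itself.

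Concretely, I would proceed as follows. First, the symmetry relation $\un{\delta_{a,A}} = \un{\delta_{g-a,A^c}}$ is immediate from the very definition of the boundary divisor $\Delta_{a,A}$ in \eqref{E:Pgn}: the locus of curves with a separating node splitting off genus $a$ with markings $A$ is literally the same substack as the one splitting off genus $g-a$ with markings $A^c$, so $\cO(\Delta_{a,A}) = \cO(\Delta_{g-a,A^c})$ on the nose (and likewise for the $\psi$-convention at the extremes). Second, for $g \geq 1$ the relations to check are the Mumford-type relation $10\un\lambda = \un{\delta_{\irr}} + 2\sum_A \un{\delta_{1,A}}$ in genus $2$, the relation $12\un\lambda = \un{\delta_{\irr}}$ in genus $1$, and the relation $\un\lambda + \sum_{p\in A}\un{\delta_{0,A}} = 0$ in genus $1$; these are standard and can be deduced by pulling back along the forgetful maps $\Mgb \to \ov\cM_{g,0}$ (resp. $\to \ov\cM_{1,1}$), where the relation $12\lambda = \delta_{\irr}$ on $\ov\cM_{1,1}$ is the line-bundle statement proved by Fulton--Olsson \cite{FO} (and classically by Mumford in the coarse setting), combined with the standard comparison formulas relating $\lambda$, $\psi_i$ and the boundary classes under forgetting a point (Knudsen's formulas, \cite{Knu2}). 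Third, for $g=0$ the relations $\un\lambda = \un{\delta_{\irr}} = 0$ hold because on $\ov\cM_{0,n}$ there is no non-separating node and the Hodge bundle is trivial, and the two remaining families of quadratic-looking relations among the $\un{\delta_{0,A}}$ are exactly Keel's presentation of $\Pic(\ov\cM_{0,n})$ \cite{Kee92} — here I would cite Keel directly, since the whole point is that these are the defining relations there, and they are relations of \emph{line bundles}, not merely of divisor classes.

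The key mechanism that makes this clean is that for $g \leq 1$ one reduces everything to $\ov\cM_{1,1}$ and $\ov\cM_{0,n}$ where the line-bundle (not just cycle-class) relations are already in the literature, while for $g \geq 2$ the relations involve only $\lambda$, $\psi_i$ and boundary classes, and the genus-$2$ relation is obtained from the $g=2$, $n=0$ case (Mumford's relation $10\lambda = \delta_{\irr} + 2\delta_1$ on $\ov\cM_2$, which holds at the level of line bundles) by pulling back along $\Mgb \to \ov\cM_{2,0}$ and unwinding the behaviour of the boundary under the forgetful morphism, which redistributes $\delta_1$ into $\sum_A \delta_{1,A}$ and contributes $\psi$-corrections that cancel. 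I would organise the proof as a short case analysis by $g \in \{0,1,2\}$ (the cases $g \geq 3$ having no extra relations beyond the symmetry one), in each case citing the relevant source for the base identity and then invoking the Knudsen comparison formulas for the pullback along point-forgetting.

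The main obstacle I anticipate is bookkeeping rather than conceptual: one must be careful that the cited relations in \cite{FO}, \cite{Mum65}, \cite{Kee92} are genuinely equalities of line bundles on the \emph{stack} (so that no spurious torsion is lost), and one must correctly track how the boundary divisors $\Delta_{\irr}$, $\Delta_{1,A}$, $\Delta_{0,A}$ and the classes $\psi_i$ transform under the forgetful morphisms $\Mgb \to \ov\cM_{g,n-1}$ and $\Mgb \to \ov\cM_{g,0}$ — in particular the identity $\pi^*\delta_1 = \sum_{A} \delta_{1,A}$ up to the sections, and the fact that $\pi^*\lambda = \lambda$ while $\pi^*\psi$ acquires a boundary correction. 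Once the combinatorics of these pullback formulas is set up, each relation in \eqref{E:rela} falls out by direct substitution, giving $R_{g,n} \subseteq \ker P_{g,n}$ and hence the factorisation through $\Lambda_{g,n} = \Gamma_{g,n}/R_{g,n}$ claimed for $\ov P_{g,n}$.
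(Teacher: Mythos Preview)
Your overall strategy matches the paper's: check the symmetry relation from the definition of $\Delta_{a,A}$, then reduce the remaining relations via forgetful maps to the base cases $\ov\cM_{0,3}$, $\ov\cM_{0,4}$, $\ov\cM_{1,1}$, and $\ov\cM_2$, pulling back with the formulas of \cite[Lemma~3.1]{AC98}.

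There is, however, one genuine gap. You assert that Mumford's relation $10\lambda=\delta_{\irr}+2\delta_1$ holds on $\ov\cM_2$ as an identity of line bundles, but this is exactly the statement that is \emph{not} available in the literature over $\Spec\bbZ$ (nor even over an arbitrary algebraically closed field of positive characteristic --- the reference \cite{Cor07} covers only characteristic zero). The paper spends two substantive steps on it: first, over each $k=\ov k$ one uses that $\Pic(\cM_2^k)=(\bbZ/10\bbZ)\lambda$ (Vistoli for $\char k\neq 2$, Bertin for $\char k=2$) to write $10\lambda=a\delta_{\irr}+b\delta_1$ in $\Pic(\ov\cM_2^k)$, and then the rational Picard computation of \cite{AC98, Mor01} forces $(a,b)=(1,2)$; second, having the relation on every geometric fiber of $\ov\cM_2\to\Spec\bbZ$, a seesaw argument (using cohomological flatness and $\Pic(\Spec\bbZ)=0$) upgrades it to an honest line-bundle identity over $\bbZ$. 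Without this, your reduction to $\ov\cM_2$ does not close. (Incidentally, no $\psi$-corrections arise in the pullback of this relation along $\ov\cM_{2,n}\to\ov\cM_2$: one gets $\zeta^*\delta_1=\sum_A\delta_{1,A}$ on the nose.)

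A smaller point: for $g=0$ the paper does not invoke Keel (who works over a field) but instead reduces directly to $\ov\cM_{0,3}=\Spec\bbZ$ and $\ov\cM_{0,4}=\bbP^1_{\bbZ}$, where the needed identities are immediate. Your appeal to \cite{Kee92} would require a separate justification that those relations hold integrally over $\Spec\bbZ$.
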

\begin{proof}
We will prove that each of the relations of \eqref{E:rela} holds true for the tautological line bundles on $\Pic(\Mgb)$.
% is sent to zero by  $P_{g,n}$. 

First of all, the relation ${\delta_{a,A}}={\delta_{g-a,A^c}}$ holds true by definition if $(a,A)=(0,\{i\}), (g,\{i\}^c)$, and because $\Delta_{a,A}=\Delta_{g-a,A^c}$ in the other cases. In order to look at the other relations, we will distinguish the cases $g=0,1,2$.

\un{I Case:} $g=0$.
\begin{itemize}
\item The relation $\delta_{\irr}=0$ hold true because $\Delta_{\irr}=\emptyset$ if $g=0$. 
\item The relation $\lambda=0$ holds true because $\pi_*(\omega_{\pi})=0$ and $(R^1\pi)_*(\omega_{\pi})=\cO_{\Mgb}$. 
\item The relation 
$$\sum_{\substack{z\in A\\x,y\not\in A}}\delta_{0,A}=0$$ 
holds true for $n=3$ because $\ov{\cM}_{0,3}=\Spec \bbZ$ and hence $\delta_{0,\{z\}}=-\psi_z=0$. The case of $n\geq 4$ follows by pull-back through the morphism 
$$\zeta_{x,y,z}:\ov\cM_{0,n}\to \ov{\cM}_{0,3}$$
that forgets the marked points in $[n]\setminus\{x,y,z\}$ (and then it stabilizes), and using the pull-back formulas of \cite[Lemma 3.1]{AC98}.
\item The relation
$$\sum_{\substack{p,q\in A \\r,s\not\in A}}\delta_{0,A}= \sum_{\substack{p,r\in A \\q,s\not\in A}}\delta_{0,A}$$ 
holds true for $n=4$ because $\ov{\cM}_{0,4}=\bbP^1_{\bbZ}$ and the three boundary divisors  (which are identified with $0_\bbZ$, $\infty_\bbZ$ and $1_\bbZ$) are linearly equivalent. 
The case of $n\geq 5$ follows by pull-back through the morphism 
$$\zeta_{p,q,r,s}:\ov\cM_{0,n}\to \ov{\cM}_{0,4}$$
that forgets the marked points in $[n]\setminus\{p,q,r,s\}$ (and then it stabilizes), and using the pull-back formulas of \cite[Lemma 3.1]{AC98}.

\end{itemize}

\un{II Case:} $g=1$.
 The relations 
$$\begin{sis}
& 12\lambda-\delta_{\irr}=0, \\
& \lambda+\sum_{p\in A} \delta_{0,A}=0
\end{sis}
$$
 hold in $\Pic(\ov\cM_{1,1})$, as it is shown in \cite[Chap. XIII, Example (7.11)]{GAC2} (whose proof works over an arbitrary base and not only over $\bbC$). The case of $n\geq 2$ follows by pull-back through the morphism 
$$\zeta_{p}:\ov\cM_{1,n}\to \ov{\cM}_{1,1}$$
that forgets the marked points in $[n]\setminus\{p\}$ (and then it stabilizes), and using the pull-back formulas of \cite[Lemma 3.1]{AC98}.

\un{III Case:} $g=2$.

We have to prove that the relation
\begin{equation}\label{E:relg2}
10\lambda-\delta_{\irr}-2\sum_{A\subseteq [n]} \delta_{1,A}=0
\end{equation}
holds true in $\Pic(\ov\cM_{2,n})$. We will prove it in three steps.

\begin{itemize}
\item Step I: The relation  
\begin{equation}\label{E:relg20}
10\lambda-\delta_{\irr}-2\delta_{1}=0
\end{equation}
 holds true in $\Pic(\ov\cM_{2}^k)$ for any algebraically closed field $k$\footnote{This was proved in \cite{Cor07} if $\char(k)=0$.}. 
 
 Indeed, since the stack $\ov\cM_{2}^k$ is regular, we have an exact sequence (see \cite[Prop. 1.9]{PTT15})
 $$
 0\to \langle \delta_{\irr}, \delta_{1}\rangle \to \Pic(\ov\cM_{2}^k)\to \Pic(\cM_{2}^k)\to 0.
 $$
We know that $\Pic(\cM_{2}^k)$ is a cyclic group of order $10$ generated by $\lambda$ (see \cite{Vis98} if $\char(k)\neq 2$ and \cite{Ber} if $\char(k)=2$). Therefore, we must have that 
\begin{equation}\label{E:relg2NEW}
10\lambda=a\delta_{\irr}+b\delta_1\in \Pic(\ov\cM_{2}^k)\quad \text{ for certain } a,b\in \bbZ.
\end{equation}
 By the structure of  $\Pic(\ov\cM_{2}^k)_{\bbQ}$ (see \cite{AC98} if $\char(k)=0$ and \cite{Mor01} if $\char(k)=p>0$), it follows that $\delta_{\irr}$ and $\delta_1$ are linearly independent, and that the relation \eqref{E:relg20} holds true in  $\Pic(\ov\cM_{2}^k)_{\bbQ}$. Hence, we deduce that  the relation \eqref{E:relg2NEW} must coincide with  \eqref{E:relg20}, and we are done. 

\item Step II: The relation  \eqref{E:relg20} holds true in $\Pic(\ov\cM_{2})$. 

 The line bundle $\cL:=10\lambda-\delta_{\irr}-2\delta_{1}\in \Pic(\ov\cM_2)$ is trivial on the geometric fibers of $f:\ov\cM_{2}\to \Spec \bbZ$ by the Step I. Hence, by the seesaw principle applied to the proper morphism $f$ with geometrically integral fibers (which holds by the same proof of \cite[Cor. B.9]{Fri18} replacing \cite[Prop. B.4]{Fri18} with \cite[Cor. A.2.4]{Br2}), we get that $\cL=f^*(\cM)$ for a certain line bundle $\cM$ on $\Spec \bbZ$. However, $\Pic(\Spec \bbZ)=0$ which implies that $\cL$ is trivial. 

\item Step III: The relation  \eqref{E:relg2} holds true in $\Pic(\ov\cM_{2,n})$.

Consider  the morphism 
$$\zeta:\ov\cM_{2,n}\to \ov{\cM}_{2}$$
that forgets all the marked points  (and then it stabilizes). Using the pull-back formulas of \cite[Lemma 3.1]{AC98}, it follows that the pull-back of the relation \eqref{E:relg20} in $\Pic(\ov\cM_2)$ is equal to  the relation \eqref{E:relg2} in $\Pic(\ov\cM_{2,n})$ and we are done. 

\end{itemize}

\end{proof}

\section{The Picard group of $\Mgb^k$}\label{S:Picgrp}

\emph{Throughout this section, we assume that $k=\ov k$ is an algebraically closed field of characteristic $0$ or $p>0$.}

The aim of this section is to study the Picard group of $\Mgb^k$ and of $\Mg^k$. First we made the following

\begin{defin}\label{D:tautPic}[Tautological Picard groups]
\noindent 
\begin{enumerate}[(i)]
\item The \emph{tautological Picard group of $\Mgb^k$}, denoted by $\Pic^{\ta}(\Mgb^k)$, is the subgroup of the Picard group $\Pic(\Mgb^k)$ generated by the tautological line bundles, i.e. the Hodge line bundle $\lambda$, the cotangent line bundles $\{\psi_i\}_{i=1}^n$, and the boundary divisors $\delta_{\irr}$ and 
$$\{\delta_{a,A}\: : 0\leq a \leq g, \emptyset \subseteq A\subseteq [n] \text{ with } |A|\geq 2 \: \text { if } a=0 \: \text{ and } \: |A|\leq g-2\:  \text{ if } a=g\},$$
with the identification $\delta_{a,A}=\delta_{g-a,A^c}$. For convenience in the formulas that follow, we will set $\delta_{0,\{i\}}=\delta_{g,\{i\}^c}:=\psi_i$ for any $1\leq i \leq n$ (following the notation of \cite[Sec. 2]{GKM}).
\item The \emph{tautological Picard group of $\Mg^k$}, denoted by $\Pic^{\ta}(\Mg^k)$, is the subgroup of the Picard group $\Pic(\Mg^k)$ generated by the tautological line bundles, i.e. the Hodge line bundle $\lambda$ and the cotangent line bundles $\{\psi_i\}_{i=1}^n$. 
\end{enumerate}
\end{defin}

We collect in the following Theorem all the results that will be proved in this section.  

\begin{teo}\label{T:Pic-taut}
\noindent 
\begin{enumerate}
\item \label{T:Pic-taut1} The tautological Picard group $\Pic^{\ta}(\Mgb^k)$ of $\Mgb^k$ is a free abelian group and the subgroup of relations among the tautological line bundles is generated by the relations  given in \eqref{E:rela}. 
\item \label{T:Pic-taut2} The tautological Picard group $\Pic^{\ta}(\Mg^k)$ of $\Mg^k$ is equal to 
\begin{equation}\label{E:ex-Pic-op}
\Pic^{\ta}(\Mg^k)=
\begin{cases}
\bbZ\langle \lambda\rangle \oplus \bbZ\langle \psi_1\rangle\oplus \cdots \oplus \bbZ\langle \psi_n\rangle \cong \bbZ^{n+1}& \text{ if } g\geq 3, \\
\frac{\bbZ}{10\bbZ} \langle \lambda\rangle \oplus \bbZ\langle \psi_1\rangle\oplus \cdots \oplus \bbZ\langle \psi_n\rangle \cong \frac{\bbZ}{10\bbZ}\oplus \bbZ^{n}& \text{ if } g=2, \\
\frac{\bbZ}{12\bbZ} \: \text{ generated by } \lambda=\psi_1=\ldots=\psi_n & \text{ if } g=1, \\
0 & \text{ if } g=0.
\end{cases}
\end{equation}
%RELATIONS in \Mg
%relations among the tautological generators of $\Pic^{\ta}(\Mg^k)$ are generated by the following ones
%\begin{equation}\label{E:rela-op}
%\begin{sis}
%& 10 \lambda=0 & \text{ if } g=2, \\
%& 12 \lambda=0& \text{ if } g=1, \\
%& \lambda-\psi_ p=0  \: \text{ for any } 1\leq p \leq n & \text{ if } g=1, \\
%& \lambda=0& \text{ if } g=0,\\
%& \psi_z=0 \: \text{ for any } 1\leq z\leq n  & \text{ if } g=0.\\
%\end{sis}
%\end{equation}
Moreover, the restriction homomorphism $\res_{g,n}:\Pic(\Mgb^k)\to \Pic(\Mg^k)$ induces an isomorphism 
\begin{equation}\label{E:res-iso}
[\res_{g,n}]:   \frac{\Pic(\Mgb^k)}{\Pic^{\ta}(\Mgb^k)}\xrightarrow{\cong} \frac{\Pic(\Mg^k)}{\Pic^{\ta}(\Mg^k)}.  
\end{equation}
\item \label{T:Pic-taut3} Let $F:\Mg^k\to \cM_g^k$ for $g\geq 2$ (resp. $F:\cM_{1,n}^k\to \cM_{1,1}^k$ for $g=1$) be the forgetful morphism. The pull-back map $F^*$ on the Picard groups is injective and it induces isomorphisms 
\begin{equation}\label{E:pull-iso}
\begin{sis}
& [F^*]: \frac{\Pic(\cM_g^k)}{\Pic^{\ta}(\cM_g^k)}\xrightarrow{\cong} \frac{\Pic(\Mg^k)}{\Pic^{\ta}(\Mg^k)} \: \text{ for } g\geq 2,\\
& [F^*]: \frac{\Pic(\cM_{1,1}^k)}{\Pic^{\ta}(\cM_{1,1}^k)}\xrightarrow{\cong} \frac{\Pic(\cM_{1,n}^k)}{\Pic^{\ta}(\cM_{1,n}^k)} \: \text{ for } g=1.\\
\end{sis}
\end{equation}
\item \label{T:Pic-taut4} 
There is a canonical splitting 
\begin{equation}\label{E:splitPic}
\Pic(\Mgb^k)=\Pic^{\ta}(\Mgb^k)\oplus \Pic(\Mgb^k)_{\tor}.
\end{equation}
Moreover, we have that
\begin{enumerate}
\item \label{T:Pic-taut4a}  $\Pic(\Mgb^k)_{\tor}$ is equal to zero if either $\char(k)=0$  or $g\leq 5$. 
\item \label{T:Pic-taut4b} $\Pic(\Mgb^k)_{\tor}$ is a finite $p$-group if $\char(k)=p>0$. 
\item \label{T:Pic-taut4c} If $g\geq 2$, then the pull-back along the forgetful-stabilization morphism $\ov F:\Mgb^k\to \ov{\cM}^k_g$ induces an isomorphism 
\begin{equation}\label{E:tor-pull}
\ov F^*: \Pic(\ov{\cM}^k_g)_{\tor}\xrightarrow{\cong} \Pic(\Mgb^k)_{\tor}.
\end{equation}
\end{enumerate}
\end{enumerate}
\end{teo}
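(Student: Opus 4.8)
The plan is to prove the five itemized statements in a logical order dictated by their interdependencies, starting from the open stacks and working towards $\Mgb^k$. First I would establish part \eqref{T:Pic-taut2}: the computation of $\Pic^{\ta}(\Mg^k)$ is essentially a matter of collecting known results. For $g\geq 3$ the independence of $\lambda,\psi_1,\dots,\psi_n$ follows from the rational computation of $\Pic(\Mg^k)_{\bbQ}$ (Arbarello--Cornalba--Moriwaki), while for $g=2$ one uses $\Pic(\cM_2^k)\cong\bbZ/10\bbZ$ (Vistoli, Bertin) pulled back along the point-forgetting morphism together with the pull-back formulas of \cite[Lemma 3.1]{AC98}, and similarly for $g=1$ via $\Pic(\cM_{1,1}^k)\cong\bbZ/12\bbZ$ (Fulton--Olsson). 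The isomorphism \eqref{E:res-iso} is the key structural input: since $\Mgb^k$ is regular, the restriction $\res_{g,n}$ fits in the excision exact sequence $\bigoplus_{\text{boundary}}\bbZ\cdot\delta \to \Pic(\Mgb^k)\to\Pic(\Mg^k)\to 0$ (as in \cite[Prop. 1.9]{PTT15}), and since every boundary divisor class is tautological, the induced map on the quotients by the tautological subgroups is surjective with kernel generated by relations among boundary classes — so one must check that all such relations already lie in $R_{g,n}$; this is exactly the content of part \eqref{T:Pic-taut1}, so \eqref{E:res-iso} and \eqref{T:Pic-taut1} should be proven together, feeding off the weak Franchetta conjecture (Theorem \ref{franchetta}) which controls $\Pic(\Mg^k)$ modulo tautological classes.

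Next I would prove part \eqref{T:Pic-taut3}. The forgetful morphism $F:\Mg^k\to\cM_g^k$ (for $g\geq 2$) has fibers that are open subsets of smooth curves (the $n$-fold fibered product of the universal curve minus diagonals), and its relative Picard group is generated by the $\psi_i$ and the components of the diagonal loci; pulling back along a section and using the Leray-type sequence for $F^*$ one sees $F^*$ is injective and identifies the cokernels. The case $g=1$ is analogous with $\cM_{1,1}^k$ as the base. The main obstacle in this part is bookkeeping the relative Picard group of the configuration-space-type fibration, but this is standard and parallel to the computations in \cite{AC98}.

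Finally, part \eqref{T:Pic-taut4}, which is the technical heart. The splitting \eqref{E:splitPic} follows once one knows $\Pic^{\ta}(\Mgb^k)$ is free (part \eqref{T:Pic-taut1}) and that $\Pic(\Mgb^k)$ is finitely generated with $\Pic(\Mgb^k)/\Pic^{\ta}(\Mgb^k)$ equal to its torsion part; the latter uses algebraic simple connectedness of $\Mgb^k$ (Theorem \ref{T:simply-con}), which forces $\Pic^{\tau}(\Mgb^k)/\Pic^o(\Mgb^k)$ — a priori the obstruction to splitting — to be trivial, so that the torsion of $\Pic(\Mgb^k)$ injects into $\Pic(\Mgb^k)/\Pic^{\ta}(\Mgb^k)$, which via \eqref{E:res-iso} and \eqref{E:pull-iso} is identified with $\Pic(\cM_g^k)/\Pic^{\ta}(\cM_g^k)$ (resp. $\Pic(\cM_{1,1}^k)/\Pic^{\ta}(\cM_{1,1}^k)$); this is exactly \eqref{E:nontaut}. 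For \eqref{T:Pic-taut4b}, that the torsion is a $p$-group when $\char k=p>0$: the $\ell$-part for $\ell\neq p$ is controlled by $H^1_{et}(\Mgb^k,\bbZ/\ell)$ which vanishes by algebraic simple connectedness, combined with the Kummer sequence and the fact that $\Pic^{\tau}$ has no $\ell$-torsion beyond what comes from $\pi_1^{et}$. For \eqref{T:Pic-taut4a}: in characteristic $0$ there is no $p$ to worry about; for $g\leq 5$ one invokes the explicit knowledge of $\Pic(\cM_g^k)$ from \cite{diL} (together with \cite{Vis98,Ber} for $g=2$), which has no torsion beyond what is already tautological, so $\Pic(\cM_g^k)/\Pic^{\ta}(\cM_g^k)=0$ and hence $\Pic(\Mgb^k)_{\tor}=0$ by \eqref{E:nontaut}. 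Part \eqref{T:Pic-taut4c} is then immediate from chasing the identifications. The hardest step is clearly reconciling \eqref{T:Pic-taut1} with the structure of $\Pic(\Mg^k)$ modulo tautological classes: one must show that the excision sequence introduces no new relations among boundary divisors beyond \eqref{E:rela}, which requires a careful analysis of the geometry of the boundary strata and their intersections — this is where the bulk of the genus-by-genus casework ($g=0,1,2$ handled separately, then $g\geq 3$ via the rational computation and Franchetta) will be concentrated.
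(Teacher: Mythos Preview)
Your overall architecture is close to the paper's, but there are two points where your proposal diverges in ways that matter.

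\textbf{Part \eqref{T:Pic-taut1} is much simpler than you suggest, and does not use Franchetta.} You anticipate ``careful analysis of the geometry of the boundary strata'' and ``genus-by-genus casework'' feeding off Theorem~\ref{franchetta}. In fact the paper's proof of \eqref{T:Pic-taut1} is a three-line diagram chase: by Proposition~\ref{P:relPgnS} we already know $R_{g,n}\subseteq\ker(P_{g,n}(k))$; the rational computation of Arbarello--Cornalba/Moriwaki gives $\ker(P_{g,n}(k))\subseteq\Gamma_{g,n}\cap(R_{g,n})_\bbQ$; and a direct inspection of the generators \eqref{E:rela} shows $R_{g,n}$ is saturated in $\Gamma_{g,n}$, i.e.\ $\Gamma_{g,n}\cap(R_{g,n})_\bbQ=R_{g,n}$. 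So $\ker(P_{g,n}(k))=R_{g,n}$ and $\Pic^{\ta}(\Mgb^k)\cong\Gamma_{g,n}/R_{g,n}$ is free. The Weak Franchetta Conjecture enters only in part \eqref{T:Pic-taut3}. Consequently the natural order is \eqref{T:Pic-taut1}$\Rightarrow$\eqref{T:Pic-taut2}$\Rightarrow$\eqref{T:Pic-taut3}$\Rightarrow$\eqref{T:Pic-taut4}, not starting with \eqref{T:Pic-taut2} as you propose; indeed the paper's proof of \eqref{E:ex-Pic-op} uses $\ker(P_{g,n}(k))=R_{g,n}$ from \eqref{T:Pic-taut1}.

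\textbf{There is a genuine gap in your argument for the splitting \eqref{E:splitPic}.} You correctly note that $\Pic(\Mgb^k)_{\tor}\hookrightarrow\Pic(\Mgb^k)/\Pic^{\ta}(\Mgb^k)$ since $\Pic^{\ta}$ is free, but you do not explain why this injection is \emph{surjective}, which is what the splitting requires. Your reference to simple connectedness forcing ``$\Pic^{\tau}/\Pic^o$ --- a priori the obstruction to splitting --- to be trivial'' is not right: in positive characteristic $\Pic^{\tau}/\Pic^o$ can be a nontrivial $p$-group, and in any case this quotient is not the obstruction to the splitting you want. What is actually needed is that the composite $\Pic^{\ta}(\Mgb^k)\hookrightarrow\Pic(\Mgb^k)\twoheadrightarrow\Pic(\Mgb^k)_{\tf}$ is an isomorphism, i.e.\ that $\Pic^{\ta}$ is saturated in $\Pic$. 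The paper establishes this for $n=0$ (and $g\geq 3$) by invoking the test curves constructed in \cite[\S 2]{AC87}: one exhibits $\rk\Pic^{\ta}(\ov\cM_g^k)$ integral curves whose intersection matrix with a basis of $\Pic^{\ta}$ is unimodular, which forces the finite-index inclusion $\Pic^{\ta}\hookrightarrow\Pic_{\tf}$ to be an equality. Once the splitting is known for $n=0$, it propagates to arbitrary $n$ via the isomorphisms \eqref{E:res-iso} and \eqref{E:pull-iso} exactly as you sketch. Without the test-curve input (or an equivalent saturation argument), your proof of \eqref{E:splitPic} does not close.
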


% The above result is proved for $g\geq 3$ and in characteristic zero  by  Arbarello-Cornalba \cite[Thm. 2]{AC87}, and for $2g-2+n>0$ and with rational coefficients by Arbarello-Cornalba \cite{AC98} in characteristic zero and by Moriwaki \cite{Mor01} in arbitrary characteristic. 
The above Theorem is well-known if $g=0$ due to the work of Keel \cite{Kee92}. 
%which works in arbitrary characteristics (see also \cite[Sec. 3]{AC98}).
 Therefore, we will sometimes assume in what follows that $g\geq 1$. 
 
 \begin{rmk}\label{R:rkPic}
 It follows from the above Theorem (see also \cite{Kee92} for $g=0$) that 
 $$
 \rk \Pic^{\ta}(\Mgb^k)=
 \begin{cases} 
1+ \left\lceil \frac{2^n(g+1)}{2}\right\rceil  & \text{ if } g\geq 3,\\ 
  \left\lceil \frac{2^n(g+1)}{2}\right\rceil & \text{ if } g=2,\\ 
  2^n-n & \text{ if } g=1,\\ 
  2^{n-1}-\binom{n}{2}-1 & \text{ if } g=0.\\ 
 \end{cases}
$$ 
 \end{rmk}

We subdivide the proof of the above Theorem  into four separate subsections, one for each item.

\subsection{Tautological Picard group $\Pic^{\ta}(\Mgb^k)$}\label{S:item1}

The aim of this subsection is to prove Theorem \ref{T:Pic-taut}\eqref{T:Pic-taut1}. 

By the definition of the tautological subgroup $\Pic^{\ta}(\Mgb^k)\subseteq \Pic(\Mgb^k)$, the homomorphism \eqref{E:Pgn} gives rise to  a surjective homomorphism 
\begin{equation}\label{E:Pgnk}
P_{g,n}(k): \Gamma_{g,n}  \twoheadrightarrow \Pic^{\ta}(\Mgb^k),
\end{equation}
whose kernel contains $R_{g,n}$ by Proposition \ref{P:relPgnS}.

%OLD TEXT
%In order to achieve that, it is convenient to introduce the following abelian groups.

%\begin{defin}\label{D:abgrps}
%Let $\Gamma_{g,n}$ the free abelian group generated by 
%$$\un{\lambda}, \: \un{\delta_{\irr}} \: \text{ and } \{\un{\delta_{a,A}}\: : 0\leq a \leq g, \emptyset \subseteq A\subseteq [n] \text{ with } (a,A)\neq (0,\emptyset), (g, [n])\}.$$

%Let $R_{g,n}$ the subgroup of $\Gamma_{g,n}$ generated by the relations \eqref{E:rela}, so that $\Lambda_{g,n}=\Gamma_{g,n}/R_{g,n}$.
%\end{defin}

%By the definition of $\Pic^{\ta}(\Mgb^k)$, there is a surjective homomorphism 
%\begin{equation}\label{E:Pgn}
%\begin{aligned}
%P_{g,n}: \Gamma_{g,n} & \twoheadrightarrow \Pic^{\ta}(\Mgb^k), \\
%\un{\lambda} & \mapsto \lambda, \\
%\un{\delta_{\irr}} & \mapsto \delta_{\irr}, \\
%\un{\delta_{a,A}} & \mapsto 
%\begin{cases} 
%-\psi_i & \text{ if } (a,A)=(0,\{i\}) \text{ or } (a,A)=(g,\{i\}^c), \\
%\delta_{a,A} & \text{ otherwise.} 
%\end{cases}  
%\end{aligned}
%\end{equation}

\begin{proof}[Proof of Theorem \ref{T:Pic-taut}\eqref{T:Pic-taut1}]
Consider the following commutative diagram 
\begin{equation}\label{E:diag-Pic}
\begin{tikzcd}        
\ker(P_{g,n}(k)) \arrow[hook]{r} \arrow[hook]{d}  & \Gamma_{g,n}\cap (R_{g,n})_{\bbQ}   \arrow[hook]{r} \ar[hook]{d}& (R_{g,n})_{\bbQ} \arrow[hook]{d} \\
\Gamma_{g,n} \ar[equal]{r} \arrow[twoheadrightarrow]{d}{P_{g,n}(k)} & \Gamma_{g,n}  \arrow[hook]{r}   \arrow[twoheadrightarrow]{d}& (\Gamma_{g,n})_{\bbQ}  \arrow[twoheadrightarrow]{d}\\
\Pic^{\ta}(\Mgb^k) \arrow[twoheadrightarrow]{r} &  \Pic^{\ta}(\Mgb^k)_{\tf}\arrow[hook]{r} & \Pic^{\ta}(\Mgb^k)_{\bbQ}
\end{tikzcd}
\end{equation}
The vertical columns in the above diagram are all exact: the left vertical column  is exact by definition;  the right vertical column is exact by the description of $ \Pic^{\ta}(\Mgb^k)_{\bbQ}$ (which indeed coincide with  $\Pic(\Mgb^k)_{\bbQ}$) obtained by \cite[Thm. 2.2]{AC98} in characteristic zero and \cite{Mor01} in positive characteristic; the middle vertical column is exact because of the  inclusion $\Pic^{\ta}(\Mgb^k)_{\tf}\hookrightarrow \Pic^{\ta}(\Mgb^k)_{\bbQ}$. 

Now observe that
\begin{equation}\label{E:equ-Rgn}
\Gamma_{g,n}\cap (R_{g,n})_{\bbQ}=R_{g,n}\subseteq \ker(P_{g,n}(k)),
\end{equation}
where the first equality is a straightforward computation using the generators \eqref{E:rela} of $R_{g,n}$ and the second one follows from Proposition \ref{P:relPgnS}. 
%the fact that the elements of $R_{g,n}$ give rise to actual relations among the tautological line bundles, which is proved  in \cite[Sec. 3]{AC98} (whose proof is characteristic-free). 

From the commutative diagram \eqref{E:diag-Pic} with exact columns and the equalities \eqref{E:equ-Rgn}, we get 
\begin{equation}\label{E:pres-Pict}
\Pic^{\ta}(\Mgb^k)=\Pic^{\ta}(\Mgb^k)_{\tf}\xleftarrow[\cong]{[P_{g,n}(k)]} \frac{\Gamma_{g,n}}{R_{g,n}},
\end{equation}
which concludes the proof. 
\end{proof}

\subsection{Tautological Picard group $\Pic^{\ta}(\Mg^k)$ and the restriction homomorphism}\label{S:item2}

The aim of this subsection is to prove Theorem \ref{T:Pic-taut}\eqref{T:Pic-taut2}. 
%We will assume throughout this subsection that $g \geq 1$, since the case $g=0$ follows easily from the work of Keel \cite{Kee92}.

With this aim, it is convenient to introduce the following abelian groups.

\begin{defin}\label{D:abgrps2}
Fix an hyperbolic pair $(g,n)$ such that $g\geq 1$. 
\begin{enumerate}[(i)]
\item Let $\Gamma_{g,n}^o$ be the free abelian group generated by 
$$\un{\lambda}\: \text{ and } \{\un{\delta_{0,\{i\}}}, \un{\delta_{0,\{i\}^c}}\: : 1\leq i \leq n\}.$$
Let $R_{g,n}^o$ be the subgroup of $\Gamma_{g,n}^o$ generated by 
\begin{equation*}
\begin{sis}
& \un{\delta_{0,\{i\}}}-\un{\delta_{0,\{i\}^c}} & \text{ for any } 1\leq i \leq n, \\
& 10\un{\lambda} & \text{ if } g=2, \\
& 12\un{\lambda}& \text{ if } g=1, \\
& \un{\lambda}+\un{\delta_{0,\{p\}}}  \: \text{ for any } 1\leq p \leq n & \text{ if } g=1. \\
%& \un{\lambda}=0& \text{ if } g=0,\\
%& \un{\psi_z}=0 \: \text{ for any } 1\leq z\leq n  & \text{ if } g=0.\\
\end{sis}
\end{equation*}
Set $\Lambda_{g,n}^o:=\Gamma_{g,n}^o/R_{g,n}^o$. 

\item Let $\Gamma_{g,n}^{\partial}$ the free abelian group generated by 
$$ \un{\delta_{\irr}} \: \text{ and } \{\un{\delta_{a,A}}\: : 0\leq a \leq g, \emptyset \subseteq A\subseteq [n] \text{ with } |A|\geq 2 \: \text { if } a=0 \: \text{ and } \: |A|\leq g-2 \text{ if } a=g\}.$$
%Let $R_{g,n}^{\partial}$ the subgroup of $\Gamma_{g,n}^{\partial}$ generated by $\{\un{\delta_{a,A}}-\un{\delta_{g-a,A^c}}\}$ and set $\Lambda_{g,n}^{\partial}:=\Gamma_{g,n}^{\partial}/R_{g,n}^{\partial}$

\end{enumerate}

\end{defin}

\begin{rmk}\label{R:abgrps}
From Definitions \ref{D:abgrps} and \ref{D:abgrps2}, it follows that there is an exact sequence 
$$0\to \Gamma_{g,n}^{\partial} \to \Gamma_{g,n}\xrightarrow{\rho} \Gamma_{g,n}^o\to 0, $$
 and that we have  
 $
 %R_{g,n}^{\partial}=\Gamma_{g,n}\cap R_{g,n}.
 R_{g,n}^o=\rho(R_{g,n}).
 $
\end{rmk}

By the definition of $\Pic^{\ta}(\Mg^k)$, there is a surjective homomorphism 
\begin{equation}\label{E:Pgno}
\begin{aligned}
P_{g,n}^o: \Gamma_{g,n}^o & \twoheadrightarrow \Pic^{\ta}(\Mg^k), \\
\un{\lambda} & \mapsto \lambda, \\
\un{\delta_{0,\{i\}}}, \un{\delta_{g,\{i\}^c}} & \mapsto -\psi_i. \\
\end{aligned}
\end{equation}

\begin{proof}[Proof of Theorem \ref{T:Pic-taut}\eqref{T:Pic-taut2}]
We can assume that $g\geq 1$, since the case $g=0$ follows from \cite{Kee92}.
Consider the following commutative diagram 
\begin{equation}\label{E:diag-res}
\begin{tikzcd} 
%\ker(P_{g,n}^{\partial}) \ar[r] \ar[hook]{d}  & R_{g,n} \ar[r] \ar[hook]{d}  & \ker(P_{g,n}^o)\ar[hook]{d} \\
\Gamma_{g,n}^{\partial} \ar[hook]{r}  \ar{d}{P_{g,n}^{\partial}} \arrow[bend right=60, swap]{dd}{\wt P_{g,n}^{\partial}}&   \Gamma_{g,n} \ar[twoheadrightarrow]{r}{\rho} \ar[twoheadrightarrow]{d}{P_{g,n}(k)} & \Gamma_{g,n}^o  \ar[twoheadrightarrow]{d}{P_{g,n}^o} \\
\ker(\res_{g,n}^{\ta}) \ar[hook]{r}  \ar[hook]{d} & \Pic^{\ta}(\Mgb^k)  \ar[twoheadrightarrow]{r}{\res_{g,n}^{\ta}} \ar[hook]{d} & \Pic^{\ta}(\Mg^k) \ar[hook]{d}\\
\ker(\res_{g,n}) \ar[hook]{r} & \Pic(\Mgb^k)  \ar[twoheadrightarrow]{r}{\res_{g,n}} & \Pic(\Mg^k)\\
\end{tikzcd}
\end{equation}
where:
\begin{itemize}
\item $\res_{g,n}$ is the restriction homomorphism and $\res_{g,n}^{\ta}:=(\res_{g,n})_{|\Pic^{\ta}(\Mgb^k)}$; 
\item the equality $P_{g,n}^o\circ \rho= \res_{g,n}^{\ta}\circ P_{g,n}(k)$ follows from the definitions \eqref{E:Pgn} and \eqref{E:Pgno}; 
\item $P_{g,n}^{\partial}$ is the restriction of $P_{g,n}(k)$ to $\Gamma_{g,n}^{\partial}$ and $\wt P_{g,n}^{\partial}$ is the composition of $P_{g,n}^{\partial}$ with the inclusion $\ker(\res_{g,n}^{\ta})\hookrightarrow \ker(\res_{g,n})$;
\item the first row is exact by Remark \ref{R:abgrps};
\item the middle row is exact because $\res_{g,n}^{\ta}$ is surjective by the definition of the tautological subgroups;
\item the last row is is exact because $\res_{g,n}$ is surjective since $\Mgb^k$ is smooth over $k$ (see e.g.  \cite[Prop. 1.9]{PTT15}).
\end{itemize}

By construction, the image of the composition 
$$\Gamma_{g,n}^{\partial} \xrightarrow{\wt P_{g,n}^{\partial}} \ker(\res_{g,n})\hookrightarrow \Pic(\Mgb^k)$$ 
is the subgroup of $\Pic(\Mgb^k)$ generated by the boundary divisors of $\Mgb^k$ and this subgroup is exactly $\ker(\res_{g,n})$ by  \cite[Prop. 1.9]{PTT15}. Hence we deduce that:
\begin{enumerate}[(a)]
\item $\ker(\res_{g,n}^{\ta})=\ker(\res_{g,n})$;
\item $P_{g,n}^{\partial}$ is surjective.  
\end{enumerate}  
From (a), we get that $\res_{g,n}$ induces the isomorphism \eqref{E:res-iso}. From (b) together with the fact that $\ker(P_{g,n}(k))=R_{g,n}$ (see \eqref{E:pres-Pict}), we get that $\ker(P_{g,n}^o)=\rho(R_{g,n})$, which coincides with $R_{g,n}^o$ by Remark \ref{R:abgrps}.  Hence the morphism $P_{g,n}^o$ induces the isomorphism
$$
\frac{\Gamma_{g,n}^o}{R_{g,n}^o}\xrightarrow[{[P_{g,n}^o]}]{\cong} \Pic^{\ta}(\Mg^k),
$$
from which we deduce the explicit presentation \eqref{E:ex-Pic-op}. 
\end{proof}

\subsection{Weak Franchetta Conjecture and the forgetful homomorphism}\label{S:item3}
The aim of this subsection is to prove Theorem \ref{T:Pic-taut}\eqref{T:Pic-taut3}.

The proof is based on the   ``Weak Franchetta Conjecture", which  is the computation of the relative Picard of the universal curve $\pi: \mt C^k_{g,n}\to \mt M^k_{g,n}$:
$$
\RPic(\Cg^k):=\Pic(\Cg^k)/\pi^*\Pic(\Mg^k).
$$
%For later use, we will review in this subsection the statement of the Weak Franchetta Conjecture (extending it to all values of $g,n\geq 0$) and deduce some consequences of it. 

Note that when $\Mg^k$ is generically a scheme (which happens precisely when $g+n\geq 3$),  the restriction to the generic fiber $\cC_{\eta_{g,n}}\to \eta_{g,n}=\Spec  k(\Mg^k)$ of $\pi: \Cg^k\to \Mg^k$  induces an isomorphism 
(see \cite[Prop. 2.3.2]{FV22})
\begin{equation}\label{E:Picgencur}
 \RPic(\Cg^k)\xrightarrow{\cong} \Pic(\cC_{\eta_{g,n}}).
\end{equation}

\begin{teo}[Weak Franchetta Conjecture]\label{franchetta}
Let $g,n\geq 0$ such that $2g-2+n>0$. 
 The group $\RPic(\mathcal C_{g,n}^k)$ is generated by the relative dualizing line bundle $\omega_{\pi}$ and the line bundles 
$\{\oo(\sigma_1), \ldots, \oo(\sigma_n)\}$ associated to the universal sections $\sigma_1,\ldots,\sigma_n$, subject to the following relations:
\begin{itemize}
\item if $g=1$ then $\omega_{\pi}=0$;
\item if $g=0$ then $\oo(\sigma_1)=\ldots= \oo(\sigma_n)$ and $\omega_{\pi}=\oo(-2\sigma_1)$.
\end{itemize}
\end{teo}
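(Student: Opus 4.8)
The plan is to prove the Weak Franchetta Conjecture by induction on the number $n$ of marked points, using the known base cases and the forgetful maps to reduce everything to understanding what happens when one adds a point. First I would treat the low-genus base cases directly: for $g=0$ the stack $\mathcal M_{0,n}^k$ is a well-understood rational variety (indeed a scheme once $n\geq 3$) and the universal curve is explicitly a blow-up of a projective bundle, so $\RPic$ can be computed by hand, recovering $\oo(\sigma_1)=\dots=\oo(\sigma_n)$ and $\omega_\pi=\oo(-2\sigma_1)$; for $g=1$ one uses that $\mathcal M_{1,1}^k$ carries the universal elliptic curve, for which $\omega_\pi$ is trivial (the relative dualizing sheaf of an elliptic curve is $\oo$), and then bootstraps to $\mathcal M_{1,n}^k$; for $g\geq 2$ the base case $n=0$ is the statement that $\RPic(\mathcal C_g^k)$ is freely generated by $\omega_\pi$, which is precisely the content of Schröer's theorem \cite{Sch03} (the paper notes it extends that proof to $g<3$, so one invokes \cite{Sch03} for $g\geq 3$ and supplies the extension for $g=2$).

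The inductive step is the heart of the argument. Given the result for $(g,n)$, consider the forgetful morphism $\pi:\mathcal C_{g,n}^k=\mathcal M_{g,n+1}^k\to \mathcal M_{g,n}^k$ forgetting the last point (and stabilizing); its fibers are the $n$-pointed curves, and adding a further point to the universal curve over $\mathcal M_{g,n+1}^k$ realizes $\mathcal C_{g,n+1}^k$. The key observation is a standard exact sequence relating $\Pic(\mathcal M_{g,n+1}^k)$ to $\Pic(\mathcal M_{g,n}^k)$ via the relative Picard group of $\pi$, together with the fact that the extra boundary divisors created by the stabilization are of the form $\delta_{0,\{i,n+1\}}$, which in $\RPic$ terms translate into the classes $\oo(\sigma_i)$ and $\oo(\sigma_{n+1})$. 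Concretely one shows $\Pic(\mathcal M_{g,n+1}^k)$ is generated over $\pi^*\Pic(\mathcal M_{g,n}^k)$ by $\omega_\pi$ and $\oo(\sigma_1),\dots,\oo(\sigma_{n+1})$ — since these cut out exactly the relative Picard — and that the only relations are those inherited from the $(g,n)$ case pulled back plus the genus-specific ones; the comparison isomorphism \eqref{E:Picgencur} with the generic fiber lets one check that no unexpected relations appear, because a line bundle on $\mathcal C_{g,n+1}^k$ trivial on the generic fiber of $\mathcal C_{g,n+1}^k\to \mathcal M_{g,n+1}^k$ comes from the base. The Arbarello–Cornalba pull-back formulas \cite[Lemma 3.1]{AC98} for the tautological classes under forgetful maps are exactly what bookkeeps the $\psi$-classes and boundary classes through this step.

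I expect the main obstacle to be twofold. First, extending Schröer's proof to $g=2$ (and ensuring it really works in all characteristics, in particular $p=2$): Schröer's argument for $g\geq 3$ uses properties of the moduli space and of curves of genus at least $3$ that may fail or require modification in genus $2$, where one has the hyperelliptic involution everywhere and $\mathcal M_2^k$ is special; one must check the relevant vanishing/generation statements survive. Second, controlling the relations precisely in the inductive step — it is easy to see the listed classes generate $\RPic$, but ruling out extra relations (equivalently, proving the presented group has the right rank) requires either a dimension count against the known rational Picard group of $\mathcal M_{g,n}^k$ from \cite{AC98, Mor01} or an explicit construction of enough curves in $\mathcal M_{g,n+1}^k$ on which the candidate generators have independent degrees. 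The cleanest route is probably to combine the surjectivity (which is soft) with the rank computation over $\bbQ$ already available from \cite{AC98}, so that freeness modulo the stated torsion-type relations follows formally, exactly as in the proof of Theorem \ref{T:Pic-taut}\eqref{T:Pic-taut1}.
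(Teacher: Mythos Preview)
Your approach differs substantially from the paper's, and the inductive step contains a real gap. The paper does \emph{not} induct on $n$: for each $(g,n)$ with $g+n\geq 3$ it works directly with the generic fiber $\cC_{\eta_{g,n}}$ over $K=k(\Mg^k)$ and runs Schr\"oer's three-step argument (compute $\ker\phi$ by one degeneration; show $\coker\phi$ is torsion-free by degenerating to a compact-type curve built from ordinary elliptic curves $E/k(T)$ with $\Pic^0_E(k(T))=0$; show $\coker\phi$ is torsion via the rational Picard group from \cite{AC98,Mor01}). The remaining cases $(g,n)=(1,1),(2,0)$ are treated separately via an equivariant approximation, because these stacks are not generically schemes and the isomorphism \eqref{E:Picgencur} fails --- a subtlety your proposed base cases do not address.

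Your inductive step is the main problem. What you describe --- generating $\Pic(\cM_{g,n+1}^k)$ over $\pi^*\Pic(\Mg^k)$ by $\omega_\pi$ and the $\oo(\sigma_i)$ --- is precisely how the paper \emph{uses} the Weak Franchetta Conjecture to prove Theorem~\ref{T:Pic-taut}\eqref{T:Pic-taut3}, so invoking it here is circular. A genuine induction from $(g,n)$ to $(g,n+1)$ must compare $\Pic(C)$ with $\Pic(C\times_K K')$ for $K'=K(C)$; via the Albanese property this amounts to knowing $\End_K(J(\cC_{\eta_{g,n}}))=\bbZ$, a nontrivial input you never mention. You also have the difficulty reversed: surjectivity of $\phi$ is not soft. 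Surjectivity after $\otimes\,\bbQ$ does follow from \cite{AC98,Mor01}, but the integral statement requires that $\coker\phi$ be torsion-free, which rests on the vanishing $\Pic^0_{\cC_\eta}(K)_{\tor}=0$. That vanishing is the delicate heart of Schr\"oer's argument and needs his Halphen-pencil construction; it is not recovered by any rank count.
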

The above result was proved for $g\geq 3$ by Arbarello-Cornalba \cite{AC87} in characteristic zero and by Schr\"oer \cite{Sch03} in arbitrary characteristic. 
Moreover, the result for $\RPic(\mathcal C^k_{g,n})_{\bbQ}$ follows   from the computation of $\Pic(\ov \cM_{g,n}^k)_{\bbQ}$ performed by Arbarello-Cornalba \cite{AC98} in characteristic zero and by Moriwaki \cite{Mor01} in positive characteristic. 

%When the base field $k$ has characteristic zero and $2g-2+n>0$ (which is equivalent to $\Mg$ being a DM stack), the above result follows from the explicit computation of $\Pic(\ov \cM_{g,n})$ performed by Arbarello-Cornalba in \cite{AC98} (see also \cite{AC87} for $g\geq 3$), from which it is easy to deduce an explicit description of $\Pic(\Mg)$ and of $\Pic(\Cg)$, and to infer the  result.  Over a base field $k$ of positive characteristic, the above Theorem  is proved by  Schr\"oer \cite{Sch03} assuming that $g\geq 3$. 

\begin{proof}
We will distinguish two cases, according to whether or not $\Mg^k$ is generically a scheme. 

\fbox{Case I: $g+n\geq 3$}

Under this assumption, using the isomorphism \eqref{E:Picgencur}, we have to show  that 
 $\Pic(\cC_{\eta_{g,n}})$ is generated by the dualizing line bundle $\omega_{\cC_{\eta_{g,n}}}$ and the line bundles $\{\oo(p_1),\ldots, \oo(p_n)\}$ associated to the marked points 
 $\{p_1:=\sigma_1(\eta_{g,n}), \ldots, p_n:=\sigma_n(\eta_{g,n})\}$, subject to the following relations:
\begin{itemize}
\item if $g=1$ then $\omega_{\cC_{\eta_{g,n}}}=0$;
\item if $g=0$ then $\oo(p_1)=\ldots= \oo(p_n)$ and $\omega_{\cC_{\eta_{g,n}}}=\oo(-2p_1)$.
\end{itemize}

The case  $g=0$ is easy. Indeed, the generic curve $\cC_{\eta_{0,n}}$ is a geometrically connected smooth projective curve of genus $0$ with at least one $K:=k(\cM_{0,n}^k)$-rational point (since $n\geq 1$), and hence $\cC_{\eta_{0,n}}$ is isomorphic (over $K$) to $\bbP^1_{K}$. Now the result follows from the fact that $\Pic( \bbP^1_{K})$ is infinite cyclic generated by $\oo(1)$ and the fact that $\oo(p_i)=\oo(1)$ and $\omega_{\cC_{\eta_{0,n}}}=\oo(-2)$.  

The case  $g\geq 1$ is essentially due to Schr\"oer \cite{Sch03}, although he assumes the stronger hypothesis that $g\geq 3$ (probably in order to avoid the forbidden case $(g,n)=(2,0)$). Let us briefly review the essential ingredients in his proof in order to convince the reader that the  proof of loc. cit. works under the weaker hypothesis that $g+n\geq 3$. 
Consider the free abelian group  $P:=\bbZ \oo(p_1)\oplus \ldots\oplus \bbZ \oo(p_n)\oplus \bbZ \omega_{\cC_{\eta_{g,n}}}$ and the tautological morphism 
\begin{equation}\label{E:phi-tau}
\phi: P:=\bbZ \oo(p_1)\oplus \ldots\oplus \bbZ \oo(p_n)\oplus \bbZ \omega_{\cC_{\eta_{g,n}}}\to \Pic(\cC_{\eta_{g,n}}).  
\end{equation}
Now the proof is divided in three steps.

\un{Step I:} The kernel of $\phi$ is equal to  
$$\ker(\phi)=
\begin{cases} 
\bbZ \omega_{\cC_{\eta_{g,n}}} & \text{ if } g=1, \\
\{0\} & \text{ if } g\geq 2.
\end{cases}$$

This follows from the existence of a stable $n$-marked genus-$g$ curve $(X, x_1,\ldots, x_n)$  over a field $F\supset k$ whose normalization $\nu:\wt X\to X$  is an elliptic curve with the property that the line bundles $\{\nu^*\omega_X, \nu^*\oo(x_1),\ldots, \nu^*\oo(x_n)\}$ if $g\geq 2$ (resp.  the line bundles $\{\nu^*\oo(x_1),\ldots, \nu^*\oo(x_n)\}$ if $g=1$) are linearly independent in $\Pic(\wt X)$; see the construction in \cite[p. 379]{Sch03}.

\un{Step II:} The cokernel of $\phi$ is torsion-free. 

This is proved in \cite[Prop. 4.1]{Sch03} using a degeneration argument  towards the curve $X$ mentioned in Step I together with the fact that $\Pic^o_{\cC_{\eta_{g,n}}}(\eta_{g,n})$ is torsion-free. This last property is proved  in \cite[Prop. 3.1]{Sch03} by a degeneration argument towards a stable $n$-marked genus-$g$ curve $(Y, y_1,\ldots, y_n)$ of compact type over  $k(T)$ with the property that $\Pic^o_Y$ is an ordinary abelian variety such that $\Pic^o_Y(k(T))=0$. 
 The curve $(Y,y_1,\ldots, y_n)$ is constructed in \cite[p. 377]{Sch03}  by gluing nodally to $\bbP^1_{k(T)}$, with $n$-marked points, $g$ copies of an ordinary elliptic curve $E$ over  $k(T)$ such that $\Pic^o_E(k(T))=0$ (whose existence is showed in \cite[Prop. 3.2]{Sch03} using special Halphen pencils). Note that the curve $(Y,p_1,\ldots, p_n)$ is stable if and only if $g+n\geq 3$  (the stronger condition $g\geq 3$ mentioned  in \cite[p. 377]{Sch03} is not needed!).

\un{Step III:} The cokernel of $\phi$ is torsion.

Indeed, take a line bundle $L\in \Pic(\cC_{\eta_{g,n}})$. Because of the isomorphism \eqref{E:Picgencur}, we can find a line bundle $\cL$ on $\Cg^k$ such that $\cL_{|\cC_{\eta_{g,n}}}=L$. Using that  $\Pic(\ov \cM_{g,n}^k)_{\bbQ}$ is generated by the tautological line bundles (in arbitrary characteristic) as proved in  \cite{AC98} and \cite{Mor01}, it follows that there exists an integer $m$ such that 
$$m\cL=a\lambda+\sum_{i=1}^n b_i \psi_i + \sum_{i=1}^n c_i \oo(\sigma_i) +d \omega_{\pi}.$$ 
By restricting to the generic curve $\cC_{\eta_{g,n}}$, we get 
$$mL=\sum_{i=1}^n (b_i+c_i)\oo(p_i) +d\omega_{\cC_{\eta_{g,n}}}, $$
which shows that $mL\in \Im(\phi)$ and it concludes the proof of Step III. 

\fbox{Case II: $(g,n)=(1,1)$ and $(2,0)$}

In this case, the stack $\Mg^k$ is a DM stack with a non-trivial generic residue gerbe (hence it is not generically a scheme). However, a small modification of the argument in Case I still works. First of all, we can replace the stack with a scheme. More precisely, there exists a cartesian diagram of stacks
$$
\begin{tikzcd} 
\overline C\ar{r}{f}\ar{d}{\mathrm{ap}}  &\overline{\cC}_{g,n}^k \ar{d}{\overline{\pi}}&\Cg^k\ar{d}{\pi}\ar[hook']{l} \\
\overline M\ar{r}&\Mgb^k& \Mg^k \ar[hook']{l}
\end{tikzcd}
$$
such that:
\begin{enumerate}[(i)]
\item $\overline{\pi}:\overline{\cC}_{g,n}^k\to \Mgb^k$ is the universal curve,
\item $\overline M$ is quasi-projective smooth $k$-variety; 
\item there is an isomorphism  $\RPic(\Cg^k)\xrightarrow{\cong} \Pic(C_\eta)$, where the isomorphism is induced by the restriction to the generic fibre $C_\eta$ of the family $\mathrm{ap}$;  
\item the morphism $\Hom_k(\Spec(K),\ov M)\twoheadrightarrow\Hom_k(\Spec(K),\Mgb^k)$ of sets is surjective, for any field $K\supseteq k$.
\end{enumerate}
The first two points follows by \cite{EG98}, since $\Mgb^k$  is a quotient stack (the variety $\ov M$ is called in loc. cit. an equivariant approximation of $\Mgb^k$). The third one by \cite[Def.-Prop. 2.2(3)]{FP16}, because $K$ is an extension of the algebraically closed base field $k$, hence infinite. The strategy of \cite{Sch03}, i.e. Case I, still applies if we replace the universal curve $\overline{\pi}:\overline{\cC}_{g,n}^k\to \Mgb^k$ with the family $\mathrm{ap}:\ov C\to\ov M$. The property (iv) guarantees that all the curves in \emph{loc. cit.} can be lifted on the family $\mathrm{ap}$. The other changes needed are the choice of the stable curves in Step I and II. More precisely: we define the tautological morphism 
\begin{equation}\label{E:phi-tau2}
\phi: P:=\bbZ \oo(p_1)\oplus \ldots\oplus \bbZ \oo(p_n)\oplus \bbZ \omega_{C_{\eta}}\to \Pic(C_{\eta})
\end{equation}
as in \eqref{E:phi-tau}, and we need to prove that it is an isomorphism.

\un{Step I:} The kernel of $\phi$ is equal to  
$$\ker(\phi)=
\begin{cases} 
\bbZ \omega_{C_\eta} & \text{ if } (g,n)=(1,1), \\
\{0\} & \text{ if } (g,n)=(2,0).
\end{cases}$$
If $(g,n)=(1,1)$, the argument of Step I in the previous case still applies if we use the curve $(X,x_1,\ldots,x_n):=(E,p)$, where $E$ is the elliptic curve of \cite[Prop. 3.2]{Sch03} with the unique rational point $p$. If $(g,n)=(2,0)$, we need to use the curve $(X,x_1,\ldots,x_n):=(E\cup_p E)$, obtained by attaching two copies of $E$ nodally along the  point $p$.

\un{Step II:} The cokernel of $\phi$ is torsion-free.

If $\eta\in \ov M$ is the generic point, the group $\Pic^o_{\ov C}(\eta)$ is torsion-free. Indeed, the proof of \cite[Prop. 3.1]{Sch03} still works if we use the smooth marked curve $(E,p)$, when $(g,n)=(1,1)$, and the stable curve $E\cup_p E$, when $(g,n)=(2,0)$. Then, the cokernel of $\phi$ is torsion-free by repeating the degeneration argument of \cite[Prop. 4.1]{Sch03} applied at the same curves as above.

\un{Step III:} The cokernel of $\phi$ is torsion. 

It goes through with no changes.

\end{proof}

\begin{rmk}
Let us point out that the Weak Franchetta conjecture holds true also for the pairs $(g,n)$ such that $2g-2+n\leq 0$ (and this case was needed in \cite{FV22}). 
In particular, $\RPic(\mathcal C_{0,0}^k)$ is freely generated by $\omega_{\pi}$.
Indeed:

\begin{itemize}

\item If $(g,n)=(1,0)$ then it follows from \cite[Thm. 1.1]{FO} that the Picard group of $\cM_{1,1}^k=\cC_{1,0}^k$ is a cyclic group of order $12$ generated by the Hodge line bundle $\lambda:=\pi_*(\omega_{\pi})$. 
Since the Hodge line bundle $\lambda$ on $\cM_{1,1}^k$  is the pull-back of the Hodge line bundle on $\cM_{1,0}^k$, we conclude that $\RPic(\mathcal C_{1,0}^k)=0$.

%REMARK on the non-projectivity of $\pi:\cC_{1,0}\to \cM_{1,0}$
%\begin{rmk}\label{R:M10}
%Note that the group $\RPic(\mathcal C_{1,0})$ is trivial which implies that $\pi:\cC_{1,0}\to \cM_{1,0}$ is a proper family of genus one curves which does not admit a relatively ample line bundle. This also follow from the fact that $\pi:\cC_{1,0}\to \cM_{1,0}$ is representable by algebraic spaces but not by schemes, as it follows from the examples  of Raynaud \cite[XIII, 3.2]{Ray} (see also \cite{Zom}).
%\end{rmk}

\item If $g=0$ and $n=0,1,2$ then the universal family $\pi:\Cg^k\to \Mg^k$ admits the following presentation 
$$
 [\bbP^1/G] \to \cB G \quad \text{ with }
 G=\begin{cases}
\PGL_2  & \text{ if } g=0 \:  \text{ and }\: n=0, \\
B\cong  \Ga\rtimes \Gm & \text{ if } g=0 \:  \text{ and }\: n=1, \\ 
T\cong \Gm & \text{ if } g=0 \:  \text{ and }\: n=2. \\ 
\end{cases}
$$
where $B$ is the Borel subgroup of $\PGL_2$ consisting of upper triangular matrices,  $T$ is the maximal torus of $\PGL_2$ consisting of diagonal matrices and the action of $\PGL_2$ on $\bbP^1$ is the natural one. 

%Consider the natural action of $\PGL_2$ on $\bbP^1$. Denote by $B$ the algebraic subgroup of $\PGL_2$ consisting of the upper triangular matrices and by $T$ the algebraic subgroup  of $\PGL_2$ consisting of triangular matrices. Note that $B$ is a Borel subgroup of $\PGL_2$ which is isomorphic to $\Ga\rtimes \Gm$; while $T$ is a maximal torus of $\PGL_2$ which is isomorphic to $\Gm$. 

From the above explicit presentation of the universal family, it follows that $\Pic(\cM_{0,n}^k)$ is isomorphic to the group $\Lambda^*(G):=\Hom(G,\Gm)$ of characters  of $G$ while $\Pic(\cC_{0,n}^k)$ is isomorphic to the group $\Pic^G(\bbP^1)$ of $G$-linearized line bundles on $\bbP^1$. Moreover, there is a natural exact sequence 
$$
0\to \Lambda^*(G)\xrightarrow{\alpha} \Pic^G(\bbP^1) \xrightarrow{\beta} \Pic(\bbP^1), 
$$
where the map $\alpha$ can be identified with the pull-back map $\pi^*:\Pic(\cM_{0,n}^k)\to \Pic(\cC_{0,n}^k)$ and the image of $\beta$ consists of all the line bundles of $\bbP^1$ that admit a $G$-linearization.  We conclude by using the well-known fact that $\omega_{\bbP^1}=\cO(-2)$ admits a $\PGL_2$-linearization while 
$\cO(1)$ admits a $B$-linearization (and hence also a $T$-linearization) but not a $\PGL_2$-linearization.

%$$
%\Im(\beta)=
%\begin{cases}
%\langle \cO(2) \rangle & \text{if }n=0,\\
%\langle \cO(1) \rangle & \text{if }n=1,2.
%\end{cases}
%$$
\end{itemize}
\end{rmk}

Using the Weak Franchetta Conjecture, we can now give a proof of Theorem \ref{T:Pic-taut}\eqref{T:Pic-taut3}.

\begin{proof}[Proof of Theorem  \ref{T:Pic-taut}\eqref{T:Pic-taut3}]
For any hyperbolic pair $(g,m)$, the stack $\cM^k_{g,m+1}$ is an open substack of $\cC^k_{g,m}$ whose complement is equal to 
$$
\cC^k_{g.m}\setminus \bigcup_{i=1}^m \Im(\sigma_i),
$$
where $\sigma_i$ are the  sections of the universal family $\pi:\cC_{g.m}^k\to \cM^k_{g.m}$. 
Since the stack $\cC^k_{g,m}$ is smooth, we have an exact sequence (see \cite[Prop. 1.9]{PTT15}) 
$$0 \to \Sigma_{g,m}:=\langle \cO(\sigma_1), \ldots, \cO(\sigma_m)\rangle \to \Pic(\cC^k_{g,m})\to \Pic(\cM^k_{g,m+1})\to 0.$$
Consider now the morphism $F_m:\cM^k_{g,m+1}\to \cM^k_{g,m}$ that forgets the last marked point. 
By applying the snake lemma to the commutative diagram with exact rows 
$$\begin{tikzcd}
0 \ar[r] & 0 \ar[r] \ar[d]  & \Pic(\cM^k_{g,m}) \ar[equal]{r} \arrow[hook]{d}& \Pic(\cM^k_{g,m}) \ar{r} \ar{d}{F_m^*}& 0 \\
0 \ar[r] & \Sigma_{g,m} \ar[r] &  \Pic(\cC^k_{g,m})\ar[r] & \Pic(\cM^k_{g,m+1})\ar[r] & 0
\end{tikzcd}$$ 
we obtain the following exact sequence 
$$
0\to \ker(F_m^*)\to \Sigma_{g,m} \to \RPic(\cC^k_{g,m})\to \coker(F_m^*)\to 0.
$$
Theorem \ref{franchetta} implies (using the hypothesis that $g\geq 1$) that the line bundles $\{\cO(\sigma_1),\ldots, \cO(\sigma_m)\}$ are linearly independent in $\RPic(\cC^k_{g,m})$; hence the map $\Sigma_{g,m}\to \RPic(\cC^k_{g,m})$ is injective, and we get
\begin{equation}\label{E:kerF}
\ker(F_m^*)=\{0\}.
\end{equation}
Moreover, again from Theorem \ref{franchetta} with $g\geq 1$, we get that  the cokernel of the map $\Sigma_{g,m}\to \RPic(\cC^k_{g,m})$ is zero if $g=1$ and it is free of rank one generated by the restriction of the relative dualizing sheaf $\omega_{\cC^k_{g,m}/\cM^k_{g,m}}$ if $g\geq 2$. Using that the restriction of $\omega_{\cC^k_{g,m}/\cM^k_{g,m}}$ to $\cM^k_{g,m+1}$ is equal to $\psi_{m+1}$ (see \cite[Thm. 4.1(d)]{Knu3}), we obtain that 
\begin{equation}\label{E:cokerF}
\coker(F_m^*)=
\begin{cases} 
\{0\} & \text{ if } g=1,\\
\bbZ\langle \psi_{m+1} \rangle & \text{ if } g\geq 2. 
\end{cases}
\end{equation}
Using that $F=F_{n-1}\circ \ldots \circ F_0:\Mg^k\to \cM_g^k$ if $g\geq 2$ (resp. $F=F_{n-1}\circ \ldots \circ F_1:\cM^k_{1,n}\to \cM^k_{1,1}$ if $g\geq 1$) and putting together \eqref{E:kerF} and \eqref{E:cokerF}, we get that the pull-back $F^*$ on the Picard groups is injective and that 
\begin{equation}\label{E:cokerpull}
\begin{sis}
& \frac{\Pic(\Mg^k)}{F^*\Pic(\cM_g^k)}=\bbZ\langle \psi_1\rangle \oplus \ldots  \oplus \bbZ\langle \psi_n\rangle & \text{ if } g\geq 2, \\
& \frac{\Pic(\cM^k_{1,n})}{F^*\Pic(\cM^k_{1,1})}=\{0\} & \text{ if } g=1.
\end{sis}
\end{equation}
Comparing \eqref{E:cokerpull} with Theorem \ref{T:Pic-taut}\eqref{T:Pic-taut2}, we get that 
\begin{equation}\label{E:cokeriso}
\begin{sis}
& \frac{\Pic^{\ta}(\Mg^k)}{F^*\Pic^{\ta}(\cM^k_g)}\xrightarrow{\cong} \frac{\Pic(\Mg^k)}{F^*\Pic(\cM^k_g)}& \text{ if } g\geq 2, \\
&  \frac{\Pic^{\ta}(\cM^k_{1,n})}{F^*\Pic^{\ta}(\cM^k_{1,1})}= \frac{\Pic(\cM^k_{1,n})}{F^*\Pic(\cM^k_{1,1})}=\{0\} & \text{ if } g=1.
\end{sis}
\end{equation}
By the snake lemma, the above isomorphisms imply that \eqref{E:pull-iso} holds true, and we are done.

\end{proof}

\subsection{Torsion subgroup of $\Pic(\Mgb^k)$}\label{S:item4}

The aim of this subsection is to prove Theorem \ref{T:Pic-taut}\eqref{T:Pic-taut4}. Recall that we are assuming that $k=\ov k$ is algebraically closed, throughout this section.

%Let us first study the $k$-group scheme $\PIC^{\tau}_{\Mgb^k/k}$ and the torsion subgroup $\Pic(\Mgb^k)_{\tor}\subset \Pic(\Mgb^k)$. 

A crucial ingredient in the proof is that $\Mgb^k$ is \emph{algebraically simply connected}, i.e. any  connected \emph{cover} of $\Mgb^k$ (i.e.  finite and \'etale representable morphism $f:\cX\to \Mgb^k$ with $\cX$ connected)  is trivial (i.e. $f:\cX\to\Mgb^k$ is an isomorphism)\footnote{this is equivalent to the fact that the \'etale fundamental group $\pi_1^{et}(\Mgb^k,\ov x)$ (with respect to some  geometric point $\ov x$) vanishes; see \cite{Noo04} for the definition and main properties of the \'etale fundamental group of an algebraic stack.}.

%Vecchia frase
%that its \'etale fundamental group $\pi_1^{et}(\Mgb^k,\ov x)$ (with respect to some  geometric point $\ov x$) vanishes (see \cite{Noo04} for the definition of the \'etale fundamental group of an algebraic stack).  From the definition of $\pi_1^{et}(\Mgb^k,\ov x)$, it follows that $\pi_1^{et}(\Mgb^k,\ov x)=0$ if and only if any  connected \emph{cover} of $\Mgb^k$ (i.e.  finite and \'etale representable morphism $f:\cX\to \Mgb^k$ with $\cX$ connected)  is trivial (i.e. $f:\cX\to\Mgb^k$ is an isomorphism).

\begin{teo}\label{T:simply-con}
The stack $\Mgb^k$ is algebraically simply connected.
% i.e.  any connected cover of $\Mgb^k$ is trivial.  
\end{teo}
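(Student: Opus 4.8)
The plan is to deduce algebraic simple connectedness of $\Mgb^k$ from the topological simple connectedness of $\Mgb^{\an}$ (Proposition \ref{P:simplycon}), using the comparison between the \'etale and topological fundamental groups of the associated complex analytic stack, and then to descend from $\bbC$ to an arbitrary algebraically closed field by spreading out and specialization. Concretely, I would first treat the case $k=\bbC$: a connected cover $f:\cX\to\Mgb^{\bbC}$ gives, after analytification, a connected finite topological cover $f^{\an}:\cX^{\an}\to\Mgb^{\an}$ of the DM topological stack (in the sense of \cite{Noo}); since $\Mgb^{\an}$ is topologically simply connected, $f^{\an}$ is trivial, hence $f$ has degree one, hence $f$ is an isomorphism. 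One needs the Riemann existence theorem for DM stacks here --- that finite \'etale covers of a complex algebraic DM stack correspond to finite topological covers of its analytification --- which follows from the scheme-theoretic statement by passing to an \'etale atlas (or one can cite Noohi's comparison results).

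Next, I would pass from $\bbC$ to an arbitrary algebraically closed field $k$ of characteristic zero by the Lefschetz principle: since $\Mgb$ is of finite type over $\bbZ$, any connected cover of $\Mgb^k$ is defined over a finitely generated subfield, which embeds into $\bbC$, and the formation of the \'etale fundamental group is insensitive to algebraically closed base field extension in characteristic zero (Noohi's analogue of \cite[Exp.~X]{SGA1} for algebraic stacks, \cite{Noo04}); so $\pi_1^{et}(\Mgb^k,\ov x)=\pi_1^{et}(\Mgb^{\bbC},\ov x)=1$. For $k$ of characteristic $p>0$, I would use the smooth proper morphism $\Mgb\to\Spec\bbZ$ (Fact \ref{F:Mgnbar}) together with the specialization map for \'etale fundamental groups of proper smooth schemes/stacks over a henselian base: lifting to $W(k)$ and using that $\Mgb^{W(k)}\to\Spec W(k)$ is proper and smooth, the specialization homomorphism $\pi_1^{et}(\Mgb^k)\twoheadrightarrow$ (its prime-to-$p$ completion) factors through $\pi_1^{et}$ of the generic geometric fibre, which is trivial by the characteristic zero case; this kills the prime-to-$p$ part. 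The wild ($p$-power) part is exactly the possible extra quotient, which is why this theorem only controls torsion in $\Pic(\Mgb^k)_{\tor}$ up to a $p$-group, consistent with Theorem \ref{T:Pic-taut}\eqref{T:Pic-taut4b}; I expect the clean statement is that $\pi_1^{et}(\Mgb^k)$ is a pro-$p$ group (in fact trivial, but proving full triviality in characteristic $p$ is not needed downstream and may be harder).

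The main obstacle is the foundational input on fundamental groups of algebraic stacks: one must have at hand (i) the Riemann existence theorem / \'etale-topological comparison for complex DM stacks, (ii) invariance of $\pi_1^{et}$ under extension of algebraically closed fields in characteristic zero, and (iii) the specialization surjection for $\pi_1^{et}$ of a proper smooth morphism of stacks over a strictly henselian (or complete) DVR. Each of these is the stack-theoretic analogue of a classical scheme statement and is available in the literature (Noohi \cite{Noo04, Noo} for the topological comparison and the stacky SGA1-type results), so the proof is essentially a matter of assembling these citations in the right order rather than proving something genuinely new; the only subtlety requiring care is the behaviour at residue characteristic $p$, where one should be content with a pro-$p$ statement and phrase the conclusion accordingly so that it feeds correctly into the splitting \eqref{E:Picsplit}.
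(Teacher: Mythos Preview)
Your characteristic-zero argument matches the paper's: Riemann existence for DM stacks reduces $k=\bbC$ to Proposition~\ref{P:simplycon}, and invariance of covers under extension of algebraically closed fields handles general $k$ of characteristic zero.

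In characteristic $p$, however, there is a genuine gap. You invoke a specialization homomorphism for $\pi_1^{et}$ of proper smooth morphisms of stacks, citing \cite{Noo04}, but the paper explicitly notes (Remark~\ref{R:specia}) that it is \emph{not aware} of such an extension of the SGA1 specialization theory to stacks, even DM stacks. So you are appealing to a black box that may not exist. The paper circumvents this by proving the needed statement directly: it lifts an arbitrary connected cover of $\Mgb^k$ to $\Mgb^{W(k)}$ step by step along the square-zero thickenings $\Mgb^{W_n(k)}\hookrightarrow\Mgb^{W_{n+1}(k)}$ (topological invariance of the \'etale site), algebraizes via Grothendieck's existence theorem for stacks, and then uses Stein factorization over $\Spec W(k)$ to show the lifted cover has connected geometric generic fibre, hence is trivial by the characteristic-zero case.

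You also have the direction of specialization backwards. For schemes, the specialization map $\pi_1(X_{\bar\eta})\to\pi_1(X_s)$ is \emph{surjective} for proper morphisms with geometrically reduced and connected fibres (SGA1~X, Cor.~2.3), with no prime-to-$p$ restriction; the prime-to-$p$ hypothesis is only needed for injectivity. Thus, if the stacky specialization map existed and the generic geometric fibre were simply connected, the special fibre would be \emph{fully} simply connected, not merely prime-to-$p$ simply connected. Your proposed weakening to ``$\pi_1$ is pro-$p$'' is both unnecessary and weaker than what the theorem asserts and the paper proves.
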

\begin{proof}
We will divide the proof in three cases, according to the field $k$. 
\begin{enumerate}[(a)]
\item If $k=\bbC$ then, by the Riemann existence theorem for stacks (see \cite[Thm. 20.4]{Noo}), the functor $$
\begin{aligned}
\left\{\text{Covers of } \Mgb^{\bbC} \right\} & \longrightarrow \left\{\text{Finite (topological) covers of } \Mgb^{\an}  \right\} \\
\cX\to \Mgb^{\bbC} & \mapsto \cX^{\an} \to \Mgb^{\an}
\end{aligned}
$$
is an equivalence of categories. Hence the result follows from the fact that $\Mgb^{\an}$ is (topologically) simply connected by Proposition \ref{P:simplycon}.

%  the result follows from the fact that $\pi_1^{et}(\Mgb^{\bbC},\ov x)$, for $\ov x\in \Mgb(\bbC)$, is the profinite completion of the topological fundamental group $\pi_1^{top}(\Mgb^{\an},\ov x)$ (see \cite[Expos\'e XII, Cor. 5.2]{SGA1}), together with the fact that $\Mgb^{\an}$ is (topologically) simply connected by Proposition \ref{P:simplycon}.

\item If $k(=\ov k)$ has characteristic zero, then the result follows from the previous case $k=\bbC$ together with the fact that, for any extensions $k\subseteq k'$ of algebraically closed fields,  the  functor 
$$
\begin{aligned}
\left\{\text{Covers of } \Mgb^k \right\} & \longrightarrow \left\{\text{Covers of } \Mgb^{k'}  \right\} \\
\cX\to \Mgb^k & \mapsto \cX_{k'}:=\cX\times_k k' \to \Mgb^{k'}
\end{aligned}
$$
is an equivalence of categories, a fact that can be proved as in \cite[\href{https://stacks.math.columbia.edu/tag/0A49}{Tag 0A49}]{stacks-project}. 
%fundamental group does not change under extensions of algebraically closed fields (see \cite[\href{https://stacks.math.columbia.edu/tag/0A49}{Tag 0A49}]{stacks-project}).
\item If $k(=\ov k)$ has characteristic $p>0$, then consider the morphism $\Mgb^{W(k)}\to \Spec W(k)$, where $W(k)$ is the ring of Witt vectors over $k$. The conclusion will follow from 
the following two statements.

\begin{itemize}
\item Any (resp. connected) cover of $\Mgb^k$ lifts to a (resp. connected) cover of $\Mgb^{W(k)}$.

The proof of this statement is similar to the one contained in \cite[\href{https://stacks.math.columbia.edu/tag/0A48}{Tag 0A48}]{stacks-project} and it proceeds like this. 
Let $f_1:\cX_1\to \Mgb^k$ be a cover. For any $n\geq 1$, consider the local Artin ring $W_n(k)=W(k)/\mathfrak{m}^n$ where $\mathfrak{m}$ is the maximal ideal of $W(k)$. We get a chain of closed embeddings 
$$
\Mgb^{W_1(k)=k}\hookrightarrow \Mgb^{W_2(k)}\hookrightarrow \Mgb^{W_3(k)}\hookrightarrow \ldots...
$$
where each closed embedding  is defined by a square-zero ideal sheaf. By \cite[\href{https://stacks.math.columbia.edu/tag/039R}{Tag 039R}]{stacks-project} (which can be used since $f_1$ is representable), we can lift $f_1$ to a tower of covers
$$
f_n:\cX_n\to \Mgb^{W_n(k)}
$$
for any $n\geq 1$, such that the restriction of $f_n$ over $\Spec W_{n-1}(k)$ coincides with $f_{n-1}$. Since $W(k)=\varprojlim W_n(k)$, by the Grothendieck's existence existence theorem for algebraic stacks (see \cite[Thm. 1.4]{Ols05}), there exists a finite representable morphism $f:\cX\to \Mgb^{W(k)}$ whose restriction over $\Mgb^{W_n(k)}$ coincides with $f_n$ for any $n\geq 1$. Arguing as in \cite[\href{https://stacks.math.columbia.edu/tag/0A43}{Tag 0A43}]{stacks-project}, 
it follows that $f$ is formally smooth, and hence \'etale. Therefore $f:\cX\to \Mgb^{W(k)}$ is a cover that lifts the initial cover $f_1:\cX_1\to \Mgb^k$.  Clearly, if $f_1$ is connected then also $f$ must be connected. 

\item Any connected cover of $\Mgb^{W(k)}$ is trivial.

Let $f:\cX\to \Mgb^{W(k)}$ be a connected cover. Consider the Stein factorization (see \cite[Thm. 11.3]{Ols07}) of the proper and  surjective morphism $F:\cX\xrightarrow{f} \Mgb^{W(k)}\to S:=\Spec W(k)$:
$$
F:\cX\xrightarrow{g} \un{\Spec}_S(F_*\cO_{\cX}) \xrightarrow{h} S=\Spec W(k).
$$
By \cite[Thm. 11.3]{Ols07}, the morphism $g$ is a surjective and proper morphism with geometrically connected fibers.

On the other hand, the morphism $h$ is a representable (surjective) finite morphism since $F_*\cO_{\cX}$ is a coherent sheaf on $S$ by loc. cit.
Since the morphism $F$ is smooth (hence flat with geometrically reduced fibers), arguing as in \cite[\href{https://stacks.math.columbia.edu/tag/0BUN}{Tag 0BUN}]{stacks-project} we deduce that the morphism $h$ is also \'etale. Furthermore,  $\un{\Spec}_S(F_*\cO_{\cX})$ is connected being the image via the surjective morphism $g$ of the connected stack $\cX$. In other words, the morphism $h$ is a connected cover of $S=\Spec W(k)$. Since $W(k)$ is a strictly Henselian local ring (being a complete local ring with algebraically closed residue field), we deduce that $h$ must be an isomorphism (see e.g. \cite[\href{https://stacks.math.columbia.edu/tag/04GK}{Tag 04GK}]{stacks-project}). We conclude that $F=g$, which implies that $F:\cX\to \Spec W(k)$ has geometrically connected fibers. 

Now, restricting $f$ to the geometric generic fiber $\ov \eta=\Spec \ov{F(W(k))}$, where $\ov{F(W(k))}$ is an algebraic closure of the fraction field $F(W(k))$ of $W(k)$, we obtain a cover $f_{\ov\eta}:\cX_{\ov \eta}\to \Mgb^{F(W(k))}$ which is furthermore connected since $F$ has geometrically connected fibers. Since $\char \ov{F(W(k))}=0$, by what proved above $f_{\ov\eta}$ must be an isomorphism. This implies that $f$ has relative degree $1$, hence it must be an isomorphism.
\end{itemize}

%OLD PROOF
%Since the above morphism is flat, proper, with geometrically integral fibers, by \cite[\href{https://stacks.math.columbia.edu/tag/0C0P}{Tag 0C0P}]{stacks-project} there is a surjective specialization map from the \'etale fundamental group $\pi_1^{et}(\Mgb^{\ov{Q(W(k))}},\ov y)$ of the geometric generic fiber (where $\ov{Q(W(k))}$ is the algebraic closure of the quotient field $Q(W(k))$ of $W(k)$, which has characteristic zero) to the \'etale fundamental group $\pi_1^{et}(\Mgb^{k},\ov x)$. Hence, we conclude using that $\pi_1^{et}(\Mgb^{\ov{Q(W(k))}},\ov y)=0$ by the previous case.
\end{enumerate}
\end{proof}

\begin{rmk}\label{R:specia}
Observe that part (c) of the above proof would  follow from the existence and the surjectivity of a specialization map among the \'etale fundamental groups (as defined in  \cite{Noo04}) of the geometric fibers of a proper flat morphisms of stacks $f:\cX\to S$ with geometrically connected and reduced fibers, extending the well-know case of morphisms of schemes (see e.g. \cite[\href{https://stacks.math.columbia.edu/tag/0C0P}{Tag 0C0P}]{stacks-project}). However, we are not aware of such an extension to morphisms of stacks, not even for DM stacks.  
\end{rmk}

Using the above Theorem \ref{T:simply-con}, we can now study  the $k$-group scheme $\PIC^{\tau}_{\Mgb^k/k}$ and the torsion subgroup $\Pic(\Mgb^k)_{\tor}\subset \Pic(\Mgb^k)$.

\begin{prop}\label{P:tau-tors}
\noindent 
\begin{enumerate}[(i)]
\item \label{P:tau-tors1} The Picard group $\Pic(\Mgb^k)$ coincides with $\NS(\Mgb^k)$ and it is finitely generated.
\item \label{P:tau-tors2} If $\char(k)=0$ then $\PIC^{\tau}_{\Mgb^k/k}$ is the trivial $k$-group scheme and $\Pic(\Mgb^k)$ is a torsion-free group.
\item \label{P:tau-tors3} If $\char(k)=p>0$ then $\PIC^{\tau}_{\Mgb^k/k}$ is a finite $k$-group scheme and we have that 
$$
\Pic(\Mgb^k)_{\tor}=\PIC^{\tau}_{\Mgb^k/k}(k)
$$
is a finite $p$-group.
\end{enumerate}
\end{prop}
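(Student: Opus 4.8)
The plan is to combine the Picard-scheme formalism of Section~\ref{Sec:PicSpa} with the algebraic simple connectedness of $\Mgb^k$ (Theorem~\ref{T:simply-con}). As a preliminary remark, $\Mgb^k\to \Spec k$ is proper, smooth (hence geometrically normal) and geometrically connected, so it is cohomologically flat in degree zero; thus Theorem~\ref{T:repre-Pic}, Proposition~\ref{P:Pico-tau} and Proposition~\ref{P:finit-NS} all apply. In particular $\PIC_{\Mgb^k/k}$ is a commutative group scheme locally of finite type over $k$, the subgroup schemes $\PIC^o_{\Mgb^k/k}\subseteq \PIC^{\tau}_{\Mgb^k/k}$ are \emph{projective} over $k$ (Proposition~\ref{P:Pico-tau}), and $\NS(\Mgb^k)$ is a finitely generated abelian group.

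The key step is that $\Pic(\Mgb^k)$ has no nontrivial element of order prime to $p:=\char(k)$. Indeed, if $L\in \Pic(\Mgb^k)$ had exact order $n$ with $\gcd(n,p)=1$, then, fixing an isomorphism $L^{\otimes n}\xrightarrow{\sim}\cO_{\Mgb^k}$, the associated cyclic cover $\pi\colon \cY:=\un{\Spec}_{\Mgb^k}\!\big(\bigoplus_{i=0}^{n-1}L^{\otimes i}\big)\to \Mgb^k$ is a finite representable morphism of degree $n$, which is \'etale because $n$ is invertible on $\Mgb^k$ and is connected because $L$ has order exactly $n$; this produces a nontrivial connected cover of $\Mgb^k$, contradicting Theorem~\ref{T:simply-con}. (Equivalently, the Kummer sequence gives $\Pic(\Mgb^k)[\ell]\cong H^1_{et}(\Mgb^k,\mu_\ell)\cong \Hom(\pi_1^{et}(\Mgb^k),\bbZ/\ell)=0$ for every prime $\ell\neq p$.) Hence $\Pic(\Mgb^k)_{\tor}=0$ if $\char(k)=0$, and $\Pic(\Mgb^k)_{\tor}$ is a $p$-group if $\char(k)=p>0$.

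Next, I would show $\PIC^o_{\Mgb^k/k}$ is finite. Being a connected projective group scheme over the perfect field $k$, its reduction $A:=(\PIC^o_{\Mgb^k/k})_{\red}$ is a smooth connected projective group scheme, i.e.\ an abelian variety, and $A(k)=\PIC^o_{\Mgb^k/k}(k)=\Pic^o(\Mgb^k)\subseteq \Pic(\Mgb^k)$. If $\dim A>0$ then $A(k)$ contains a nontrivial $\ell$-torsion point for any prime $\ell\neq p$, contradicting the key step; hence $A=0$, so $\PIC^o_{\Mgb^k/k}$ is finite and connected. A connected finite scheme over $k=\ov k$ has a single $k$-point, so $\Pic^o(\Mgb^k)=\PIC^o_{\Mgb^k/k}(k)=\{e\}$, and therefore $\Pic(\Mgb^k)=\Pic(\Mgb^k)/\Pic^o(\Mgb^k)=\NS(\Mgb^k)$ is finitely generated; this proves~\eqref{P:tau-tors1}. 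If $\char(k)=0$, then $\PIC^o_{\Mgb^k/k}$ is also reduced by Cartier's theorem, hence trivial.

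Finally, $\PIC^{\tau}_{\Mgb^k/k}$ is projective (Proposition~\ref{P:Pico-tau}) and has $\PIC^o_{\Mgb^k/k}$ as its identity component, so it is $0$-dimensional, i.e.\ finite over $k$; moreover $\PIC^{\tau}_{\Mgb^k/k}(k)=\Pic^{\tau}(\Mgb^k)$. By \eqref{E:torNS} together with $\Pic^o(\Mgb^k)=0$ from \eqref{P:tau-tors1}, we get $\Pic^{\tau}(\Mgb^k)=\NS(\Mgb^k)_{\tor}=\Pic(\Mgb^k)_{\tor}$, which the key step identifies with a finite $p$-group (trivial in characteristic $0$); this yields the description of $\PIC^{\tau}_{\Mgb^k/k}(k)$ in \eqref{P:tau-tors3}. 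The quotient $\PIC^{\tau}_{\Mgb^k/k}/\PIC^o_{\Mgb^k/k}$ is an \'etale group scheme of finite type over $k=\ov k$, hence the constant group scheme attached to $\NS(\Mgb^k)_{\tor}=\Pic(\Mgb^k)_{\tor}$; taking orders in $0\to \PIC^o_{\Mgb^k/k}\to \PIC^{\tau}_{\Mgb^k/k}\to \PIC^{\tau}_{\Mgb^k/k}/\PIC^o_{\Mgb^k/k}\to 0$ shows that $|\PIC^{\tau}_{\Mgb^k/k}|$ is the product of a power of $p$ (the order of the infinitesimal group scheme $\PIC^o_{\Mgb^k/k}$) and the order of the finite $p$-group $\Pic(\Mgb^k)_{\tor}$, hence a power of $p$; so $\PIC^{\tau}_{\Mgb^k/k}$ is a finite $p$-group scheme, proving \eqref{P:tau-tors3}, while in characteristic $0$ it is reduced (Cartier) with trivial group of $k$-points, hence trivial, proving \eqref{P:tau-tors2}. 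The one delicate point is the cyclic-cover step, where one must check at once that the cover is \'etale (needs $n$ prime to $p$) and connected (needs $L$ of order exactly $n$) so as to contradict Theorem~\ref{T:simply-con}; the remaining assertions are formal consequences of Section~\ref{Sec:PicSpa}.
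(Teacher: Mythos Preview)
Your proposal is correct and follows essentially the same route as the paper: both hinge on the cyclic-cover argument (your ``key step'' is the paper's Claim) to kill prime-to-$p$ torsion via Theorem~\ref{T:simply-con}, then show $(\PIC^o_{\Mgb^k/k})_{\red}$ is a trivial abelian variety, deduce $\Pic^o(\Mgb^k)=0$ so that $\Pic=\NS$ is finitely generated, and finish by combining finiteness of $\PIC^{\tau}$ with Cartier's theorem in characteristic~$0$. The only cosmetic differences are that you add the Kummer-sequence reformulation and the order computation showing $\PIC^{\tau}$ is in fact a $p$-group scheme (slightly more than~\eqref{P:tau-tors3} asserts), while the paper invokes Proposition~\ref{P:smPic}\eqref{P:smPic2} rather than naming Cartier directly.
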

\begin{proof}
Let us first show the following

\un{Claim}: If $L$ is an $s$-torsion line bundle on $\Mgb^k$ such that $\char(k)\not\vert s$ then $L$ is trivial.  

Indeed, denote by $1\leq r\vert s$ the order of $L$ in  $\Pic(\Mgb^k)$. 
%Let $L\in \Pic(\Mgb^k)$ be a line bundle of $r$-torsion and assume that, if $\char(k)=p>0$, then $p$ does not divide $r$. 
The line bundle $L$ gives rise to a $\mu_r$-cyclic connected finite and flat representable morphism 
$$\Mgb^k[\sqrt[r]{L}]:=\Spec_{\cO_{\Mgb^k}}\left(L^0\oplus \ldots \oplus L^{r-1} \right) \to \Mgb^k.$$
 Since $\mu_r$ is \'etale over $k$ by our assumption that $\char(k)$ does not divide $r$, the morphism  $\Mgb^k[\sqrt[r]{L}]\to \Mgb^k$ is also \'etale, hence it is a connected cover of $\Mgb^k$. Since $\Mgb^k$ is algebraically simply connected by Theorem \ref{T:simply-con},  the connected cover $\Mgb^k[\sqrt[r]{L}]\to \Mgb^k$ must be trivial, which forces $r=1$. 
  
\vspace{0.1cm}

Consider the connected component $\PIC^o_{\Mgb^k/k}$ containing the origin of the finite type $k$-group scheme $\PIC^{\tau}_{\Mgb^k/k}$.
%Picard group scheme $\PIC_{\Mgb^k/k}$ (and hence also of its finite type open group subscheme $\PIC^{\tau}_{\Mgb^k/k}$).
Proposition \ref{P:Pico-tau} implies (using that $\Mgb^k$ is $k$-smooth) that $\PIC^o_{\Mgb^k/k}$ is a (connected) projective $k$-group scheme, and hence  that the reduced underlying scheme $(\PIC^o_{\Mgb^k/k})_{\red}$ is an abelian variety over $k$. 
By the above Claim, $(\PIC^o_{\Mgb^k/k})_{\red}(k)$ does not contain non-trivial torsion elements of order coprime with $\char(k)$, and hence it must be trivial. This implies that 
$$\Pic^o(\Mgb^k)=\PIC^o_{\Mgb^k/k}(k)=0,$$ 
from which it follows that 
$$\NS(\Mgb^k)=\Pic(\Mgb^k).$$
Proposition \ref{P:finit-NS} implies therefore that $\Pic(\Mgb^k)$ is finitely generated (which concludes the proof of \eqref{P:tau-tors1}) and that its (finite) torsion subgroup is equal to 
$$\Pic(\Mgb^k)_{\tor}=\Pic^{\tau}(\Mgb^k)=\PIC^{\tau}_{\Mgb^k/k}(k).$$
Moreover, the above Claim implies that $\Pic(\Mgb^k)_{\tor}$ is trivial if $\char(k)=0$ and it is a $p$-group if $\char(k)=p>0$. 

We now conclude the proofs of \eqref{P:tau-tors2} and \eqref{P:tau-tors3} observing that 
\begin{itemize}
\item $\PIC^{\tau}_{\Mgb^k/k}$ is a finite $k$-group scheme, being of finite type over $k$ with a finite connected component $\PIC^{o}_{\Mgb^k/k}$ containing the identity element;
\item $\PIC^{\tau}_{\Mgb^k/k}$ is $k$-\'etale  if $\char(k)=0$, being of $k$-finite and $k$-smooth by Proposition \ref{P:smPic}\eqref{P:smPic2}.  
\end{itemize}
\end{proof}

We  can now give a  proof of Theorem \ref{T:Pic-taut}\eqref{T:Pic-taut4}. 

\begin{proof}[Proof of Theorem \ref{T:Pic-taut}\eqref{T:Pic-taut4}]
From Proposition \ref{P:tau-tors}, it follows that $\Pic(\Mgb^k)_{\tor}$ is trivial if $\char(k)=0$ and it is a finite $p$-group if $\char(k)=p>0$. 

Using parts \eqref{T:Pic-taut1}, \eqref{T:Pic-taut2} and \eqref{T:Pic-taut3} of Theorem \ref{T:Pic-taut}, we get that $\Pic^{\ta}(\Mgb^k)=\Pic(\Mgb^k)$ (which implies that $\Pic(\Mgb^k)_{\tor}=0$ since 
$\Pic^{\ta}(\Mgb^k)$ is torsion-free by part \eqref{T:Pic-taut1}) in each of the following cases:
\begin{itemize}
\item when  $g=0$  (and $n\geq 3$) by \cite{Kee92};
\item when $g=1$ (and $n\geq 1)$  since $\Pic^{\ta}(\cM_{1,1}^k)=\Pic(\cM_{1,1}^k)$ by \cite{FO};
\item when $g=2$  since $\Pic^{\ta}(\cM_{2}^k)=\Pic(\cM_{2}^k)$ by \cite{Vis98} if  $\char(k)\neq 2$\footnote{The hypothesis that $\char(k)$ is also different from $3$ in loc. cit. is only need in the computation of the higher Chow groups of $\cM_2$.}
 and \cite{Ber} if $\char(k)=2$;
\item when $g=3,4,5$  since, for these values of $g$, we have that  $\Pic^{\ta}(\cM_{g}^k)=\Pic(\cM_{g}^k)$ by \cite{diL}.
\end{itemize}

It remains to prove the decomposition \eqref{E:splitPic} (for which we can assume that $g\geq 6$ for otherwise it holds trivially for what said above) and the isomorphism \eqref{E:tor-pull}.
We will divide the proof in three steps.

\un{Step I:} \: \eqref{E:splitPic} holds true for $n=0$.

As observed above, we can (and will) assume that $g\geq 3$. We first claim that the composition 
\begin{equation}\label{E:comp-mor}
\Pic^{\ta}(\ov \cM_g^k)\hookrightarrow \Pic(\ov\cM_g^k)\twoheadrightarrow  \Pic(\ov\cM_g^k)_{\tf}
\end{equation}
is an isomorphism. 

Indeed, since $\Pic^{\ta}(\ov\cM_g^k)$ is torsion-free by part \eqref{T:Pic-taut1}, the  homomorphism \eqref{E:comp-mor} is injective. Furthermore, using that $ \Pic^{\ta}(\ov\cM_g^k)_{\bbQ}=\Pic(\ov\cM_g^k)_{\bbQ}$ by \cite[Thm. 2.2]{AC98} in characteristic zero and \cite{Mor01} in positive characteristic, we deduce that the injective homomorphism \eqref{E:comp-mor} has finite index. 
Hence, in order to prove that \eqref{E:comp-mor} is an isomorphism, it is enough to construct $\dim_{\bbQ}(\Pic(\ov\cM_g^k)_{\bbQ})$ (integral $k$-proper) curves in $\ov\cM_g^k$ such that the intersection matrix of a basis of $\Pic^{\ta}(\ov\cM_g^k)$ with these test curves is invertible. This is proved  in \cite[\S 2]{AC87}, under the assumption that $g\geq 3$.

Now we can finish the proof of Step I. Indeed, the isomorphism in \eqref{E:comp-mor} provides a canonical splitting of the exact sequence 
$$0\to \Pic(\ov\cM_g^k)_{\tor}\to \Pic(\ov\cM_g^k)\to \Pic(\ov\cM_g^k)_{\tf}\to 0,$$
by identifying the torsion-free quotient $\Pic(\ov\cM_g^k)_{\tf}$ with the tautological subgroup $\Pic^{\ta}(\ov\cM_g^k)$. Hence we get the decomposition \eqref{E:splitPic} for $n=0$.

\un{Step II:}\:  If $g\geq 2$ then the pull-back via the forgetful morphism $\ov F$ induces an isomorphism
\begin{equation}\label{E:defect-ta}
[\ov F^*]:\frac{\Pic(\ov\cM_g^k)}{\Pic^{\ta}(\ov\cM_g^k)}\xrightarrow{\cong} \frac{\Pic(\Mgb^k)}{\Pic^{\ta}(\Mgb^k)}.
\end{equation}

Indeed, consider the commutative diagram 
$$\begin{tikzcd}
 \cM_{g,n}^k \ar[hook]{r} \ar{d}{F}& \Mgb^k \ar{d}{\ov F} \\
 \cM_g^k \ar[hook]{r} &\ov\cM_g^k
\end{tikzcd}$$ 
We get an induced commutative diagram of homomorphisms
\begin{equation}\label{E:com-maps}
\begin{tikzcd}
\frac{\Pic(\cM_{g,n}^k)}{\Pic^{\ta}(\cM_{g,n}^k)} & \frac{\Pic(\Mgb^k)}{\Pic^{\ta}(\Mgb^k)}   \ar[swap]{l}{[\res_{g,n}]} \\
\frac{\Pic(\cM_g^k)}{\Pic^{\ta}(\cM_g^k)}   \ar{u}{[F^*]} & \frac{\Pic(\ov\cM_g^k)}{\Pic^{\ta}(\ov\cM_g^k)} \ar[swap]{u}{[\ov F^*]}\ar{l}{[\res_{g}]}
\end{tikzcd}
\end{equation}
where we have used that the pull-back homomorphism $\ov F^*:\Pic(\ov\cM_g^k)\to \Pic(\Mgb^k)$  sends tautological line bundles into tautological line bundles (see e.g. \cite[\S 3]{AC87}).
By parts  \eqref{T:Pic-taut2} and \eqref{T:Pic-taut3} of Theorem \ref{T:Pic-taut}, we know that the homomorphisms $[F^*]$,  $[\res_{g,n}]$ and $[\res_g]$ appearing in \eqref{E:com-maps} are isomorphisms. 
By the commutativity of the diagram \eqref{E:com-maps}, we conclude that  $[\ov F^*]$ is an isomorphism, and we are done.

\un{Step III:} \: \eqref{E:splitPic} holds true for any $n>0$ and  \eqref{E:tor-pull} holds true.

As observed above, we can (and will) restrict to $g\geq 2$ also in the proof of \eqref{E:splitPic}.
% since otherwise $\Pic^{\ta}(\Mgb^k)=\Pic(\Mgb^k)$ (as observed above) and \eqref{E:splitPic} holds trivially. 
Since $\Pic^{\ta}(\Mgb^k)$ is torsion-free by part \eqref{T:Pic-taut1}, the natural homomorphism 
\begin{equation}\label{E:alphagn}
\alpha_{g,n}: \Pic(\Mgb^k)_{\tor}\hookrightarrow  \Pic(\Mgb^k) \twoheadrightarrow \frac{\Pic(\Mgb^k)}{\Pic^{\ta}(\Mgb^k)}
\end{equation}
is injective. Note that the decomposition \eqref{E:splitPic} holds true if and only if $\alpha_{g,n}$ is also surjective (hence an isomorphism). Step I implies that $\alpha_g:=\alpha_{g,0}$ is an isomorphism. 

Consider now the following commutative diagram
$$\begin{tikzcd}
\Pic(\ov\cM_g)_{\tor} \ar{r}{\ov F^*} \ar{d}{\alpha_g} & \Pic(\Mgb^k)_{\tor} \ar{d}{\alpha_{g,n}}\\
\frac{\Pic(\ov\cM_g^k)}{\Pic^{\ta}(\ov\cM_g^k)} \ar{r}{[\ov F^*]} & \frac{\Pic(\Mgb^k)}{\Pic^{\ta}(\Mgb^k)}
\end{tikzcd}$$
where we have used that $\ov F^*$  sends torsion line bundles into torsion line bundles.  Since $\alpha_g$ is an isomorphism by Step I, $[\ov F^*]$ is an isomorphism by Step II and $\alpha_{g,n}$ is injective, we conclude from the above diagram that $\alpha_{g,n}$ is an isomorphism (so that \eqref{E:splitPic} holds true) and that  \eqref{E:tor-pull} holds true.

\end{proof}

\section{Proof of the Main Results}\label{S:conclu}

The aim of this section is to prove Theorem A and Theorem B of the introduction.

Let us first collect in the following proposition all the properties of the two commutative group algebraic spaces $\Pi_S^{\tau}:\PIC^{\tau}_{\Mgb^S/S}\to S$ and $\Pi_S: \PIC_{\Mgb^S/S}\to S$.  

\begin{prop}\label{P:propPic}
Let $S$ be a scheme. 
\begin{enumerate}
\item \label{P:propPic1} $\Pi_S:\PIC_{\Mgb^S/S}\to S$ is a commutative group scheme,  locally quasi-finite over $S$ and it satisfies the valuative criterion of properness (hence it is separated).
\item \label{P:propPic2} $\Pi_S^{\tau}: \PIC^{\tau}_{\Mgb^S/S}\to S$ is an open and closed sub-group scheme of $\PIC_{\Mgb^S/S}$, which is finite over $S$.
\end{enumerate}
\end{prop}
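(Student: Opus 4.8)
The plan is to deduce Proposition \ref{P:propPic} from the general results on Picard group spaces collected in Section \ref{Sec:PicSpa}, applied to the morphism $f_S:\Mgb^S\to S$. By Fact \ref{F:Mgnbar}, $f_S$ is proper, smooth and of finite presentation; in particular it is cohomologically flat in degree zero, with geometrically connected and geometrically normal (indeed smooth) fibres, so the conclusions of Theorems \ref{T:repre-Pic} and \ref{T:Pictau} are available for $f_S$.

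For part \eqref{P:propPic1}, I would first invoke Theorem \ref{T:repre-Pic} to know that $\Pi_S:\PIC_{\Mgb^S/S}\to S$ is a commutative group algebraic space, locally of finite type over $S$, whose formation commutes with base change, and then Theorem \ref{T:Pictau}\eqref{T:Pictau3a} and \eqref{T:Pictau4a} to know that $\Pi_S$ is separated and satisfies the valuative criterion of properness. It then remains to upgrade the algebraic space $\PIC_{\Mgb^S/S}$ to a scheme, for which I would show that $\Pi_S$ is locally quasi-finite: since $\Pi_S$ is locally of finite type, this is equivalent to the vanishing $\dim \PIC_{\Mgb^{\ov k}/\ov k}=0$ for every algebraically closed residue field $\ov k$ of a point of $S$, and this vanishing holds because $\PIC^{\tau}_{\Mgb^{\ov k}/\ov k}$ — hence also the identity component of $\PIC_{\Mgb^{\ov k}/\ov k}$, which has the same dimension as the whole group scheme — is a finite $\ov k$-group scheme by parts \eqref{P:tau-tors2}--\eqref{P:tau-tors3} of Proposition \ref{P:tau-tors}. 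Finally, a separated and locally quasi-finite group algebraic space over a scheme is a scheme, and this gives that $\PIC_{\Mgb^S/S}$ is a commutative group scheme.

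For part \eqref{P:propPic2}, Theorem \ref{T:Pictau}\eqref{T:Pictau1} shows that $\PIC^{\tau}_{\Mgb^S/S}$ is an open sub-group algebraic space of $\PIC_{\Mgb^S/S}$, of finite type over $S$ and with formation commuting with base change, while Theorem \ref{T:Pictau}\eqref{T:Pictau3c} (using that the fibres of $f_S$ are geometrically normal) shows that it is closed in $\PIC_{\Mgb^S/S}$; being open and closed in the scheme $\PIC_{\Mgb^S/S}$, it is a sub-group scheme. To prove finiteness of $\Pi_S^{\tau}$ I would combine two facts: it is proper over $S$ by Theorem \ref{T:Pictau}\eqref{T:Pictau4b}, which applies because $f_S$ is smooth; and each of its fibres is finite, since by base change the fibre over $s\in S$ is $\PIC^{\tau}_{\Mgb^{k(s)}/k(s)}$, which becomes a finite group scheme after the faithfully flat base change to $\ov{k(s)}$ by parts \eqref{P:tau-tors2}--\eqref{P:tau-tors3} of Proposition \ref{P:tau-tors} and hence is already finite over $k(s)$. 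Since a proper morphism with finite fibres (equivalently, a proper and quasi-finite morphism) is finite, we conclude that $\Pi_S^{\tau}$ is finite over $S$.

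I expect the only step that goes beyond a bookkeeping exercise to be the reduction, in part \eqref{P:propPic1}, of the scheme property to the zero-dimensionality of $\PIC_{\Mgb^{\ov k}/\ov k}$ over algebraically closed fields — and hence, ultimately, to the algebraic simple-connectedness of $\Mgb^{\ov k}$ (Theorem \ref{T:simply-con}) underlying Proposition \ref{P:tau-tors}. A minor point to be careful about is that ``of finite type over $S$'' in Theorem \ref{T:Pictau}\eqref{T:Pictau1} is used with its quasi-compact meaning, so that ``proper with finite fibres'' really does yield ``finite'' for $\Pi_S^{\tau}$; apart from that, the argument is a matter of assembling the statements of Section \ref{Sec:PicSpa} with Proposition \ref{P:tau-tors}.
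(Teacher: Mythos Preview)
Your proposal is correct and follows essentially the same route as the paper: both arguments assemble the general results of Section~\ref{Sec:PicSpa} (Theorems~\ref{T:repre-Pic} and~\ref{T:Pictau}) with Proposition~\ref{P:tau-tors} to get zero-dimensionality of the fibres, deduce local quasi-finiteness, and then invoke the fact that a separated locally quasi-finite algebraic space over a scheme is a scheme (the paper cites \cite[\href{https://stacks.math.columbia.edu/tag/03XX}{Tag 03XX}]{stacks-project}). The only cosmetic difference is that the paper reaches finiteness of $\Pi_S^{\tau}$ by noting it is proper and locally quasi-finite (inherited from $\PIC_{\Mgb^S/S}$), whereas you phrase this as ``proper with finite fibres''; these are equivalent here since $\Pi_S^{\tau}$ is of finite type.
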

Note however that $\Pi_S:\PIC_{\Mgb^S/S}\to S$ is not universally closed (hence not proper) since it is not quasi-compact. 
\begin{proof}
Since $\Mgb^S \to S$ is proper and smooth with geometrically connected fibers, it follows from Theorem \ref{T:Pictau} that $\Pi_S:\PIC_{\Mgb^S/S}\to S$ is a commutative group algebraic space,  locally of finite type and equidimensional over $S$, which satisfies the valuative criterion for properness (hence it is separated).  Moreover, Proposition \ref{P:tau-tors} gives that the relative dimension of $\Pi_S$  is zero, which implies that 
$\Pi_S:\PIC_{\Mgb^S/S}\to S$ is locally quasi-finite. We deduce that $\PIC_{\Mgb^S/S}$ is a scheme by \cite[\href{https://stacks.math.columbia.edu/tag/03XX}{Tag 03XX}]{stacks-project}.

Again  from Theorem \ref{T:Pictau}, it follows that $\PIC^{\tau}_{\Mgb^S/S}$ is an open and closed sub-group algebraic space of $\PIC_{\Mgb^S/S}$, which is proper over $S$. From the properties of $\PIC_{\Mgb^S/S}$ already shown, we deduce that $\PIC^{\tau}_{\Mgb^S/S}$ is a  scheme locally quasi-finite over $S$, which then implies that $\Pi_S^{\tau}$ is a finite morphism (being proper).
\end{proof}

The main ingredient in the proof of Theorem A is the following flatness result, which follows from the main result  of the Appendix.
%(Theorem \ref{T:main-Ray}) , extending one of the results of  Raynaud \cite{Ray79} to proper and flat morphisms of stacks. 

\begin{teo}\label{T:flat}
Let $k$ be an algebraically closed field of characteristic $p>0$ and let $W(k)$ be the ring of Witt vectors on $k$. 
Consider the the commutative group scheme  
$$\Pi^{\tau}_{W(k)}:\PIC^{\tau}_{\Mgb^{W(k)}/W(k)}\to \Spec W(k).$$
Then we have that
\begin{enumerate}[(i)]
\item \label{T:flat1} $\Pi^{\tau}_{W(k)}$ is flat along the zero section;
\item \label{T:flat2} $\Pi^{\tau}_{W(k)}$ is flat if $p>2$.
\end{enumerate}
\end{teo}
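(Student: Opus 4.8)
The plan is to deduce Theorem \ref{T:flat} from the general flatness result for $\PIC^{\tau}$ of proper cohomologically flat stacks over a discrete valuation ring that is proved in the Appendix (namely Theorem \ref{T:main-Ray}, the stacky analogue of Raynaud's theorem). The first step is to verify that the hypotheses of that theorem are satisfied by $\cX:=\Mgb^{W(k)}\to \Spec W(k)=:\Spec R$. Indeed, $R=W(k)$ is a complete discrete valuation ring (of mixed characteristic $(0,p)$) with algebraically closed residue field $k$, the morphism $\Mgb^{W(k)}\to \Spec W(k)$ is proper and smooth (base change of $\Mgb\to \Spec\bbZ$, Fact \ref{F:Mgnbar}) with geometrically connected and geometrically reduced fibers, hence it is cohomologically flat in degree zero by \cite[Ex. 9.3.11]{Kle05}. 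In particular $\PIC_{\Mgb^{W(k)}/W(k)}$ is represented by a group algebraic space and, by Proposition \ref{P:propPic}, $\PIC^{\tau}_{\Mgb^{W(k)}/W(k)}\to \Spec W(k)$ is a finite morphism of schemes; this is the object to which the Raynaud-type criterion will be applied.

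The second step is the input needed to run the criterion: the generic fiber of $\Pi^{\tau}_{W(k)}$ must be trivial. The generic fiber is $\PIC^{\tau}_{\Mgb^{K}/K}$ where $K=\mathrm{Frac}(W(k))$ has characteristic $0$; passing to an algebraic closure $\ov K$, formation of $\PIC^{\tau}$ commutes with field extension (Theorem \ref{T:Pictau}\eqref{T:Pictau1}), so it suffices to know $\PIC^{\tau}_{\Mgb^{\ov K}/\ov K}=0$. This is exactly Proposition \ref{P:tau-tors}\eqref{P:tau-tors2}: over an algebraically closed field of characteristic zero, $\PIC^{\tau}_{\Mgb^{\ov K}/\ov K}$ is the trivial group scheme (using that $\Mgb^{\ov K}$ is algebraically simply connected, Theorem \ref{T:simply-con}). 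So $\Pi^{\tau}_{W(k)}$ is a finite $W(k)$-group scheme with trivial generic fiber. The third step is then to quote the Appendix: Theorem \ref{T:main-Ray} gives that such a group scheme is flat over $\Spec R$ when $p>2$, and flat along the zero section when $p=2$ — which is precisely statements \eqref{T:flat2} and \eqref{T:flat1}. One should also record that the ``flat along the zero section'' conclusion is automatic from finiteness plus the structure of $\PIC^o$ of the fibers (the identity components of the fibers are killed by the simple-connectedness argument of Proposition \ref{P:tau-tors}, so the zero section is an open and closed point of each fiber), but the quantitative improvement to full flatness for $p>2$ is where the real content of Raynaud's argument — controlling torsion in Néron models / Picard functors by the ramification of $R$, which is tame exactly when $p>2$ — is used.

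The main obstacle is therefore not in this section at all but is imported: it is the verification, carried out in the Appendix, that Raynaud's flatness theorem for $\PIC^{\tau}$ over a discrete valuation ring extends from schemes to (proper, cohomologically flat) algebraic stacks, and the tracking of where the hypothesis $p>2$ (tame ramification of $W(k)/\bbQ_p$ in the relevant sense, i.e. $e=1<p$) is genuinely needed versus where one only gets flatness along the zero section. Within the present section the only thing to be careful about is that $\PIC^{\tau}_{\Mgb^{W(k)}/W(k)}$ is genuinely a \emph{scheme} finite over $W(k)$ (so that ``flat'' has its naive meaning), which is guaranteed by Proposition \ref{P:propPic}, and that the generic fiber vanishing is applied after base change to $\ov K$, which is legitimate since $\PIC^{\tau}$ commutes with base change by Theorem \ref{T:Pictau}\eqref{T:Pictau1}.
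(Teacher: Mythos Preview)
Your approach is essentially the same as the paper's: both deduce Theorem \ref{T:flat} directly from Theorem \ref{T:main-Ray}, after checking that $W(k)$ is a complete DVR of absolute ramification index $e=1$ and that $\Mgb^{W(k)}\to\Spec W(k)$ is proper and cohomologically flat. The numerics $e=1\leq p-1$ (always) and $e=1<p-1$ (iff $p>2$) then give \eqref{T:flat1} and \eqref{T:flat2}.

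However, you mis-identify the hypothesis needed to invoke Theorem \ref{T:main-Ray}. That theorem does \emph{not} require the generic fiber of $\Pi^{\tau}$ to be trivial; its hypothesis is that $\Pi^{\tau}_{\cX/R}$ is \emph{equidimensional}. The paper verifies this via Theorem \ref{T:Pictau}\eqref{T:Pictau3b} (smooth, hence geometrically normal, fibers). Your citation of Proposition \ref{P:propPic} (finiteness of $\PIC^{\tau}_{\Mgb^{W(k)}/W(k)}$ over $W(k)$) in fact already yields equidimensionality (all fibers are zero-dimensional), so your argument is recoverable, but you should make this explicit rather than invoking generic triviality.

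The entire ``second step'' about $\PIC^{\tau}_{\Mgb^{\ov K}/\ov K}=0$ via Proposition \ref{P:tau-tors}\eqref{P:tau-tors2} is unnecessary for Theorem \ref{T:flat} itself. You have imported material from the \emph{next} result, Proposition \ref{P:Pictau}, where generic triviality is combined with the flatness just proved to conclude that the special fiber is \'etale (resp.\ trivial for $p>2$). Keep these two steps separate: Theorem \ref{T:flat} is pure Raynaud (equidimensionality plus ramification bound), and the vanishing of the generic fiber enters only afterwards.
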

\begin{proof}
It follows from Theorem \ref{T:main-Ray}, using that the absolute ramification index of $W(k)$ is $1$ and that the $\Pi^{\tau}_{W(k)}$ is equidimensional by Theorem \ref{T:Pictau}\eqref{T:Pictau3} and the fact that $f_{W(k)}:\Mgb^{W(k)}\to \Spec W(k)$ is smooth. 
\end{proof}

As an application of the above Theorem, we can prove the following

\begin{prop}\label{P:Pictau}
Let $k$ be any field.  Then we have that
\begin{enumerate}[(i)]
\item \label{P:Pictau1}  $\PIC^{\tau}_{\Mgb^k/k}$  is \'etale over $k$.
\item \label{P:Pictau2} If $\char(k)\neq 2$ then  $\PIC^{\tau}_{\Mgb^k/k}$  is the trivial $k$-group scheme.
\end{enumerate}
\end{prop}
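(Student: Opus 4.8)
The plan is to first reduce to the case of an algebraically closed field, then invoke the results already obtained. For part \eqref{P:Pictau1}, note that the formation of $\PIC^{\tau}_{\Mgb^k/k}$ commutes with field extensions (see the discussion after \eqref{E:Pico-tau}), so we may assume $k=\ov k$. If $\char(k)=0$ we are done by Proposition \ref{P:tau-tors}\eqref{P:tau-tors2}. If $\char(k)=p>0$, then by Proposition \ref{P:tau-tors}\eqref{P:tau-tors3} the group scheme $\PIC^{\tau}_{\Mgb^k/k}$ is a finite $k$-group scheme; its reduced subscheme is étale, so it suffices to show that it is reduced, equivalently (since $k$ is perfect) that it is smooth over $k$. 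Here is where I would use the flatness result: let $W(k)$ be the ring of Witt vectors and consider $\Pi^{\tau}_{W(k)}:\PIC^{\tau}_{\Mgb^{W(k)}/W(k)}\to \Spec W(k)$. By the characteristic-zero case applied to the generic fibre, the generic fibre of $\Pi^{\tau}_{W(k)}$ is trivial, in particular étale. By Theorem \ref{T:flat}, $\Pi^{\tau}_{W(k)}$ is flat along the zero section (and flat if $p>2$). A flat finite group scheme over $W(k)$ whose generic fibre is étale has étale special fibre as well — this is the standard fact that the order of a flat finite group scheme is locally constant and that an order-coprime-to-$p$ finite group scheme over a strictly henselian base with étale generic fibre is étale; more precisely, since $\Pi^{\tau}_{W(k)}$ is flat along the zero section and $W(k)$ is a DVR, the closure of the (trivial) generic fibre in $\PIC^{\tau}_{\Mgb^{W(k)}/W(k)}$ is flat over $W(k)$, finite, and contains the zero section as an open and closed subscheme; comparing with the special fibre forces the identity component $\PIC^o_{\Mgb^k/k}$ to be trivial, hence $\PIC^{\tau}_{\Mgb^k/k}$ is étale. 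When $p=2$ we only have flatness along the zero section, which is still enough to conclude that the connected component of the identity $\PIC^o_{\Mgb^k/k}$ is trivial, hence $\PIC^{\tau}_{\Mgb^k/k}$ is étale.

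For part \eqref{P:Pictau2}, assume $\char(k)\neq 2$; again we may extend scalars and assume $k=\ov k$. If $\char(k)=0$, then $\PIC^{\tau}_{\Mgb^k/k}=0$ by Proposition \ref{P:tau-tors}\eqref{P:tau-tors2}. If $\char(k)=p>2$, we know from part \eqref{P:Pictau1} (or directly Proposition \ref{P:tau-tors}\eqref{P:tau-tors3}) that $\PIC^{\tau}_{\Mgb^k/k}$ is a finite étale $k$-group scheme, and since $k=\ov k$ this group scheme is determined by the finite abelian group $\Pic^{\tau}(\Mgb^k)=\Pic(\Mgb^k)_{\tor}$, which is a finite $p$-group. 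The task is thus to show this $p$-group vanishes. I would deduce this from the flatness of $\Pi^{\tau}_{W(k)}$ (which holds since $p>2$, by Theorem \ref{T:flat}\eqref{T:flat2}): the group scheme $\PIC^{\tau}_{\Mgb^{W(k)}/W(k)}\to\Spec W(k)$ is finite flat, its generic fibre is trivial by the characteristic-zero vanishing, and a finite flat group scheme over the connected base $\Spec W(k)$ has constant order, so the special fibre also has order one, i.e. $\PIC^{\tau}_{\Mgb^k/k}=0$.

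The main obstacle, as in the body of the paper, is the case $\char(k)=p=2$, where Theorem \ref{T:flat} only gives flatness along the zero section rather than full flatness, so the "constant order" argument breaks down and one genuinely cannot conclude vanishing of $\Pic(\Mgb^k)_{\tor}$ from this proposition alone — this is precisely why part \eqref{P:Pictau2} is stated with the hypothesis $\char(k)\neq 2$, and why the full Main Theorem in characteristic $2$ requires the extra assumption $g\leq 5$ together with the knowledge of $\Pic(\cM_g^k)$ via \eqref{E:nontaut}. Within the scope of this proposition, though, I expect the only delicate point to be carefully justifying that flatness along the zero section of a quasi-finite separated group scheme over a DVR, with étale generic fibre, forces the special fibre to be étale; this should follow by taking the schematic closure of the generic fibre and using that the zero section is open and closed in the special fibre.
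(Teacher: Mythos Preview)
Your proposal is correct and follows essentially the same route as the paper: reduce to $k=\ov k$, handle characteristic zero via Proposition~\ref{P:tau-tors}, and in positive characteristic lift to $W(k)$, use that the generic fibre of $\Pi^{\tau}_{W(k)}$ is trivial by the characteristic-zero case, and then invoke Theorem~\ref{T:flat}. The paper is terser at the key step, simply asserting that Theorem~\ref{T:flat} implies $\Pi^{\tau}_{W(k)}$ is unramified (and trivial if $p>2$), with a pointer to the discussion after \cite[Thm.~1']{Ray79}; you spell out the same deduction via the constancy of fibre length for a finite flat morphism.

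One caution: your first formulation in part~\eqref{P:Pictau1}, ``a flat finite group scheme over $W(k)$ whose generic fibre is \'etale has \'etale special fibre as well'', is false as stated (take $\mu_p$ over $W(k)$). What makes the argument go through here is that the generic fibre is not merely \'etale but \emph{trivial}, so flatness of $\Pi^{\tau}_{W(k)}$ at the identity point $e_k$ of the special fibre forces the local fibre length there to equal $1$, whence $\PIC^o_{\Mgb^k/k}$ is trivial. Your ``more precisely'' clause does get this right, so the proof stands; just drop the misleading first sentence.
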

\begin{proof}
%Observe that  $\PIC^o_{\Mgb^{k}/k}$  is trivial if and only if $H^1(\Mgb^k,\cO_{\Mgb^k})=T_e \PIC_{\Mgb^{k}/k}=0$ (see Proposition \ref{P:smPic}\eqref{P:smPic1}), hence \eqref{C:Pictau01} is equivalent to \eqref{C:Pictau02}.  Moreover, 
Since the formation of  $\PIC^{\tau}_{\Mgb^k/k}$  commutes with field extensions, we can assume that $k$ is algebraically closed. 
We will distinguish two cases according to the characteristic of  $k$.

\un{I Case:} $\char(k)=0$.

In this case, $\PIC^{\tau}_{\Mgb^k/k}$  is the trivial $k$-group scheme by Proposition \ref{P:tau-tors}\eqref{P:tau-tors3}.
% since $\PIC^{\tau}_{\Mgb^k/k}$ is a smooth $k$-group scheme by Proposition \ref{P:smPic}\eqref{P:smPic2} with the property that 
%$$\PIC^{\tau}_{\Mgb^k/k}(k)=\NS(\Mgb^k)_{\tor}=\Pic(\Mgb^k)_{\tor}=\{0\},$$ 
%by Proposition \ref{P:finit-NS}, Corollary \ref{C:fin-gen} and Theorem \ref{T:Pic-taut}\eqref{T:Pic-taut4}.

\un{II Case:} $\char(k)=p>0$.

Consider the commutative group scheme  
$$\Pi^{\tau}_{W(k)}:\PIC^{\tau}_{\Mgb^{W(k)}/W(k)}\to \Spec W(k),$$
where $W(k)$ is the ring of Witt vectors on $k=\ov k$. 

 The generic fiber of $\Pi_{W(k)}^{\tau}$ is equal to 
 $$\Pi_{F(W(k))}^{\tau}:\PIC^{\tau}_{\Mgb^{F(W(k))}/F(W(k))}\to \Spec F(W(k)),$$
  where $F(W(k))$ is the fraction field of $W(k)$. Since $\char F(W(k))=0$, it follows from the previous case  that $\PIC^{\tau}_{\Mgb^{F(W(k))}/F(W(k))}$ 
  is the trivial group scheme, or in other words that $\PIC^{\tau}_{\Mgb^{W(k)}/W(k)}$ is generically trivial over $\Spec W(k)$. This implies that the flat closure of $\PIC^{\tau}_{\Mgb^{F(W(k))}/F(W(k))}$ inside
  $\PIC^{\tau}_{\Mgb^{W(k)}/W(k)}$ (see \cite[Prop. 2.8.5]{EGAIV2}) is the image of the zero section of $\Pi_{F(W(k))}^{\tau}$. 
  
  %OLD CONCLUSION
 % Theorem \ref{T:flat} now implies that $\Pi^{\tau}_{W(k)}:\PIC^{\tau}_{\Mgb^{W(k)}/W(k)}\to \Spec W(k)$ is unramified (see also the discussion after \cite[Thm. 1']{Ray79}) and it is the trivial group scheme if $p>2$. 
 % By passing to the special fiber of $\Pi^{\tau}_{W(k)}$, we get the conclusion. 
  
  Now, if $p>2$ then Theorem \ref{T:flat} implies that  $\PIC^{\tau}_{\Mgb^{W(k)}/W(k)}$ is the flat closure of its generic fiber, and hence it coincides with the image of its zero section (i.e. it is the trivial group scheme over $\Spec W(k)$). By passing to the central fiber, we get that 
  $\PIC^{\tau}_{\Mgb^k/k}$  is the trivial $k$-group scheme. 
   
 If, instead $p=2$, then Theorem \ref{T:flat} implies that there exists an open neighborhood $U$ of the image of the zero section of  $\PIC^{\tau}_{\Mgb^{W(k)}/W(k)}$ which is the flat closure of its generic fiber, and hence $U$ must coincide with the image of its zero section. But this implies that 
 the image of the zero section is a connected component of  $\PIC^{\tau}_{\Mgb^{W(k)}/W(k)}$, being open, closed and connected. By passing to the central fiber, we get that $\PIC^{\tau}_{\Mgb^k/k}$  is a $k$-group scheme of finite type over $k$ (by Proposition \ref{P:propPic}\eqref{P:propPic2}) having  the image of the zero section as a connected component, which
implies that $\PIC^{\tau}_{\Mgb^k/k}$ is \'etale over $k$.

%We deduce that the group scheme $\PIC^{\tau}_{\Mgb^{W(k)}/W(k)}$ should be equal to the image of its zero section by \cite[Prop. (2.8.5)]{EGAIV2}, since both of them are equal to the (flat) closure of the generic fiber of $\Pi^{\tau}_{W(k)}$. This is equivalent to say that $\Pi^{\tau}_{W(k)}:\PIC^{\tau}_{\Mgb^{W(k)}/W(k)}\to \Spec W(k)$ is the trivial group scheme. In particular,  its special fiber of $\Pi^{\tau}_{k}: \PIC^{\tau}_{\Mgb^k/k}\to \Spec k$  is trivial. We now conclude using Theorem \ref{T:thmA}. 

\end{proof}

\begin{cor}\label{C:Pico}
Let $k$ be any field.  
\begin{enumerate}[(i)]
\item \label{C:Pico1} $\PIC^o_{\Mgb^{k}/k}$  is is the trivial $k$-group scheme.
\item \label{C:Pico2} $H^1(\Mgb^k,\cO_{\Mgb^k})=0$.
\end{enumerate}
\end{cor}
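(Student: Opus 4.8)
The plan is to deduce both statements directly from Proposition \ref{P:Pictau} together with the cohomological description of the Picard group scheme from Section \ref{Sec:PicSpa}. For part \eqref{C:Pico1}, first I would reduce to $k$ algebraically closed, since the formation of $\PIC^o_{\Mgb^k/k}$ commutes with field extensions (and a group scheme over a field is trivial iff it is so after a field extension). Then, since $\PIC^o_{\Mgb^k/k}\subseteq \PIC^\tau_{\Mgb^k/k}$ by \eqref{E:Pico-tau}, and the latter is finite over $k$ by Proposition \ref{P:tau-tors} (or Proposition \ref{P:propPic}), the connected component $\PIC^o_{\Mgb^k/k}$ is a finite connected group scheme containing the identity. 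In fact it suffices to observe that $\Pic^o(\Mgb^k)=\PIC^o_{\Mgb^k/k}(k)=0$ was already shown inside the proof of Proposition \ref{P:tau-tors} (the reduced subscheme is an abelian variety with no nontrivial torsion points, hence is a point); so $\PIC^o_{\Mgb^k/k}$ is a finite connected group scheme with trivial $k$-points, i.e. an infinitesimal group scheme. To conclude it is the trivial group scheme, I would invoke Proposition \ref{P:Pictau}\eqref{P:Pictau1}: $\PIC^\tau_{\Mgb^k/k}$ is étale over $k$, hence reduced, hence so is its open subgroup $\PIC^o_{\Mgb^k/k}$; a reduced connected group scheme of finite type over an algebraically closed field with a single $k$-point is $\Spec k$.

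For part \eqref{C:Pico2}, the key input is Proposition \ref{P:smPic}\eqref{P:smPic1}, which identifies $H^1(\Mgb^k,\cO_{\Mgb^k})$ with the tangent space $T_e\PIC_{\Mgb^k/k}$ at the identity, and asserts $\dim\PIC_{\Mgb^k/k}\le \dim_k H^1(\Mgb^k,\cO_{\Mgb^k})$ with equality iff $\PIC_{\Mgb^k/k}$ is smooth. Since $\PIC^\tau_{\Mgb^k/k}$ is étale over $k$ by Proposition \ref{P:Pictau}\eqref{P:Pictau1}, in particular it is smooth of dimension $0$, and since $\PIC^\tau_{\Mgb^k/k}$ is open in $\PIC_{\Mgb^k/k}$ and contains the identity, the group scheme $\PIC_{\Mgb^k/k}$ is smooth of dimension $0$ at $e$, hence (being a group scheme) smooth of dimension $0$ everywhere. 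Therefore $T_e\PIC_{\Mgb^k/k}=0$, which gives $H^1(\Mgb^k,\cO_{\Mgb^k})=0$ after base change to $\ov k$; since $H^1$ commutes with the flat base field extension $k\subseteq\ov k$, we get the vanishing over $k$ as well.

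I do not expect a serious obstacle here: both parts are formal consequences of the étaleness statement in Proposition \ref{P:Pictau}, which is where the real work (via the Raynaud-type flatness theorem of the Appendix) has already been done. The only points requiring a little care are the base-change reductions to $k=\ov k$ and the standard fact that a group scheme locally of finite type over a field is smooth as soon as it is smooth at the identity — both of which are already used elsewhere in the paper (e.g. in the proof of Proposition \ref{P:smPic}). If one prefers to avoid even mentioning $\PIC^o$, part \eqref{C:Pico1} can be phrased as: $\PIC^o_{\Mgb^k/k}$ is the identity component of the étale group scheme $\PIC^\tau_{\Mgb^k/k}$, hence trivial.
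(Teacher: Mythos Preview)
Your proposal is correct and follows essentially the same route as the paper: both parts are deduced from the \'etaleness of $\PIC^{\tau}_{\Mgb^k/k}$ (Proposition \ref{P:Pictau}\eqref{P:Pictau1}) together with the identification $H^1(\Mgb^k,\cO_{\Mgb^k})=T_e\PIC_{\Mgb^k/k}$ from Proposition \ref{P:smPic}. The paper's proof is simply the terse version you mention at the end of your plan for part \eqref{C:Pico1}: $\PIC^o$ is the identity component of the \'etale group scheme $\PIC^{\tau}$, hence trivial; your preliminary detour through Proposition \ref{P:tau-tors} and the reduction to $k=\ov k$ are unnecessary (though harmless).
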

\begin{proof}
Since $\PIC^{\tau}_{\Mgb^k/k}$  is \'etale over $k$ by Proposition \ref{P:Pictau}, we have that the connected component of  $\PIC^{\tau}_{\Mgb^k/k}$ which contains the identity element, i.e. $\PIC^{o}_{\Mgb^k/k}$, must be the trivial $k$-group scheme. 
Using that $H^1(\Mgb^k,\cO_{\Mgb^k})=T_e \PIC_{\Mgb^{k}/k}$ by Proposition \ref{P:smPic}\eqref{P:smPic1}, we conclude that $H^1(\Mgb^k,\cO_{\Mgb^k})=0$. 
\end{proof}

\begin{rmk}\label{R:vanH1}
The vanishing of $H^1(\Mgb^{\bbC},\cO_{\Mgb^{\bbC}})$  (which then implies the result over any field of characteristic zero) follows from the vanishing of $H^1(\Mgb^{\bbC},\bbC)$ shown by Arbarello-Cornalba \cite[Thm. 2.1]{AC98} together with the Hodge decomposition 
$$H^1(\Mgb^{\bbC},\bbC)=H^1(\Mgb^{\bbC},\cO_{\Mgb^{\bbC}})\oplus H^0(\Mgb^{\bbC}, \Omega^1_{\Mgb^{\bbC}}).$$
An alternative proof of the vanishing of $H^1(\Mgb^{\bbC},\cO_{\Mgb^{\bbC}})$ was given by Okawa-Sano  \cite[Thm. 3.2]{OS} using the Hodge symmetry 
$$\dim H^1(\Mgb^{\bbC},\cO_{\Mgb^{\bbC}})=\dim H^0(\Mgb^{\bbC}, \Omega^1_{\Mgb^{\bbC}}).$$
Both proofs ultimately relies on the  Harer's vanishing theorem \cite{Har86}. 
%Vecchia frasetta
%Our proof of the vanishing of $H^1(\Mgb^k,\cO_{\Mgb^k})$  is completely algebraic, and indeed it holds also in positive characteristic.
%Vecchi commenti \Roberto{Se non ho capito male il vanishing segue, tra le tante cose, dal teorema 5.2(4). Nel teorema 5.2(4) non si usa mai il risultato di Harer (o quello di Moriwaki o di Schroer, che seguono da quello di Harer)?} \Filippo{La nostra dimostrazione non usa il vanishing di Harer, ma l'unico risultato di topologia che usa e' che $\Mgb^{\bbC}$ e' topologicamente semplicemente connesso. Quindi non e' vero che la nostra dimostrazione e' puramente algebrica, c'e' pure questo ingrediente topologico. Forse potremmo togliere questa frase e basta, che ne dici?} \Roberto{Si' direi di toglierla}
\end{rmk}

We can now give a proof of Theorem A and Theorem B of the introduction.

\begin{proof}[Proof of Theorem A]
Part \eqref{T:mainThm1}: observe that, since $\un{\Lambda_{g,n}}_S$ is flat over $S$, the morphism $\Phi^S_{g,n}$ is an isomorphism if and only if $\Phi^{\ov{k(s)}}$ is an isomorphism for every $s\in S$ (see  \cite[(17.9.5)]{EGAIV4}).

Since $\un{\Lambda_{g,n}}_{k(s)}$ is \'etale over $\ov{k(s)}$ and $\Lambda_{g,n}$ is a torsion-free group, the morphism   $\Phi^{\ov{k(s)}}$ is an isomorphism if and only if 
\begin{equation}\label{E:cond-fib1}
\PIC^{\tau}_{\Mgb^{\ov{k(s)}}/\ov{k(s)}} \: \text{ is the trivial $\ov{k(s)}$-group scheme, }
\end{equation}
and 
\begin{equation}\label{E:cond-fib2}
\Lambda_{g,n} \xrightarrow{\Phi_{\ov{k(s)}}(\ov{k(s)})} \Pic(\Mgb^{\ov{k(s)}})\twoheadrightarrow \NS^{\nu}(\Mgb^{\ov{k(s)}})  \: \text{ is an isomorphism.} 
\end{equation}
Now, if $\char(k(s))\neq 2$ then condition \eqref{E:cond-fib1} holds by Proposition \ref{P:Pictau}\eqref{P:Pictau2}. On the other hand,  if $g\leq 5$ then condition \eqref{E:cond-fib1} holds by Proposition \ref{P:Pictau}\eqref{P:Pictau1} together with 
$$\PIC^{\tau}_{\Mgb^k/k}(k)=\NS(\Mgb^k)_{\tor}=\Pic(\Mgb^k)_{\tor}=\{0\},$$ 
which follows by Proposition \ref{P:tau-tors}\eqref{P:tau-tors1} and Theorem \ref{T:Pic-taut}\eqref{T:Pic-taut4}. Finally, condition \eqref{E:cond-fib2} holds by Theorem \ref{T:Pic-taut}: part \eqref{T:Pic-taut1} implies that $\Phi_{\ov{k(s)}}(\ov{k(s)})$ maps $\Lambda_{g,n}$ isomorphically onto the 
tautological subgroup $\Pic^{\ta}(\Mgb^{\ov{k(s)}})\subseteq \Pic(\Mgb^{\ov{k(s)}})$ and part \eqref{T:Pic-taut4} implies that $\Pic^{\ta}(\Mgb^{\ov{k(s)}})$ maps isomorphically onto $\NS^{\nu}(\Mgb^{\ov{k(s)}})$. 

Let us now prove part \eqref{T:mainThm2}. Since $S$ is connected, we have that $\un{\Lambda_{g,n}}(S)=\Lambda_{g,n}$. By the definition of the morphisms $\ov P_{g,n}(S)$  and $\Phi_{g,n}^S$, we have the following commutative diagram
$$\begin{tikzcd}
0 \ar{r} & \Pic(S) \ar{r}{f_S^*}  & \Pic(\Mgb^S) \ar{r} & \PIC_{\Mgb^S/S}(S)\\
& & \Lambda_{g,n}\ar{u}{\ov P_{g.n}(S)} \ar[equal]{r} & \un{\Lambda_{g,n}}(S) \ar[swap]{u}{\Phi_{g.n}^S(S)}  
\end{tikzcd}$$
where the first row is exact (see \cite[Chap. 8, Prop. 4]{BLR}). We now conclude using that $\Phi_{g,n}^S(S)$ is an isomorphism by part \eqref{T:mainThm1}. 
\end{proof}

%\Filippo{Ho aggiunto questo Remark, dove si spiega cosa accade nei casi brutti ma senza indugiare troppo nella dimostrazione}

\begin{rmk}\label{R:bad}
Without the assumption of Theorem A (namely that $S$ is a $\bbZ[1/2]$-scheme or that $g\leq 5$), the proof of Theorem A gives that the tautological $S$-morphism 
$\Phi_{g,n}^S$ is an open and closed embedding. Moreover, $\Pi_S:\PIC_{\Mgb^S/S}\to S$ is unramified  (by Corollary \ref{C:Pico}) and we have an isomorphism of $S$-groups schemes
$$\PIC^\tau_{\Mgb^S/S}\times \un{\Lambda_{g,n}}_S\xrightarrow{\cong}\PIC_{\Mgb^S/S}.$$
However, we do not know how to prove in this case that the unramified group scheme $\Pi_S^{\tau}:\PIC^{\tau}_{\Mgb^S/S}\to S$ is trivial. 
\end{rmk}

\begin{proof}[Proof of Theorem B]
This follows from Theorem \ref{T:Pic-taut} using that $\Pic(\Mgb^k)_{\tor}=0$ if $\char(k)\neq 2$ by Theorem A. 
\end{proof}

\section{Some consequences}\label{S:cons}

The aim of this section is to draw some consequences from the main results of the paper. 

We first observe that  the complex analytic first Chern class of $\Mgb^{\bbC}$ with values  on the singular second cohomology group of the complex analytic stack $\Mgb^{\an}$  is an isomorphism (see  \cite{Beh04} for a discussion of singular cohomology of DM topological stacks).

\begin{prop}\label{P:c1an}
The complex analytic first Chern class 
$$c_1^{\an}:\Pic(\Mgb^{\bbC}) \to H^2(\Mgb^{\an},\bbZ)$$
is an isomorphism. 
\end{prop}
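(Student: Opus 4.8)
The plan is to combine three ingredients: the known computation of the integral singular cohomology ring of $\Mgb^{\an}$ in low degrees, the vanishing $H^1(\Mgb^{\bbC},\cO_{\Mgb^{\bbC}})=0$, and the exponential sheaf sequence for the complex analytic DM stack $\Mgb^{\an}$. First I would write down the exponential sequence of sheaves on $\Mgb^{\an}$,
\[
0\to \bbZ \to \cO_{\Mgb^{\an}}\xrightarrow{\exp} \cO_{\Mgb^{\an}}^{\times}\to 0,
\]
and take the associated long exact sequence in (stack) cohomology, which gives
\[
H^1(\Mgb^{\an},\cO_{\Mgb^{\an}}) \to H^1(\Mgb^{\an},\cO^{\times}_{\Mgb^{\an}}) \xrightarrow{c_1^{\an}} H^2(\Mgb^{\an},\bbZ) \to H^2(\Mgb^{\an},\cO_{\Mgb^{\an}}).
\]
Here $H^1(\Mgb^{\an},\cO^{\times}_{\Mgb^{\an}})=\Pic(\Mgb^{\an})$, which agrees with $\Pic(\Mgb^{\bbC})$ by GAGA for proper DM stacks (the stack $\Mgb^{\bbC}$ is proper and smooth, so analytic and algebraic line bundles coincide).

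The injectivity of $c_1^{\an}$ will follow once I know $H^1(\Mgb^{\an},\cO_{\Mgb^{\an}})=0$: this is the analytic avatar of $H^1(\Mgb^{\bbC},\cO_{\Mgb^{\bbC}})=0$, which holds by Corollary \ref{C:Pico}\eqref{C:Pico2} (and, as noted in Remark \ref{R:vanH1}, ultimately rests on Harer's vanishing via Arbarello--Cornalba), together with GAGA identifying the algebraic and analytic coherent cohomology groups. For surjectivity I would argue that the image of $c_1^{\an}$ is the kernel of $H^2(\Mgb^{\an},\bbZ)\to H^2(\Mgb^{\an},\cO_{\Mgb^{\an}})$, so it suffices to show that this last map is zero. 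One clean way: by the Hodge decomposition for the smooth proper DM stack $\Mgb^{\bbC}$ (equivalently for the orbifold $\Mgb^{\an}$), $H^2(\Mgb^{\an},\bbC)=H^{2,0}\oplus H^{1,1}\oplus H^{0,2}$ with $H^{0,2}=H^2(\Mgb^{\bbC},\cO_{\Mgb^{\bbC}})$, and the map $H^2(\Mgb^{\an},\bbZ)\to H^{0,2}$ factors through $H^2(\Mgb^{\an},\bbZ)\to H^2(\Mgb^{\an},\bbC)$ followed by projection to $H^{0,2}$; since the integral (hence rational) cohomology class of an algebraic line bundle is of type $(1,1)$, and since — by the results quoted in the paper (Arbarello--Cornalba for $g\geq 3$, Keel for $g=0$, Fulton--Olsson for $(g,n)=(1,1)$, together with the low-genus cases) — $H^2(\Mgb^{\bbC},\bbQ)$ is generated by tautological classes, which are algebraic, we get that all of $H^2(\Mgb^{\an},\bbZ)$ maps into the $(1,1)$-part and hence to zero in $H^{0,2}$. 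An alternative, perhaps more self-contained route is to invoke directly the known fact that $H^{\mathrm{odd}}(\Mgb^{\an},\bbZ)$ in low degree and the structure of $H^2$ are as predicted by the tautological picture (this is essentially the statement that will be used again in Proposition \ref{P:c1et}); I would phrase the argument so as to depend only on $H^1(\Mgb^{\an},\cO)=0$ and $H^2(\Mgb^{\an},\bbZ)$ being generated by Chern classes of line bundles.

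Finally I would remark that once $c_1^{\an}$ is shown to be injective with image equal to $\ker\big(H^2(\Mgb^{\an},\bbZ)\to H^2(\Mgb^{\an},\cO)\big)$, the surjectivity statement is equivalent to the claim that every integral degree-$2$ class is represented by a line bundle, which by the Main Corollary (and its complex-analytic counterpart) together with the identification $\Pic(\Mgb^{\bbC})\cong\Lambda_{g,n}$ reduces to a rank count: $\mathrm{rk}\,\Pic(\Mgb^{\bbC})=\mathrm{rk}\,\Lambda_{g,n}$ must equal $\mathrm{rk}\,H^2(\Mgb^{\an},\bbZ)$, and the torsion of $H^2(\Mgb^{\an},\bbZ)$ must vanish (consistent with $\Pic(\Mgb^{\bbC})$ being torsion-free in characteristic $0$). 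The main obstacle is therefore not the exponential-sequence formalism but the input on $H^2(\Mgb^{\an},\bbZ)$: one needs that its rank matches that of the tautological group and that it is torsion-free (equivalently, that $H^3(\Mgb^{\an},\bbZ)$ has no $2$-torsion obstructing the identification) — this is exactly where the topological results of Harer and Arbarello--Cornalba, extended to $\Mgb$, enter, and where care is needed for small $g$ and $n$.
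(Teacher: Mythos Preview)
Your approach is essentially the same as the paper's: GAGA to identify $\Pic(\Mgb^{\bbC})$ with $H^1(\Mgb^{\an},\cO^*)$, then the exponential long exact sequence, then the vanishing of $H^1$ and $H^2$ of $\cO_{\Mgb^{\an}}$. The only real difference is in the surjectivity step. The paper simply records the vanishing
\[
H^1(\Mgb^{\an},\cO_{\Mgb^{\an}})=H^2(\Mgb^{\an},\cO_{\Mgb^{\an}})=0,
\]
citing \cite[Thm.~2.1, Thm.~2.2]{AC98} together with the Hodge decomposition, which makes $c_1^{\an}$ an isomorphism in one line. Your first route for surjectivity (all of $H^2(\Mgb^{\an},\bbQ)$ is generated by algebraic classes, hence of type $(1,1)$, hence $H^{0,2}=0$) is in fact exactly how one extracts $H^2(\cO)=0$ from \cite{AC98} plus Hodge; you just stop one step short of stating the vanishing itself. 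Your alternative routes---rank comparisons between $\Lambda_{g,n}$ and $H^2(\Mgb^{\an},\bbZ)$, torsion-freeness of $H^2$---are unnecessary detours once $H^2(\cO)=0$ is in hand, and they introduce extra inputs you do not actually need.
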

\begin{proof}
By the GAGA theorem for the Picard group (see \cite[Sec. 5.2]{Toe}), we have that 
$$\Pic(\Mgb^{\bbC})\cong \Pic(\Mgb^{\an})=H^1(\Mgb^{\an}, \cO^*_{\Mgb^{\an}}).$$
We now conclude using the long exact sequence associated to the exponential sequence
$$
0\to \bbZ\xrightarrow{\cdot 2\pi i} \cO_{\Mgb^{\an}}\xrightarrow{\exp} \cO^*_{\Mgb^{\an}}\to 0,
$$
together with the vanishing
$$
H^1(\Mgb^{\an},\cO_{\Mgb^{\an}})=H^2(\Mgb^{\an},\cO_{\Mgb^{\an}})=0,
$$
which follow from  \cite[Thm. 2.1, Thm. 2.2]{AC98} and the Hodge decomposition (see also \cite[Thm. 3.2]{OS}). 
%SPIEGAZIONE DETTAGLIATA
%In this case, there are two known proofs, both of which relies on Harer's vanishing theorem \cite{Har86}. 
%The first proof  follows from the Hodge decomposition 
%$$
%\begin{sis} 
%& H^1(\Mgb^{\bbC},\bbC)=H^1(\Mgb^{\bbC},\cO_{\Mgb^{\bbC}})\oplus H^0(\Mgb^{\bbC}, \Omega^1_{\Mgb^{\bbC}}),\\
%& H^2(\Mgb^{\bbC},\bbC)=H^2(\Mgb^{\bbC},\cO_{\Mgb^{\bbC}})\oplus H^1(\Mgb^{\bbC}, \Omega^1_{\Mgb^{\bbC}})\oplus H^0(\Mgb^{\bbC}, \Omega^2_{\Mgb^{\bbC}}),\\
%\end{sis} 
%$$
%together with the results of Arbarello-Cornalba \cite[Thm. 2.1, Thm. 2.2]{AC98} who proved that $H^1(\Mgb^{\bbC},\bbQ)=0$ and $H^2(\Mgb^{\bbC},\bbQ)$ is generated by algebraic classes, which then implies that 
%$H^2(\Mgb^{\bbC},\bbC)= H^1(\Mgb^{\bbC}, \Omega^1_{\Mgb^{\bbC}})$.

%The second proof is due to Okawa-Sano \cite[Thm. 3.2]{OS} and uses the Hodge symmetry 
%$$
%\begin{sis} 
%& \dim H^1(\Mgb^{\bbC},\cO_{\Mgb^{\bbC}})=\dim H^0(\Mgb^{\bbC}, \Omega^1_{\Mgb^{\bbC}}), \\
%& \dim H^2(\Mgb^{\bbC},\cO_{\Mgb^{\bbC}})= \dim H^0(\Mgb^{\bbC}, \Omega^2_{\Mgb^{\bbC}}).
%\end{sis}
%$$
\end{proof}

We now investigate the \'etale first Chern class of $\Mgb^{k}$ with values  on the \'etale second cohomology group of  $\Mgb^{k}$ (see \cite{Ols07} for a discussion of \'etale cohomology of algebraic stacks).

\begin{prop}\label{P:c1et}
Let $k$ be an algebraically closed field with $\char(k)\neq 2$ and let $l$ be a prime different from $\char(k)\neq 2$. The \'etale first Chern class 
$$c_1^{et}:\Pic(\Mgb^{k})_{\bbZ_l} \to H_{et}^2(\Mgb^{k},\bbZ_l)$$
is an isomorphism. 
\end{prop}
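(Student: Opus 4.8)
The plan is to pass to $\ell$-adic coefficients via the Kummer sequence and then transport the answer from the complex-analytic case (Proposition~\ref{P:c1an}) by smooth--proper base change; the upshot is that, once $\Pic(\Mgb^k)$ is known to be free of the expected finite rank $\rho$ (Remark~\ref{R:rkPic}), the only possible obstruction to $c_1^{et}$ being an isomorphism is an $\ell$-divisible piece of the Brauer group, which a dimension count rules out. Concretely: the Kummer sequence $1\to\mu_{l^n}\to\Gm\xrightarrow{l^n}\Gm\to1$ on the lisse-\'etale site of $\Mgb^k$, together with $H^1_{et}(\Mgb^k,\Gm)=\Pic(\Mgb^k)$, gives short exact sequences
\[
0\to\Pic(\Mgb^k)/l^n\to H^2_{et}(\Mgb^k,\mu_{l^n})\to \Br(\Mgb^k)[l^n]\to0,\qquad \Br(\Mgb^k):=H^2_{et}(\Mgb^k,\Gm).
\]
By the Main Corollary --- this is the only place where $\char k\neq2$ (or $g\le5$) enters --- $\Pic(\Mgb^k)$ is free of finite rank $\rho$, so $H^1_{et}(\Mgb^k,\mu_{l^n})=\Pic(\Mgb^k)[l^n]=0$ and the tower $\{\Pic(\Mgb^k)/l^n\}_n$ is Mittag--Leffler; passing to the inverse limit over $n$ (and identifying $\bbZ_l(1)\cong\bbZ_l$, harmless since $k=\ov k$ and $l\neq\char k$) produces an exact sequence of finitely generated $\bbZ_l$-modules
\[
0\to\Pic(\Mgb^k)\otimes_{\bbZ}\bbZ_l\xrightarrow{\ c_1^{et}\ } H^2_{et}(\Mgb^k,\bbZ_l)\to T_l\Br(\Mgb^k)\to0 ,
\]
in which the first arrow is precisely the $\ell$-adic first Chern class. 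Hence $c_1^{et}$ is automatically injective, and the proposition reduces to the vanishing of the cokernel $T_l\Br(\Mgb^k)$.

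To get that vanishing I would note that $T_l\Br(\Mgb^k)$ is a quotient of the finitely generated $\bbZ_l$-module $H^2_{et}(\Mgb^k,\bbZ_l)$ (finiteness of $\ell$-adic cohomology of the proper DM stack $\Mgb^k$), hence itself finitely generated, and it is torsion-free as it is a Tate module; thus it is free, and it is enough to see $T_l\Br(\Mgb^k)\otimes_{\bbZ_l}\bbQ_l=0$, i.e.\ $\dim_{\bbQ_l}H^2_{et}(\Mgb^k,\bbQ_l)=\rk_{\bbZ}\Pic(\Mgb^k)$. Both sides equal $\rho$: on the left, $\rk_{\bbZ}\Pic(\Mgb^k)=\dim_{\bbQ}\Pic(\Mgb^k)_{\bbQ}=\rho$ by the Main Corollary and the computation of $\Pic(\Mgb^k)_{\bbQ}$ (Arbarello--Cornalba over $\bbC$, Moriwaki in general); on the right, since $\Mgb\to\Spec\bbZ$ is smooth and proper, smooth--proper base change for the \'etale cohomology of DM stacks shows that $\dim_{\bbQ_l}H^i_{et}(\Mgb^{\ov s},\bbQ_l)$ is independent of the geometric point $\ov s$ of $\Spec\bbZ[1/l]$, and taking $\ov s=\Spec\bbC$ and invoking the Artin comparison theorem identifies it with $\dim_{\bbQ}H^2(\Mgb^{\an},\bbQ)$, which is $\rho$ because $c_1^{\an}$ is an isomorphism onto $H^2(\Mgb^{\an},\bbZ)$ by Proposition~\ref{P:c1an}. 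Therefore $T_l\Br(\Mgb^k)=0$ and $c_1^{et}$ is an isomorphism.

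The Kummer- and limit-bookkeeping, and keeping track of the Tate twist, are routine. The part I expect to be most delicate to write up carefully is the invocation of the three ``background'' theorems for Deligne--Mumford stacks used above --- finiteness of $\ell$-adic cohomology, smooth--proper base change, and the Artin comparison theorem --- which are standard but need to be cited in the correct generality (for instance through a smooth atlas together with cohomological descent, through the coarse moduli space, or from the literature on \'etale cohomology of algebraic stacks). The only genuinely non-formal input is Proposition~\ref{P:c1an} itself, which (via Harer vanishing) pins down the second Betti number on the complex side; with that available, everything else is transport of structure along base change.
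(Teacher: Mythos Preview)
Your argument is correct. The paper's proof is organized differently: rather than going through the Kummer sequence and the Brauer group, it reduces directly to $k=\bbC$ by transporting the \emph{isomorphism} $c_1$ itself along a specialization. Concretely, over a strictly Henselian DVR $R$ with residue field $k$ and fraction field of characteristic~$0$, the paper shows (using local acyclicity on a smooth atlas together with proper base change for stacks) that the cospecialization maps on $H^2_{et}(-,\bbZ_l)$ between the geometric special and generic fibres are isomorphisms; the Main Theorem gives the same for $\Pic$, and compatibility with $c_1$ then reduces everything to Proposition~\ref{P:c1an} via Artin comparison. Your route instead extracts injectivity of $c_1^{et}$ for free from Kummer, identifies the cokernel with $T_l\Br(\Mgb^k)$, and kills it by a rank count --- which again hinges on smooth--proper base change and Artin comparison to know $\dim_{\bbQ_l}H^2_{et}$. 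So the deep inputs are the same; what you gain is the extra information $T_l\Br(\Mgb^k)=0$, and in fact your rank argument only uses $\rk\Pic(\Mgb^k)=\rho$ (which already follows from Theorem~\ref{T:Pic-taut} and the rational computation of Arbarello--Cornalba/Moriwaki) rather than the full freeness statement, so the hypothesis $\char(k)\neq 2$ is less essential in your version than you indicate. What the paper's approach buys is that one never has to name the Brauer group or worry about the $\varprojlim$ bookkeeping, and the specialization step makes the reliance on the Main Theorem (not just the rational Picard group) more transparent.
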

\begin{proof}
We will distinguish two cases.

\un{I Case:} $\char(k)=0$.

Since both the domain and codomain of $c_1^{et}$ are compatible with base change of algebraically closed fields, it is enough to treat the case $k=\bbC$. 
In this case, the result follows from the fact that $c_1^{\an}$ is an isomorphism by Proposition  \ref{P:c1an}, together with the Artin's comparison theorem \footnote{the case of smooth complex schemes is treated in \cite[III, Thm. 3.12]{Mi}; the case of a smooth complex stack $\cX$ follows from the case of schemes using that the \'etale cohomology $H^{\bullet}_{et}(\cX,-)$ (resp. the singular cohomology $H^{\bullet}(\cX^{\an},-)$)  of a constant sheaf can be computed using a simplicial scheme $X_{\bullet}$ (resp. a simplicial $X_{\bullet}^{\an}$ topological space) obtained from an atlas $X\to \cX$ (see \cite[Thm 4.7]{Ols07} for \'etale cohomology and \cite{Beh04} for singular cohomology).}
\begin{equation*}
H^2(\Mgb^{\an},\bbZ)\otimes \bbZ_l \cong H^2_{et}(\Mgb^{\bbC}, \bbZ_l),
\end{equation*}
which is compatible with the cycle map. 

\un{II Case:} $\char(k)=p>0$.

%We claim that $H^2(\Mgb^k,\mathbb Z_l)$ is torsion-free. Assume that the claim holds. By \cite[Theorem 5.1]{Mor01}, we already know that $c_1^{et}$ is bijective after tensoring with $\mathbb Q_l$. Then $c_1^{et}$ is injective, because the domain is torsion-free. Fix a class $\nu\in H^2(\Mgb^k,\mathbb Z_l)$. By \emph{loc.cit}, the class $\nu$ is a $\mathbb Q_l$-linear combination of the first Chern classes of generators of $\Pic(\Mgb^k)$. By looking at the pull-backs of $\nu$ along the families of curves introduced in \cite[proof of Prop. 4.1]{Mor01} and the family $\Lambda_h$ in \cite[p. 155]{AC87}\Roberto{questa famiglia non sono sicuro esista in char pos: l unico problema e' l esistenza del Lefschetz pencil (serve Bertini?)}, it can be shown that $\nu$ must be a $\mathbb Z_l$-linear combination of the first Chern classes of line bundles, i.e. $c_1^{et}$ is surjective.
%
We may suppose that $k$ is the residue field of a strictly Henselian ring $R$ with field of fractions of characteristic zero. Let $K$ be an algebraic closure of the field of fractions of $R$. Consider the smooth and proper morphism $\Mgb^R\to \Spec R$ and the cartesian diagram
\begin{equation}\label{E:cospec}
\begin{tikzcd}
\Mgb^{K}\ar{r}{g}\ar{d}&\Mgb^R\ar{d}{\pi}&\ar{l}[swap]{\iota}\ar{d}\Mgb^k\\
\Spec K\ar{r}&\Spec R&\ar{l}\Spec k.
\end{tikzcd}\end{equation} We want to show that the restriction maps
$$
H^i (\Mgb^R,\mathbb Z/m\mathbb Z)\xrightarrow{g^*} H^i(\Mgb^K,\mathbb Z/m\mathbb Z),\quad H^i (\Mgb^R,\mathbb Z/m\mathbb Z)\xrightarrow{\iota^*} H^i(\Mgb^{k},\mathbb Z/m\mathbb Z)
$$
are isomorphisms for any $m$ coprime with $p$. It is a well-known fact for schemes, we now explain how to extend the result to stacks. First, we claim that $g_*\mathbb Z/m\mathbb Z=\bbZ/m\mathbb Z$ and $R^ig_*\mathbb Z/m\mathbb Z=0$ for $i>0$. It can be checked on a smooth atlas of $U\to \Mgb^R$. Since a smooth morphism of schemes is locally acyclic, the claim follows by \cite[Chap V, Lemme 1.5]{SGA412} (here we are using $m$ coprime with $p$). In particular, the Leray spectral sequence attached to $g$ and $\mathbb Z/m\mathbb Z$ degenerates. Hence, the restriction map $g^*$ is an isomorphism. On the other hand, the cohomology group $H^i (\Mgb^R,\mathbb Z/m\mathbb Z)$ is canonically isomorphic to the stalk of $R^i\pi_*(\mathbb Z/m\mathbb Z)$ at the special point. By proper base change theorem for stacks (see \cite[Theorem 1.3]{Ols05}) applied to the second square in \eqref{E:cospec}, the restriction map $\iota^*$ is an isomorphism. In particular, we obtain the isomorphisms 
$$
H^2 (\Mgb^R,\mathbb Z_l)\xrightarrow{g^*} H^2(\Mgb^K,\mathbb Z_l),\quad H^2 (\Mgb^R,\mathbb Z_l)\xrightarrow{\iota^*} H^2(\Mgb^{k},\mathbb Z_l).
$$
By Theorem A, the analogous restrictions for Picard groups are also isomorphisms. Then, Case II follows from Case I together with the fact that the restriction maps commute with the first Chern classes.
\end{proof}

The final topic we address in this section is the computation of the divisor class group $\Cl(\ov M_{g,n}^k)$ of the coarse moduli space $\phi:\Mgb^k\to \ov M_{g,n}^k$,  which is an integral normal projective $k$-variety with finite quotient singularities  (see \cite{Mum77, Gie82}). 

Observe that, if $(\ov M_{g,n}^k)^{*}$ is an open and smooth subset of $\ov M_{g,n}^k$ whose complement has codimension at least two, the restriction homomorphism gives an isomorphism
\begin{equation}\label{E:rest}
	\res:\Cl(\ov M_{g,n}^k)\xrightarrow{\cong} \Cl((\ov M_{g,n}^k)^{*})=\Pic((\ov M_{g,n}^k)^{*}). 
\end{equation}
Moreover, since $\phi_*(\cO_{\Mgb^k})=\cO_{\ov M_{g,n}^k}$, the pull-back along the morphism $\phi$ gives an injective morphism 
\begin{equation}\label{E:phi*}
	\phi^*:\Pic((\ov M_{g,n}^k)^{*})\hookrightarrow \Pic(\phi^{-1}(\Pic((\ov M_{g,n}^k)^{*})))=\Pic(\Mgb^k). 
\end{equation}
Hence, by combining  \eqref{E:rest} and \eqref{E:phi*}, we get an injective homomorphism 
\begin{equation}\label{E:phibar}
	\wt\phi^*:\Cl(\ov M_{g,n}^k)\xrightarrow[\res]{\cong} \Pic((\ov M_{g,n}^k)^{*}) \stackrel{\phi^*}{\hookrightarrow} \Pic(\Mgb^k).
\end{equation} 
The image of $\wt\phi^*$ is described in the following result. 

\begin{prop}\label{P:Cl}
	Let $k$ be an algebraically closed field with $\char(k)\neq 2$ and let $g\geq 1$. The divisor class group $\Cl(\ov M_{g,n}^k)$ is identified, via the injective homomorphism $\wt\phi^*$ of \eqref{E:phibar}, with the subgroup of $\Pic(\Mgb^k)$ generated by 
	\begin{itemize}
		\item $\{2\lambda, \lambda+\delta_{1,\emptyset}, \{\delta_{a,A}: (a,A)\neq (1,\emptyset), (g-1,[n])\} \}$ if $g+n\geq 4$ or $(g,n)=(3,0)$; 
		\item $\{2\lambda, 2\psi_1, \lambda+\delta_{1,\emptyset}, \delta_{\irr}=10\lambda-2\delta_{1,\emptyset}\}$  if $(g,n)=(2,1)$;
		\item $\{2\lambda, \delta_{\irr}=12\lambda, 2\psi_1=2\psi_2, 2\delta_{1,\emptyset}=2\psi_1-2\lambda\}$ if $(g,n)=(1,2)$;
		\item $\{2\lambda, 2\delta_{1}, \delta_{\irr}=10\lambda-2\delta_{1}\}$ if $(g,n)=(2,0)$;
		\item $\{\delta_{\irr}=12\lambda=12\psi_1\}$ if $(g,n)=(1,1)$. 
	\end{itemize}
	In particular, we have that 
	$$
	\frac{\Pic(\Mgb^k)}{\wt\phi^*(\Cl(\ov M_{g,n}^k))}\cong 
	\begin{sis}
		\bbZ/2\bbZ & \: \text{if }g+n\geq 4 \: \text{or }(g,n)=(3,0); \\
		\bbZ/2\bbZ \times \bbZ/2\bbZ & \: \text{if }(g,n)=(2,1), (1,2), (2,0);  \\
		\bbZ/12\bbZ & \text{ if }(g,n)=(1,1). 
	\end{sis} 
	$$
\end{prop}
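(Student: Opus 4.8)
Proof proposal. The plan is to compute the image of $\wt\phi^*$ by identifying $\Cl(\ov M_{g,n}^k)$ with the subgroup of $\Pic(\Mgb^k)$ consisting of line bundles that descend to the coarse space over its smooth locus, and then to check the descent condition prime divisor by prime divisor, exactly along the lines of \cite[\S 2]{AC87}. First I would set up the criterion. Put $\cU:=\phi^{-1}((\ov M_{g,n}^k)_{\sm})$. Since $\ov M_{g,n}^k$ is normal, $(\ov M_{g,n}^k)_{\sm}$ and $\cU$ are obtained from $\ov M_{g,n}^k$ and $\Mgb^k$ by removing closed subsets of codimension $\geq 2$; as $\Mgb^k$ and $(\ov M_{g,n}^k)_{\sm}$ are regular, we get $\Pic(\Mgb^k)=\Pic(\cU)$ and $\Cl(\ov M_{g,n}^k)=\Pic((\ov M_{g,n}^k)_{\sm})=\Pic((\ov M_{g,n}^k)_{\sm})$, so that $\wt\phi^*$ identifies $\Cl(\ov M_{g,n}^k)$ with the subgroup $\Pic(\Mgb^k)^{\mathrm{desc}}$ of those $\cL$ whose restriction to $\cU$ is a pull-back from $(\ov M_{g,n}^k)_{\sm}$. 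The locus of $\Mgb^k$ over which the inertia (i.e.\ the automorphism group of the objects) acts non-trivially on $\cL$ is closed; if it meets $\cU$ only in codimension $\geq 2$ then $\cL$ descends over a big open of $(\ov M_{g,n}^k)_{\sm}$ and, by regularity, extends uniquely over all of it. Hence $\cL\in\Pic(\Mgb^k)^{\mathrm{desc}}$ if and only if for every codimension-$\leq 1$ point $\xi$ of $\Mgb^k$ the group $\Aut(\xi)$ acts trivially on the fibre of $\cL$ at the residual gerbe of $\xi$; and since no prime divisor of $\Mgb^k$ can lie in the codimension-$\geq 2$ bad locus, it suffices to test the generic point of $\Mgb^k$ and the generic points of its prime divisors.

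Next, using the well-known description of the locus of curves with automorphisms in $\cM_{g,n}^k$ and $\Mgb^k$ (as in \cite[\S 2]{AC87}), I would list, for each $(g,n)$, the codimension-$\leq 1$ loci carrying a nontrivial generic inertia group and compute its action on the tautological bundles. In the generic case $g+n\geq 4$, or $(g,n)=(3,0)$: the generic stable $n$-pointed genus-$g$ curve has no automorphisms; every locus of curves with automorphisms inside $\cM_{g,n}^k$ has codimension $\geq 2$, with the only exception, when $g=3$ and $n=0$, of the hyperelliptic locus, on which the hyperelliptic involution acts trivially on $\lambda$ (its action on sections of the dualizing sheaf factors through the canonical $2:1$ map to a rational curve); and among the boundary divisors the only one with nontrivial generic inertia is $\Delta_{1,\emptyset}=\Delta_{g-1,[n]}$, whose generic point is a curve carrying all the marked points with an elliptic tail attached at a single point, with automorphism group the $\bbZ/2$ generated by the elliptic involution $\iota$ of the tail. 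Now $\iota$ acts by $-1$ on $\lambda$ (it acts by $-1$ on $H^0$ of the dualizing sheaf of the tail), by $-1$ on $\delta_{1,\emptyset}=\cO(\Delta_{1,\emptyset})$ (it acts by $-1$ on the tangent line of the tail at the node, hence on the normal line of $\Delta_{1,\emptyset}$), and trivially on every other tautological generator (the $\psi_i$ are cotangent lines at marked points lying on the other component, and the remaining boundary divisors avoid the generic point of $\Delta_{1,\emptyset}$). Therefore $\cL=a\lambda+\sum_{a,A} c_{a,A}\delta_{a,A}+c_{\irr}\delta_{\irr}$ descends if and only if $a+c_{1,\emptyset}$ is even, so $\Pic(\Mgb^k)^{\mathrm{desc}}$ is the kernel of the surjection $\Pic(\Mgb^k)\twoheadrightarrow\bbZ/2\bbZ$, $\cL\mapsto a+c_{1,\emptyset}\bmod 2$ (which is well defined on $\Pic(\Mgb^k)$ because it kills the relations \eqref{E:rela}), and this kernel is generated by $2\lambda$, $\lambda+\delta_{1,\emptyset}$ and all the remaining tautological generators, with cokernel $\bbZ/2\bbZ$.

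For the exceptional pairs one adds, to the contribution of $\Delta_{1,\emptyset}=\Delta_1$, exactly one more codimension-one locus. For $(g,n)=(2,0)$: the bi-elliptic divisor in $\cM_2^k$, on whose generic point the extra involution acts by $-1$ on $\lambda$ (it acts by $-1$ on the anti-invariant part of $H^0$ of the dualizing sheaf) and trivially on $\delta_1$, while the elliptic involution of a component on the generic point of $\Delta_1$ acts by $-1$ on both $\lambda$ and $\delta_1$. For $(g,n)=(2,1)$: the divisor $\{p_1$ is a Weierstrass point$\}$ in $\cM_{2,1}^k$, on which the hyperelliptic involution fixes $p_1$, hence acts by $-1$ on $\psi_1$ and trivially on $\lambda$; combined with the $\Delta_1$-contribution ($-1$ on $\lambda$ and on $\delta_{1,\emptyset}$), these are two independent parity conditions. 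For $(g,n)=(1,2)$: the divisor $\{p_2-p_1\in E[2]\}$ in $\cM_{1,2}^k$, whose inversion fixes $p_1$ and $p_2$ and so acts by $-1$ on $\lambda,\psi_1,\psi_2$ and trivially on $\delta_{1,\emptyset}$; combined with the $\Delta_{1,\emptyset}$-contribution this again gives two independent parities. In each of these three cases $\Pic(\Mgb^k)^{\mathrm{desc}}$ is cut out by two parity conditions and the cokernel is $(\bbZ/2\bbZ)^2$; the indicated generators then follow from the tautological presentations of Theorem \ref{T:Pic-taut} (and the Main Corollary). Finally, for $(g,n)=(1,1)$ I would argue directly from $\ov M_{1,1}^k\cong\bbP^1_k$ and $\Pic(\ov\cM_{1,1}^k)=\bbZ\langle\lambda\rangle$ with $\psi_1=\lambda$ and $\delta_{\irr}=12\lambda$: the generic elliptic curve and the nodal cubic (cusp) force $2\mid m$, the curve with $j=1728$ forces $4\mid m$, and the curve with $j=0$ forces $6\mid m$, so $m\lambda$ descends if and only if $12\mid m$, giving image $\langle\delta_{\irr}\rangle$ and cokernel $\bbZ/12\bbZ$.

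The main obstacle I expect is the bookkeeping in the exceptional cases: one must be certain that no codimension-one locus on which the automorphisms act non-trivially on some tautological bundle has been overlooked (the dimension counts for the Weierstrass, bi-elliptic and $2$-torsion loci, and the verification that the loci of curves with larger automorphism groups have codimension $\geq 2$, are delicate), and one must compute correctly the action of each involution on $\lambda$ — trivial for the hyperelliptic involution of a curve whose canonical map is $2:1$ onto a rational curve, but $-1$ for the extra bi-elliptic involution and for elliptic/inversion involutions. Establishing the descent criterion, including the codimension-$2$ extension step, is routine but must be stated carefully; and in characteristic $3$ the point $j=0=1728$ of $\ov\cM_{1,1}^k$ must be treated with its order-$12$ automorphism group, which still yields $12\mid m$.
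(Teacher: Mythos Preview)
Your overall strategy---checking the descent criterion at every codimension-$\leq 1$ point of $\Mgb^k$---coincides with the paper's approach. There is, however, a genuine error in your treatment of $(g,n)=(3,0)$. You claim the hyperelliptic involution $\iota$ acts trivially on $\lambda$ because the canonical map factors through $C\to\bbP^1$, but this factoring only shows that $\iota$ acts on $H^0(C,\omega_C)$ by a \emph{scalar}; it does not tell you the scalar is $+1$. In the model $y^2=f(x)$ the differentials $x^i\,dx/y$ form a basis of $H^0(C,\omega_C)$ and $\iota\colon y\mapsto -y$ negates each of them, so $\iota$ acts as $-\id$ on $H^0(C,\omega_C)$ and hence as $(-1)^g$ on $\lambda|_{[C]}=\Lambda^g H^0(C,\omega_C)$; for $g=3$ this is $-1$, not $+1$. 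The paper accordingly treats $(3,0)$ together with $(2,1)$ and $(1,2)$ in its Case~II (the cases $g+n=3$), where the hyperelliptic locus $H_{g,n}^0$ is a codimension-one component of the smooth locus on which $\iota$ acts as $(-1)^g$ on $\lambda$ and as $-1$ on each $\psi_i$. Your placement of $(3,0)$ in the generic case, with no condition coming from the hyperelliptic locus, is not supported by your own reasoning.

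For $(g,n)=(2,0)$ you also take a different route from the paper. The paper does not analyze the smooth locus of $\ov M_2$ directly; instead it invokes the external computation $\Cl(M_2^k)\cong\bbZ/5\bbZ$ from \cite{GV}, combines it with the exact sequence coming from the boundary, and uses $\wt\phi^*(D_{\irr})=\delta_{\irr}$ and $\wt\phi^*(D_1)=2\delta_1$. Your bielliptic-divisor argument is plausible in outline but, as written, incomplete: you would need to verify that the bielliptic locus lies in $(\ov M_2)_{\sm}$, that together with $D_1^0$ it exhausts the codimension-one strata of the smooth locus carrying inertia beyond the generic hyperelliptic $\bbZ/2$, and that the generic point of $\Delta_{\irr}$ (where the hyperelliptic involution of the irreducible nodal curve is the inertia) imposes no condition. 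None of this is addressed.
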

The above result was proved by Arbarello-Cornalba in \cite[Prop. 2]{AC87} for $g\geq 3$ and $k=\bbC$. The case $g=0$ is excluded by the above Proposition for trivial reasons: $\phi:\ov\cM_{0,n}^k\to \ov M_{0,n}^k$ is an isomorphism
(see \cite[Chap. XII, Prop. (2.5)]{GAC2}) and hence 
$\Cl(\ov M_{0,n}^k)=\Pic(\ov M^k_{0,n})\xrightarrow{\phi^*} \Pic(\ov \cM^k_{0,n})$ is an  isomorphism. 
\begin{proof}
	We will simply write $\Mgb$ for $\Mgb^k$ and $\ov M_{g,n}$ for $\ov M_{g,n}^k$. 
	Denote by $\ov M_{g,n}^0$ the open subset of $\ov M_{g,n}$ consisting of stable $n$-pointed curves with trivial automorphism group. Note that $\phi$ induces an isomorphism between $\phi^{-1}(\ov M_{g,n}^0)$ and $\ov M_{g,n}^0$, so that $\ov M_{g,n}^0\subseteq (\ov M_{g,n})_{\sm}$. 
	We now divide the proof in several cases.
	
	\un{I Case:} $g+n\geq 4$.
	
	In this case, let $(\ov M_{g,n})^*$ be the intersection between the smooth locus $(\ov M_{g,n})_{\sm}$ of $\ov M_{g,n}$ and the open subset $\ov M_{g,n}^0\cup D_{1,\emptyset}^0$ of $\ov M_{g,n}$, where $D_{1,\emptyset}$ is the boundary divisor of $\ov M_{g,n}$ consisting of curves with an (unpointed) elliptic tail and $D_{1,\emptyset}^0$ is the open subset of $D_{1,\emptyset}$ consisting of stable $n$-pointed curves of $D_{1,\emptyset}$
	whose unique non-trivial automorphism is the involution  $\iota$ which is the elliptic involution on the elliptic tail and the identity on the rest of the curve. The singular locus of $\ov M_{g,n}$ has codimension at least two, because the variety $\ov M_{g,n}$ is normal, and the complement of $\ov M_{g,n}^0\cup D_{1,\emptyset}^0$ has codimension at least two, by Proposition \ref{P:aut-nontriv}. Hence, the open subset $(\ov M_{g,n})^*$ is smooth and its complement has codimension at least two. In particular, we get an injective homomorphism as in \eqref{E:phibar}.
	%Versione originale:
	%	 Therefore, a line bundle $L$ on $\Mgb$ descends to a line bundle on $(\ov M_{g,n})_{\sm}$ if and only if, for any stable $n$-pointed curve $C\in D_{1,\emptyset}$, we have that the involution $\iota$ of $C$ acts as the identity on the fiber $L_{|C}$ of $L$ at $C$.
	Since $\char(k)\neq 2$, the coarse moduli space $\phi:\phi^{-1}((\ov M_{g,n})^*)\to (\ov M_{g,n})^*$ is a good moduli space in the sense of Alper \cite{Al}. By \cite[Thm. 10.3]{Al}, a line bundle $L$ on $\Mgb$ descends to a line bundle on $(\ov M_{g,n})^*$ if and only if, for any stable $n$-pointed curve $C\in D_{1,\emptyset}^0$, we have that the involution $\iota$ of $C$ acts as the identity on the fiber $L_{|C}$ of $L$ at $C$. As proved in \cite[p. 51]{HM82}, $\iota$ acts as $-1$ on $\lambda_{|C}$ and on $(\delta_{1,\emptyset})_{|C}$, and as $+1$ on the fibers above $C$ of all the other tautological line bundles of $\Pic(\Mgb)$ (see Theorem B). 
	This yields the conclusion in this case.

	\un{II Case:} $g+n=3$, i.e. $(g,n)=(3,0), (2,1), (1,2)$. 
	
	In this case we define 
	$$(\ov M_{g,n})^*=(\ov M_{g,n}^0\cup D_{1,\emptyset}^0 \cup \ov{H}_{g,n}^0)\cap (\ov M_{g,n})_{\sm},$$
	where $\ov{H}_{g,n}^0$ is the locus of stable $n$-pointed hyperelliptic curves $(C,p_1,\ldots, p_n)$ whose marked points (for $n=1,2$) are fixed by the hyperelliptic involution $\iota$ and whose automorphism group is generated by $\iota$. 
	 The singular locus of $\ov M_{g,n}$ has codimension at least two, because the variety $\ov M_{g,n}$ is normal, and the complement of $\ov M_{g,n}^0\cup D_{1,\emptyset}^0\cup \ov{H}_{g,n}^0$ has codimension at least two, by Proposition \ref{P:aut-nontriv}. Hence, the open subset $(\ov M_{g,n})^*$ is smooth and its complement has codimension at least two. In particular, we get an injective homomorphism as in \eqref{E:phibar}.
	%By Proposition \ref{P:aut-nontriv}, the open subset $(\ov M_{g,n})^*$ is smooth and its complement has codimension at least two. Hence, we get an injective homomorphism as in \eqref{E:phibar}.\eqref{E:phibar}.
	Therefore, arguing as in the previous case, a line bundle $L$ on $\Mgb$ descends to a line bundle on $(\ov M_{g,n})^*$ if and only if  for any stable $n$-pointed curve $C\in D_{1,\emptyset}^0\cup \ov{H}_{g,n}^0$, we have that 
	the involution $\iota$ of $C$ acts as the identity on the fiber $L_{|C}$ of $L$ at $C$. If $C\in D_{1,\emptyset}^0$ this gives exactly the conditions of the I Case. On the other hand, if  $C\in H_{g,n}^0$ then $\iota$ acts as $(-1)^g$ on $\lambda_{|C}=\Lambda^g H^0(C,\omega_C)$ (since $\iota$ acts as $-\id$ on $H^0(C,\omega_C)$) and as $-1$ on 
	$(\psi_i)_{|C}=(\omega_C)_{|p_i}$ for $1\leq i \leq n$ (since if a local equation for $C$ at $p_i$ is given by $y^2=f(x)$ with $p_i=(0,\alpha_i)$ then $(\omega_C)_{|p_i}$ is generated by ${\rm d} y$ and $\iota$ sends $y$ into $-y$), 
	and it acts as  $+1$  on the fibers above $C$ of all the other tautological line bundles of $\Pic(\Mgb)$ (see Theorem B). 
	This yields the conclusion in this case.

	\un{III Case:} $(g,n)=(1,1)$. 
	
	Since $\ov M_{1,1}=\bbP^1$, we have that $\Cl(\ov M_{1,1})=\Pic(\ov M_{1,1})\cong \bbZ$, generated by the point  $D_{\irr}$ corresponding to the nodal elliptic curve. Hence, we conclude using that (see Theorem A) 
	$$\Pic(\ov\cM_{1,1})=\frac{\bbZ[\lambda, \psi_1,\delta_{\irr}]}{(12\lambda-\delta_{\irr}, \lambda-\psi_1)},$$
	together with the fact that $\phi^*(\cO(D_{\irr}))=\delta_{\irr}$.
	
	\un{IV Case:} $(g,n)=(2,0)$. 
	
	Consider the commutative diagram with exact rows
	\begin{equation}\label{E:diag-g2}
		\begin{tikzcd}
			0 \ar{r} & \bbZ[\delta_{\irr}, \delta_1] \ar{r}  & \Pic(\ov\cM_2)=\frac{\bbZ[\lambda, \delta_{\irr}, \delta_1]}{(10\lambda=\delta_{\irr}+2\delta_1)} \ar{r} & \Pic(\cM_2)=\frac{\bbZ[\lambda]}{(10\lambda=0)}\ar{r} & 0\\
			0 \ar{r} & \bbZ[D_{\irr}, D_1] \ar{r} \ar[hookrightarrow]{u}{\wt\phi^*} & \Cl(\ov M_2)\ar{r}  \ar[hookrightarrow]{u}{\wt\phi^*} & \Cl(M_2)\ar{r} \ar[hookrightarrow]{u}{\wt\phi^*} & 0\\
		\end{tikzcd}
	\end{equation}
	where we have used Theorem A in the explicit presentation of $\Pic(\ov \cM_2)$, and  $\{D_{\irr}:=\phi(\Delta_{\irr}), D_1:=\phi(\Delta_1)\}$ are the boundary divisors of $\ov M_2$.
	Since $\Cl(M_2)=\bbZ/5\bbZ$ (see \cite[Cor. 3.8]{GV}), it follows that $\Cl(M_2)$ is identified,  via the injective homomorphism $\wt\phi^*$, with the subgroup of $\Pic(\cM_2)$ generated by $2\lambda$. 
	We conclude using  the commutative diagram \eqref{E:diag-g2} and the fact that $\wt\phi^*(D_{\irr})=\delta_{\irr}$ and $\wt\phi^*(D_1)=2\delta_1$.
\end{proof}

\begin{rmk}\label{R:gerbe}
	The stacks $\ov\cM_{1,1}^k$ and $\ov \cM_2^k$ are $\mu_2$-gerbes over their rigidifications $\rho_{1,1}:\ov\cM_{1,1}^k\to \ov \cM_{1,1}^k\fatslash \mu_2$ and $\rho_{2}:\ov\cM_{2}^k\to \ov \cM_{2}^k\fatslash \mu_2$
	that remove the generic involution of every curve in $\ov \cM_{1,1}^k$ and $\ov \cM_2^k$.
	Using similar arguments as in the proof of the above Proposition, it can be checked that 
	$$
	\Pic(\ov \cM_{1,1}^k\fatslash \mu_2)=\frac{\bbZ[2\lambda, 2\psi_1,\delta_{\irr}]}{(\delta_{\irr}=6(2\lambda), 2\lambda=2\psi_1)}\cong \bbZ[2\lambda]  \stackrel{\rho_{1,1}^*}{\hookrightarrow}\bbZ[\lambda]\cong 
	\frac{\bbZ[\lambda, \psi_1,\delta_{\irr}]}{(\delta_{\irr}=12\lambda, \lambda=\psi_1)}=\Pic(\ov\cM_{1,1}^k), 
	$$
	$$
	\Pic(\ov\cM_2^k\fatslash \mu_2)=\frac{\bbZ[\lambda, \delta_{\irr}, \delta_1]}{(10\lambda=\delta_{\irr}+2\delta_1)} \xrightarrow[\cong]{\rho_2^*} \Pic(\ov\cM_2^k).
	$$
	In particular, the gerbe $\rho_{1,1}$ is trivial while the gerbe $\rho_2$ is non-trivial (compare with  \cite[Prop. 4.7]{GV2}).  
\end{rmk}

\begin{prop}\label{P:aut-nontriv}Assume $g\geq 1$. Let $(\Mgb^k)^0$ be the open substack on $\Mgb^k$ consisting of stable $n$-pointed curves with trivial automorphism group and let $\ov{\mt X}_{g,n}^k$ be the union of divisors in the complement $\Mgb^k\setminus (\Mgb^k)^0$. Then, we have that
	$$
	\ov{\mt X}_{g,n}^k=\begin{cases}
		\Delta_{1,\emptyset},&\text{if }g+n\geq 4,\\
		\Delta_{1,\emptyset}\cup \ov{\mathcal H}^k_{g,n},&\text{if }g+n=3,
	\end{cases}
	$$
	where $\Delta_{1,\emptyset}$ is the boundary divisor of $\Mgb^k$ consisting of curves with an unpointed elliptic tail and $\ov{\mathcal H}_{g,n}^k$ is the closed substack of stable $n$-pointed hyperelliptic curves of genus $g$.
	
	Furthermore, the following facts hold true:
	\begin{enumerate}[(i)]
		\item the sublocus $\Delta_{1,\emptyset}^0\subset\Delta_{1,\emptyset}$, of those curves whose only non-trivial automorphism is the elliptic involution on the elliptic tail and the identity on the rest of the curve, is an open and dense substack;
		\item the sublocus $(\ov{\mathcal H}_{g,n}^k)^0\subset\ov{\mathcal H}_{g,n}^k$, of those curves whose only non-trivial automorphism is the hyperelliptic involution, is an open and dense substack.
	\end{enumerate} 
\end{prop}

\begin{proof}We remove the reference to the field $k$ from the notation. We first prove the following
	
	\textbf{Claim.} Let $\Mg^0$ be the open subset in $\Mg$ of smooth curves without non-trivial automorphisms and let $\mt Y_{g,n}$ be the union of irreducible components in $\Mg\setminus\Mg^0$ of codimension at most $1$. Then, we have the following equality
	\begin{equation}\label{E:cMg0}
		\mt Y_{g,n}=\begin{cases}
			\emptyset,&\text{if }g+n\geq 4,\\
			\mathcal H_{g,n}&\text{if }g+n\leq 3,
		\end{cases}
	\end{equation}
	where $\mt H_{g,n}$ is the moduli stack of smooth $n$-pointed hyperelliptic curves. Furthermore, the stack $\mt H_{g,n}$ coincides with $\Mg$, if $(g,n)=(1,1), (2,0)$, and it is a divisor, if $(g,n)=(3,0),(2,1)$. It contains an open and dense subset $\mt H_{g,n}^0$ consisting of $n$-pointed hyperelliptic curves whose automorphism group is generated by the hyperelliptic involution.
	
	We divide the proof of the claim in several cases depending on the genus. But, first, we need some preparation.
	Consider, the natural morphism $\pi:\Mg\to \mt M_{g,n-1}$, which forgets the last marking. It is a smooth morphism with geometrically integral fibers. Since $\pi$ is smooth, the substack $\mt Y_{g,n}$ is contained in the inverse image  $\pi^{-1}(\mt Y_{g,n-1})$. 
	Denote by $\mu:\mt Y_{g,n}\to\mt Y_{g,n-1}$ the restriction of the morphism $\pi$ to $\cY_{g,n}$. It is a quasi-finite morphism. Indeed, an automorphism $\varphi\in\Aut(C,p_1,\ldots,p_{n-1})$ in $\mt M_{g,n-1}$ lifts to an automorphism in $\Aut(C,p_1,\ldots,p_{n-1},p_n)$ if and only if $\varphi$ fixes $p_n$. Since the automorphism group of $C$ is a finite group and the fixed points of any automorphism of $C$ are a finite set, the morphism $\mu$ must be quasi-finite.
	
	In particular, by comparing their dimensions, we get the following criterion
	\begin{equation}\label{E:crit-dim}
		\text{if } \cY_{g,n-1} \text{ is either empty or a divisor in } \mathcal M_{g,n-1}, \text{then }		\cY_{g,m} \text{ is empty, for any }m\geq n.
	\end{equation}

	\underline{I Case:} $g=1$ (and $n\geq 1$).
	
	Assume $n=1$. In this case, any elliptic curve has the elliptic involution, i.e. $\mathcal H_{1,1}=\mathcal M_{1,1}$ and $\mathcal M_{1,1}^0=\emptyset$. Moreover, the locus $\mathcal H_{1,1}^0$ is a non-empty (and, so, dense) open substack of $\mathcal M_{1,1}$, because an elliptic curve with $j$-invariant different from either $0$ or $1728$ has the automorphism group generated by the elliptic involution (see \cite[Theorem III.10.1]{Si}). Assume $n=2$. Since the generic elliptic curve $C$ has only the elliptic involution as non-trivial automorphism, any lift of $C$ over $\mu:\cY_{1,2}\to\cY_{1,1}=\mathcal M_{1,1}$ must be an hyperelliptic $2$-pointed curve, i.e. $C\in \mt H^0_{1,2}$. Since $\mu$ is quasi-finite, we must have that $\cY_{1,2}$ is a divisor and it coincides with $\mt H_{1,2}$. The cases $n\geq 3$ follow from Criterion \eqref{E:crit-dim}.

	\underline{II Case:} $g=2$.
	
	The argument follows the same lines of the previous case. Assume $n=0$. In this case, any curve is hyperelliptic, i.e. $\mathcal H_{2}=\mathcal M_{2}$ and $\mathcal M_{2}^0=\emptyset$. The statement about $\mathcal H_{2}^0$ follows from \cite{Poonen}. Assume $n=1$. Since the generic curve $C$ in $\mt M_2$ admits only the hyperelliptic involution as non-trivial automorphism, any lift of $C$ over $\mu:\cY_{2,1}\to\cY_2$ must be an hyperelliptic $1$-pointed curve, i.e. $C\in \mt H^0_{2,1}$.  Since $\mu$ is quasi-finite, we must have that $\cY_{2,1}$ is a divisor and it coincides with $\mt H_{2,1}$. The cases $n\geq 2$ follow from Criterion \eqref{E:crit-dim}.
	
	\underline{III Case:} $g=3$.
	
	Assume $n=0$. The equality \eqref{E:cMg0} has been proved by Popp \cite[Theorem (2) at page 106]{Popp}. The statement about $\mt H_{3}^0$ follows from \emph{loc.cit.}, where it has been also proved that $\mt H_{3}$ is a divisor in $\mt M_3$. The cases $n\geq 1$ follow from Criterion \eqref{E:crit-dim}. 
	
	\underline{IV Case:} $g\geq 4$. 
	
	Assume $n=0$. The equality \eqref{E:cMg0} has been proved by Popp \cite[Theorem (1) at page 106]{Popp}. The cases $n\geq 1$ follow from Criterion \eqref{E:crit-dim}.
	
	We are now ready to prove the assertion of the proposition. 
	
	Assume first $g+n\geq 4$. By Claim, we know that $\ov{\cX}_{g,n}$ is contained in the boundary $\Delta:=\Mgb\setminus\Mg$ of singular curves. A generic point $(C,p_1,\ldots,p_n)$ in $\Delta$ consists of either an irreducible curve with a unique node $N$ or a curve with a unique separating node $N$, whose normalization is the disjoint union of two curves $C_1$ and $C_2$, where $C_1$, resp. $C_2$, is an $m$-pointed, resp. $(n-m)$-pointed, smooth curve of genus $i$, resp. $g-i$ (where $m\geq 2$, resp. $n-m\geq 2$, if $i=0$, resp. $i=g$). In any of these cases, the automorphism group of $(C,p_1,\ldots,p_n)$ is equal to the automorphism group of the normalization of $C$, which fixes the inverse images of the markings and the two points over the the node $N$. In other words, they may be described as the stabilizers of objects either in $\mt M_{g-1,n+2}$ or $\mt M_{i,m+1}\times\mt M_{g-i,n-m+1}$. Using the equality \eqref{E:cMg0}, one may deduce that the generic stable $n$-pointed curve in $\Delta$ has trivial automorphism group, except when $C=C_1\cup C_2$, with $C_1$ unpointed smooth curve of genus $1$, i.e. it is an object in $\Delta_{1,\emptyset}$. This concludes the proof of the first statement for the case $g+n\geq 4$. Arguing as above, we see that the automorphism group of a generic curve in $\Delta_{1,0}$ is equal to the automorphism group of a generic object $(C_1,C_2)$ in $\mathcal M_{1,1}\times \mathcal M_{g-1,n+1}$. By Claim, the automorphism group of the generic curve $C_1$, resp. $C_2$, is generated by the elliptic involution, resp. is trivial. Hence, $\Delta_{1,\emptyset}^0$ is open and dense in $\Delta_{1,\emptyset}$. The proof for $g+n=3$ follows the same arguments.
\end{proof}

\begin{rmk}When $\char(k)=0$, the open subsets $(\ov M_{g,n}^k)^*$ defined in the proof of Proposition \ref{P:Cl} for $g+n\geq 3$ coincide with the smooth locus of $\ov M_{g,n}$, i.e.
	$$
	(\ov M_{g,n}^k)_{\sm}=\begin{cases}
		\ov M_{g,n}^0\cup D_{1,\emptyset}^0&\text{if }g+n\geq 4,\\
		\ov M_{g,n}^0  \cup D_{1,\emptyset}^0\cup \ov{H}_{g,n}^0&\text{if }g+n=3,
	\end{cases}
	$$
	see \cite{Cor89}, \cite{Cor87}, \cite[Chap. XII, Prop. (2.5)]{GAC2}. As far as we know, there is no proof (or disproof) of the analogous statements in positive characteristic. For some partial results in this direction, see \cite{Popp}.
\end{rmk}

\section{Appendix: $p$-torsion of the Picard group space for algebraic stacks} \label{S:App}
%Let $(R,\mathfrak{m})$ be a complete \Filippo{Ho tolto Noetherian (che \`e ovvio) e ho messo complete (che serve nel nel Teorema 1.7)}DVR, with field of fractions $K$ of characteristic $0$ and algebraically closed residue field $k:=R/\mathfrak m$ of characteristic $p>0$ and let $e\geq 1$ be the absolute ramification index (i.e. $(p)=\mathfrak m^e$). 
 
The goal of this Appendix is to show the following result, which is a generalization to algebraic stacks of a result of Raynaud \cite[Thm. 4.1.2]{Ray79}.

\begin{teo}\label{T:main-Ray}
Let $(R,\mathfrak{m})$ be a complete DVR, with field of fractions $K$ of characteristic $0$ and algebraically closed residue field $k:=R/\mathfrak m$ of characteristic $p>0$ and let $e\geq 1$ be the absolute ramification index (i.e. $pR=\mathfrak m^e$). 
Let $\mathcal X$ be an algebraic stack and $\pi:\mathcal X\to\Spec(R)$ be a proper and cohomologically flat in degree zero morphism. Consider the Picard group space\footnote{The assumptions on $\pi$ guarantee that the functor $\PIC_{\mathcal X/R}^{\tau}$ is representable by an algebraic space of finite type over $\Spec(R)$ and its formation commutes with base change, see Theorem \ref{T:Pictau}\eqref{T:Pictau1}.}
	$$
	\Pi^{\tau}_{\cX/R}:\PIC_{\mathcal X/R}^{\tau}\to \Spec(R).
	$$
If $\Pi^{\tau}_{\cX/R}$ is equidimensional\footnote{which is always true if the special fiber of $\pi$ (and hence also the geometric generic fiber) is  normal, see Theorem \ref{T:Pictau}\eqref{T:Pictau3}}, then we have that
\begin{enumerate}[(i)]
	\item if $e<p-1$, the morphism $\Pi^{\tau}_{\cX/R}$ is flat,
	\item if $e\leq p-1$, the morphism $\Pi^{\tau}_{\cX/R}$ is flat along the zero section.
\end{enumerate}
\end{teo}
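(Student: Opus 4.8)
The strategy is to reduce the statement to Raynaud's theorem \cite[Thm. 4.1.2]{Ray79}, whose proof concerns a group space over $R$ and should go through once the scheme-theoretic inputs are replaced by their stacky analogues from Section \ref{Sec:PicSpa}. First I would set $G:=\PIC^{\tau}_{\cX/R}$, which by Theorem \ref{T:Pictau}\eqref{T:Pictau1} is a commutative group algebraic space of finite type over $R$ whose formation commutes with base change, and which is equidimensional over $R$ by hypothesis, say of relative dimension $d$. Over the discrete valuation ring $R$, flatness of $G$ is equivalent to $G$ coinciding with the schematic closure $H:=\ov{G_K}\subseteq G$ of its generic fibre (equivalently, $\cO_G$ is $\mathfrak{m}$-torsion-free), and flatness along the zero section is the same statement for the local ring of $G$ at the origin $e_k\in G_k$. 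Since $H$ is an $R$-flat closed sub-group-space with $H_K=G_K$ (the schematic closure of a sub-group-space of the generic fibre over a DVR is again a sub-group-space), equidimensionality gives $\dim H_k=\dim G_k=d$, so the fppf quotient $Q:=G/H$ is $0$-dimensional with $Q_K=0$, i.e.\ a finite $k$-group scheme; thus one is reduced to showing $Q=0$ when $e<p-1$, and that $Q$ is étale --- equivalently $H$ is open in $G$, equivalently $G$ is flat along $e_k$ --- when $e\le p-1$.

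Next I would pin down $Q$. It is a $p$-group: for a torsion line bundle $L$ on $\cX_k$ of order prime to $p$, the obstructions to lifting it along the thickenings $\cX_{R/\mathfrak{m}^n}$ lie in $k$-vector spaces built from $H^2(\cX_k,\cO_{\cX_k})$ --- by the deformation theory of line bundles on algebraic stacks \cite[Thm. 3.1.3, Thm. 4.1.3]{Br1} --- and are killed by the unit $\mathrm{ord}(L)$, so (correcting each lift within the $H^1$-torsor of lifts to keep it torsion) $L$ lifts to $\cX_R$ by Grothendieck's existence theorem for stacks \cite[Thm. 1.4]{Ols05}; the resulting $R$-point of $G$ factors through the flat subspace $H$, so $[L]$ dies in $Q$. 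Hence $Q^{o}$ is infinitesimal and $Q/Q^{o}$ is a finite constant $p$-group. I would then pass to the formal completion $\wh G$ of $G$ along the zero section: its flat part is $\wh H\cong\Spf R[[x_1,\dots,x_d]]$, its defect is $\wh Q=Q^{o}$, and its Lie algebra is $H^1(\cX,\cO_{\cX})$ with $H^2(\cX,\cO_{\cX})$ as obstruction module (as for schemes --- this uses \cite[Thm. 3.1.3, Thm. 4.1.3]{Br1} together with the cohomological flatness of $\pi$ in degree zero). With these stacky analogues available, Raynaud's argument should run verbatim: the formal logarithm identifies $\wh G$ over $R\otimes\bbQ$ with the additive formal group on $H^1(\cX,\cO_{\cX})$, its $p$-power denominators are controlled by $e$, and it carries $\wh G$ isomorphically onto $\wh H$ exactly when $e\le p-1$, giving $Q^{o}=0$; for $e<p-1$ the Oort--Tate/Raynaud classification of finite flat $R$-group schemes of $p$-power order additionally forbids the constant part $Q/Q^{o}$ from being nontrivial, giving $Q=0$.

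I expect the first two paragraphs to be routine given Section \ref{Sec:PicSpa}; the main obstacle is the last step --- checking that Raynaud's dévissage, which manipulates the formal group of $\PIC$ along the identity, the relative Frobenius, and finite flat $R$-group schemes, uses only intrinsic properties of the group space $G$, of the $R$-modules $H^{i}(\cX,\cO_{\cX})$, and of the base ring $R$, all of which are now in place: representability and base change of $\PIC^{\tau}$ (Theorem \ref{T:Pictau}), tangent and obstruction modules for line bundles (\cite{Br1}), Grothendieck existence (\cite{Ols05}), and the classification of finite flat group schemes, which is a statement about $R$ alone. Granting this, nothing stack-specific remains, and the two assertions follow by specialising Raynaud's conclusion to $e<p-1$ and to $e\le p-1$ respectively.
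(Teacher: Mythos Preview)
Your proposal contains a genuine gap at the decisive step. The paper explicitly names the obstacle to transporting Raynaud's argument to stacks: an algebraic stack can have infinite cohomological dimension, so even for proper $\pi$ the derived pushforward $R\pi_*\cO_{\cX}$ need not be a perfect complex over $R$. Raynaud's method (his Corollary 3.3.3, which is what actually yields flatness of $\widehat{\PIC}$) does not operate on the single formal group $\widehat{G}$ but on a bounded complex $P^{\bullet}$ of smooth formal $R$-groups representing $R^{\bullet}\pi_*\widehat{\mathbb G}_{m,\cX}$, and it requires the Lie complex $\lie(P^{\bullet})$ --- which computes $R^{\bullet}\pi_*\cO_{\cX}$ --- to be perfect. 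Your claim that the argument ``uses only intrinsic properties of the group space $G$, of the $R$-modules $H^{i}(\cX,\cO_{\cX})$, and of the base ring $R$'' is therefore inaccurate: the full complex structure, and in particular its boundedness, is essential input. The paper's real work is Theorem \ref{Ray273}: it builds a \v{C}ech complex of Cartier modules on an affine hypercover of $\cX$, truncates it in degree $n+1$, and verifies (nontrivially, using that $R$ is a DVR so that submodules of flat modules are flat) that the truncated complex still has $V$-flat terms and finitely generated $\lie$-cohomology, hence is quasi-isomorphic to a perfect complex. Only then is Raynaud's Corollary 3.3.3 applicable.

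Your alternative route through the formal logarithm does not sidestep this. The assertion that $\widehat{H}\cong\Spf R[[x_1,\dots,x_d]]$ is unjustified: the flat closure $H$ is $R$-flat with smooth generic fibre, but nothing prevents $H_k$ from being non-smooth, so $\widehat{H}$ need not be a formal Lie group over $R$. Likewise, the sentence ``the formal logarithm \ldots\ carries $\widehat{G}$ isomorphically onto $\widehat{H}$ exactly when $e\le p-1$'' presupposes that $\widehat{G}$ is already formally smooth over $R$ --- which is exactly the conclusion one is trying to establish. In Raynaud's framework, the smoothness of $\widehat{\PIC}$ is the \emph{output} of Corollary 3.3.3, not an input, and obtaining it requires the perfect-complex machinery above. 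The phrase ``Raynaud's argument should run verbatim'' thus hides the entire content of the appendix.
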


Our proof will be a copy of the proof of \cite[Theorem 4.1.2]{Ray79}. Most of the work is already done in \emph{loc.cit.}, we just need to generalize few results.

\subsection{Cartier Theory}
In this subsection, we recall some results on the Cartier theory of formal groups, following  \cite[\S2]{Ray79} and \cite{Zink} (see also the english translation \cite{Zinkweb}).

Let $R$ be a commutative unitary $\bbZ_{(p)}$-algebra and  denote by $\text{Nil}(R)$ the category of nilpotent $R$-algebras, i.e. $R$-algebras whose elements are nilpotent.
We denote by \textbf{Ab} the (abelian) category of abelian groups.

%\Roberto{Questo e' il miglior compromesso che mi e' venuto in mente. Ho distinto formal group functor (come lo chiama Raynaud) da formal group (come lo chiama Zink).}\Filippo{Ottimo!}
\begin{defin}\label{D:form-gr-ring}A \emph{(abelian) formal group functor over $R$} is a functor $G:\text{Nil}(R)\to \textbf{Ab}$. We denote by $\mathcal G$ ($=\mathcal G_R$) the category of formal group functors over $R$.\\
A \emph{(abelian) formal group $G$ over $R$} is an exact formal group functor over $R$, which commutes with arbitrary direct sums. We denote by $\mathcal G^{o}$ ($=\mathcal G_R^{o}$) the category of formal groups over $R$.
\end{defin}

\begin{rmk}We remark that a formal group is always formally smooth, because it is right exact.
\end{rmk}

Any $R$-module $M$ can be seen as a nilpotent $R$-algebra $M_{Nil}:=M$, by setting $M_{Nil}^2=0$. In particular, the ring $R$, as free $R$-module, gives a nilpotent $R$-algebra $R_{Nil}$. For any formal group $G$, the abelian group $G(R_{Nil})$ carries a canonical $R$-module structure. We denote by $\lie(G)$ such $R$-module (for more details see \cite[\S4]{Zink}). 
%\Filippo{Domanda: $\lie(G)$ e' $R$-modulo piatto?  Alla luce di quello che viene dopo, mi sembra che dovrebbe essere cosi}\Roberto{e' gia' detto nel teorema 7.9, ma si puo dire anche ora immagino}.
If $R=K$ is a field, we define the dimension of $G$ as 
$
\dim_KG:=\operatorname{rank}_K\lie(G). 
$
%\Roberto{ho aggiunto la definizione di dimensione, sperando sia quella giusta e ho messo questa definizione di gruppo liscio.}\Filippo{Tenendo conto che un formal group e' sempre formally smooth, allora questa definizione ha senso} 

\begin{rmk}Given a group scheme $H$ over $R$, we may define a formal group functor
	$$
	\begin{array}{cccl}
	\widehat{H}:&\text{Nil}(R)&\to& \textbf{Ab}\\
	&A&\mapsto&\ker\{H(R\oplus A)\to H(R)\}
	\end{array}
	$$
where $R\oplus A$ is the unitary commutative $R$-algebra, whose multiplication is defined by
\begin{equation}\label{E:aug-alg} 
(r_1,a_1)\cdot (r_2,a_2):=(r_1r_2,r_1a_2+r_2a_1+a_1a_2).
\end{equation}
\end{rmk}
Following the above remark, we would like to have a definition of smoothness for formal group compatible with the smoothness for group scheme.
\begin{defin}A formal group $G$ over $R$ is \emph{smooth} if $\lie(G)$ is a coherent $R$-module. We denote by $\mt G^{sm}$ ($=\mt G^{sm}_R$) the subcategory of $\mt G^o$ of formal smooth groups over $R$.
\end{defin}

%\begin{rmk}\label{R:pro-nil}\Filippo{Ma questo Rmk dove si usa?}\Roberto{alla fine della dimostrazione del teorema principale, se riesco a capire come finirla}A (smooth) formal group    $G$ over $R$ extends uniquely to a functor 
%	$$
%	\text{ProNil}(R)\to \textbf{Ab}:\{N\}_{i\in I}\mapsto \lim_{\leftarrow}G(N_i)
%	$$
%taking values on filtered projective systems of nilpotent $R$-algebras. With abuse of notation, we will denote the extended functor with the same symbol $G$. Note that if $(R,\mathfrak m)$ is a complete DVR, then $\mathfrak m=\lim_{\leftarrow}\mathfrak m/\mathfrak m^{n+1}$ is an object in $\text{ProNil}(R)$.
%\end{rmk}
The Cartier theory describes the formal groups over $R$ in term of modules over a certain ring that we now introduce. We denote by:
\begin{itemize}
	\item $W(R)$ the \emph{ring of Witt vectors} with coefficients in $R$.
	\item $\wpr(=\wpr_{R}):\text{Nil}(R)\to \textbf{Ab}$ the \emph{Witt formal group functor}, which sends a nilpotent $R$-algebra $A$ to $W(A)$.
	\item $\fwr(=\fwr_R)$ the \emph{Witt formal group}: it is the sub-functor of the Witt pseudo-ring such that
	$$
	\fwr(A):=\{(a_0,\ldots,a_n,\ldots)\in W(A)\mid a_i=0 \text{ for all except finitely many $i$}\}.
	$$
	It is a formal group over $R$.
	\item $\mathbb E(=\mathbb E_R)$ the endomorphisms ring of the formal group  $\fwr$, which acts on the right on $\fwr$.
\end{itemize}
The ring $\mathbb E$ contains elements $V$, $F$ and $\{[a]\,|\,a\in R\}$ satisfying certain relations (see \cite[Prop. 2.1.1(ii)]{Ray79}). Furthermore, they generate the entire ring. More precisely, any element in the ring $\mathbb E$ has a unique expression (see \cite[Prop. 2.1.1(i)]{Ray79})
$$
\sum_{m,n\geq 0}V^m[a_{m,n}]F^n
$$
such that for every $m$ it holds  that $a_{m,n}=0$ for all except finitely many $n$. Given an $\mathbb E$-module $M$, the quotient $M/VM$ has a canonical structure as $R$-module, given by $a.(m+VM):=[a]m+VM$ for any $a\in R$ and $m\in M$. We denote such $R$-module by $\lie(M)$. 
\begin{defin}Let $M$ be an $\mathbb E$-module.
\begin{enumerate}[(a)]
\item $M$ is \emph{without $V$-torsion} if $V:M\to M$ is injective.
\item $M$ is \emph{$V$-flat} if $M$ is without $V$-torsion and $\lie(M)$ is a flat $R$-module.
\item $M$ is \emph{$V$-coherent} if $M$ is without $V$-torsion and $\lie(M)$ is a coherent $R$-module.
\end{enumerate}
\end{defin}
%
%Let $V\in E$ the operator defined on the ghost-coordinates $w_i(x)=x_o^{p^i}+\ldots+p^ix_i$ by the left scaling
%$$
%(w_0(x),w_1(x),\ldots)V=(w_1(x),\ldots).
%$$
%If $A$ have characteristic $p$, then $V$ is the Frobenius homomorphism. \Roberto{Domanda: ma e' il Frob del Witt ring questo? Perche' Raynaud lo denota con $V$ mentre con $F$ denota l operatore di shift a destra. E' la notazione strana oppure $V$ non e' il frobenius?.} 
%
%\begin{defin}Let $M$ an $E$-module. $M$ is \emph{without torsion} if $m\in M$ such that $V(m)=0$ then $m=0$. $M$ is $V$-flat if it is without torsion and $M/VM$ is a flat $R$-module\footnote{The quotient $M/VM$ has canonical structure of $R$-module, see \cite[p.95]{Ray79}}.
%\end{defin}
%
%\begin{rmk}If $N\subset M$ is an $E$-submodule. If $M$ is $V$ flat then $N$ is flat.
%
%\end{rmk}
%
%Given an abelian category $\mathscr M$, we denote by $C^-(\mathscr M)$ the category of complexes on $\mathscr M$ bounded on the right.
Given a formal group functor $G$, the group of homomorphisms of formal group functors
$$
\text{Hom}_{\mathcal G}\Big(\fwr,G\Big)
$$
has a natural structure of left $\mathbb E$-module induced by the action of $\mathbb E$ on $\fwr$. On the other hand, given an $\mathbb{E}$-module $M$, we may define a formal group functor
$$
\begin{array}{cccl}
\fwr\otimes_{\mathbb E} M:&\text{Nil}(R)&\to &\textbf{Ab}\\
&A&\mapsto& \fwr(A)\otimes_{\mathbb E} M
\end{array}
$$
Let $\mathcal M(=\mathcal M_R)$ be the abelian category of left $\mathbb E$-modules. By what discussed above, there exist two functors
\begin{equation}
\begin{array}{ccl}
\mathcal G&\xrightarrow{\alpha}&\mathcal M\\
G&\mapsto&\text{Hom}_{\mathcal G}\Big(\fwr,G\Big)
\end{array},\quad 
\begin{array}{ccl}
\mathcal M&\xrightarrow{\beta}&\mathcal G\\
M&\mapsto&\fwr\otimes_{\mathbb E} M
\end{array}
\end{equation}

We present the main theorem of Cartier theory. First of all, we fix some notations.
\begin{defin}Let $\mathcal M^{o}$ be the subcategory of $\mathcal M$ of $\mathbb E$-modules $M$ such that
	\begin{enumerate}[(a)]
		\item $M$ is $V$-flat,
		\item $M$ is complete with respect to the $V$-adic filtration, i.e.
		$M=\lim_{\leftarrow}M/V^nM.
		$
	\end{enumerate}
	Let $\mathcal M^{sm}$ be the subcategory of $\mathcal M^{o}$ whose elements are $V$-coherent.
	\end{defin}

\begin{teo}[Cartier]\label{T:cartier} The functor $\alpha$ induces equivalence of categories
	$$
	\alpha:\mathcal G^{o}\xrightarrow{\cong}\mathcal M^{o}\quad \text{ and }\quad \alpha:\mathcal G^{sm}\xrightarrow{\cong}\mathcal M^{sm},
	$$
whose inverses are given by $\beta$. Furthermore, the functors $\alpha$ and $\beta$ are compatible with the formation of $\lie(-)$, i.e. 
$$
\begin{array}{ccccll}
\lie(\beta(M))&:=&\lie(\fwr\otimes_{\mathbb E}M)&=&\lie(M)&\text{for any }M\in \mathcal M^o\\
\lie(\alpha(G))&:=&\lie\left(\operatorname{Hom}_{\mathcal G}\Big(\fwr,G\Big)\right)&=&\lie(G)&\text{for any }G\in \mathcal G^o
\end{array}
$$
as $R$-modules.
\end{teo}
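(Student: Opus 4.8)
Since Theorem \ref{T:cartier} is the fundamental theorem of Cartier theory, the plan is to deduce it from the standard references: it is proved in detail in Zink's lectures \cite{Zink} (see also the translation \cite{Zinkweb}), and in the formulation adapted to our setting it is \cite[\S 2.2--2.3]{Ray79}. I would nonetheless record the shape of the argument. The heart of the matter is the structure theory of the formal group $\fwr$ and of its endomorphism ring $\mathbb E$: every element of $\mathbb E$ has a unique normal form $\sum_{m,n\geq 0}V^m[a_{m,n}]F^n$ with, for each fixed $m$, $a_{m,n}=0$ for almost all $n$ (cf.\ \cite[Prop.\ 2.1.1]{Ray79}), and $\fwr$ is $V$-adically complete, $\fwr=\varprojlim_n\fwr/V^n\fwr$, with each quotient $\fwr/V^n\fwr$ an iterated extension of copies of $\Ga$. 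From this one reads off, for an arbitrary formal group functor $G$, a concrete description of $\alpha(G)=\operatorname{Hom}_{\mathcal G}(\fwr,G)$ as the $\mathbb E$-module of curves of $G$, together with the fact that $\alpha(G)$ is automatically $V$-adically complete. When $G\in\mathcal G^o$, the exactness and direct-sum compatibility of $G$ force $\alpha(G)$ to be $V$-flat, so that $\alpha(G)\in\mathcal M^o$; dually, for $M\in\mathcal M^o$ the $V$-flatness and $V$-completeness of $M$ are exactly what make $\beta(M)=\fwr\otimes_{\mathbb E}M$ exact and direct-sum compatible, so that $\beta(M)\in\mathcal G^o$.

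The next step is to check that $\alpha$ and $\beta$ are quasi-inverse. For the counit $\beta\alpha(G)=\fwr\otimes_{\mathbb E}\operatorname{Hom}_{\mathcal G}(\fwr,G)\to G$, I would evaluate on a nilpotent $R$-algebra $A$: using the normal form of $\mathbb E$ and the decomposition of $\fwr(A)$ into its $V^m$-pieces, the tensor product $\fwr(A)\otimes_{\mathbb E}\alpha(G)$ unwinds into a direct sum of $A$-indexed pieces that reassemble exactly into the curves of $G$ over $A$, recovering $G(A)$. For the unit $M\to\operatorname{Hom}_{\mathcal G}(\fwr,\fwr\otimes_{\mathbb E}M)$ the same bookkeeping applies, the $V$-completeness of $M$ being what guarantees the map is an isomorphism (its target, being an $\alpha$, is complete automatically). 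The restriction to the smooth subcategories is then formal: granting the compatibility with $\lie(-)$ stated in the theorem, a formal group $G\in\mathcal G^o$ is smooth (i.e.\ $\lie(G)$ is coherent) if and only if $\alpha(G)$ is $V$-coherent, so $\alpha$ carries $\mathcal G^{sm}$ onto $\mathcal M^{sm}$ and back.

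Finally, the compatibility with $\lie(-)$ is the one spot where a genuine, but short, computation is needed, and I would carry it out by evaluating on the square-zero algebra $R_{Nil}$: one has $\lie(\beta(M))=\beta(M)(R_{Nil})=\fwr(R_{Nil})\otimes_{\mathbb E}M$, and on a square-zero algebra the Witt addition and the operator $F$ degenerate (all cross terms and $p$-th powers vanish), which lets one identify $\fwr(R_{Nil})\otimes_{\mathbb E}M$ with $M/VM=\lie(M)$; hence $\lie(\beta(M))=\lie(M)$, and $\lie(\alpha(G))=\lie(G)$ follows by applying this to $M=\alpha(G)$ together with the equivalence already proved. The main obstacle is unquestionably the full faithfulness and essential surjectivity of $\alpha$ --- the verification that the unit and counit above are isomorphisms --- which rests entirely on the explicit structure of $\fwr$ and $\mathbb E$ developed in \cite{Zink} (and summarised in \cite[\S 2]{Ray79}); everything else is formal or the short computation just indicated.
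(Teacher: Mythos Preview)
Your proposal is correct and takes essentially the same approach as the paper: both defer to Zink's reference \cite{Zink} (and \cite{Ray79}) for the proof. In fact the paper's proof is a bare citation---it observes that ``without $V$-torsion and complete'' matches Zink's notion of $V$-reduced (\cite[Definition~4.20]{Zink}) and then invokes \cite[Theorem~4.23]{Zink}---whereas you additionally sketch the shape of the argument (structure of $\mathbb E$, curves, unit/counit, the $\lie$ computation on $R_{Nil}$), which is more than the paper provides but entirely consistent with it.
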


\begin{proof}The conditions of being without $V$-torsion and complete correspond to the definition of $V$-reduced $\mathbb E$-module, see \cite[Definition 4.20]{Zink}. Then the theorem is equivalent to \cite[Theorem 4.23]{Zink}.
\end{proof}

\subsection{Proof of Main Theorem}Unless otherwise stated, in this subsection $R$ will be a commutative unitary $\mathbb{Z}_{(p)}$ algebra. Let $\mathcal X$ be a quasi-compact algebraic stack over $R$ and denote by $\text{Nil}(\mathcal X)$ be the category of nilpotent $\mathcal O_{\mathcal X}$-algebras. As for the affine schemes, we make the following
\begin{defin} An \emph{(abelian) formal group $G$ over $\cX$} is an exact functor
$$
G:\operatorname{Nil}(\cX)\to \textbf{Ab},
$$
which commutes with arbitrary direct sums.
\end{defin}
Note that for any morphism $f:U=\Spec(\widetilde{R})\to\cX$, the pull-back 
$$
G|_{U}:\operatorname{Nil}(\widetilde{R})\cong \operatorname{Nil}(\Spec(\widetilde{R}))\to \operatorname{Nil}(\cX) \to\textbf{Ab}
$$
is a formal group over $\widetilde{R}$ in the sense of Definition \ref{D:form-gr-ring}. In particular, the assignment $(U\to\cX)\mapsto \lie(G|_U)$ defines a sheaf of $\oo_{\cX}$-modules, 
%\Roberto{ho rimosso quasi-coherent, perche' se $R$ e' non Noetheriano non so cosa succede}, 
which will be  denoted by $\lie(G)$. 

\begin{defin}A formal group $G$ over $\cX$ is \emph{smooth} if $\lie(G)$ is a coherent $\oo_{\cX}$-module.
\end{defin}
Note that if $\cX=\Spec(\widetilde R)$ is an affine scheme, the above definitions agree with the definitions of (smooth) formal group over $\widetilde R$ given in the previous subsection. 

For any $\mathcal A\in\text{Nil}(\mathcal X)$, we define a sheaf $G[\mathcal A]$ of abelian groups over $\mathcal X$ (with respect to the lisse-\'etale topology) as it follows:
$$
(U\to\cX)\longmapsto(G|_U)(\mathcal A|_U).
$$
We want to study its cohomology groups $H^i(\mathcal X,G[A])$, with respect to the lisse-\'etale topology. We will show that they can be computed using a suitable \v{C}ech complex by combining Raynaud and an argument due to Brochard. Consider a (smooth) atlas $p:U^0\to\mathcal X$ with $U_0$ affine. The fiber product $U^0\times_{\mathcal X}U^0$ is an algebraic space. Fix an atlas $W^1\to U^0\times_{\mathcal X}U^0,$ with $W^1$ affine scheme and set $U^1:=U^0\bigsqcup W^1$. We obtain a truncated hypercover
$$\xymatrix{
	U^1 \ar@<-.15cm>[r] \ar@<+.15cm>[r] & \ar[l]U^0\ar[r]& \mathcal X
}$$
Arguing as in \cite[proof Lemma A.2.1]{Br2}, let $\mathcal U^\bullet$ be $1$-coskeleton of the above truncated hypercover. It is an hypercover for $\mathcal X$ in the sense of Verdier, see \cite[\href{https://stacks.math.columbia.edu/tag/09VU}{Tag 09VU}]{stacks-project}. Furthermore, for any $i\geq 0$ the term $U^i$ of the hypercover $\mathcal U^\bullet$ is affine. If $U^i=\Spec(R_i)$, we denote by $\fwr_i(=\fwr_{R_i})$ the Witt formal group on $R_i$ and by $\mathbb E_i(=\mathbb E_{R_i})$ its endomorphisms ring. Consider the left $\mathbb E_i$-module
\begin{equation}\label{E:Mi}
M_G^i:=\alpha\left(G_{R_i}\right)=\text{Hom}\Big(\fwr_i,G_{R_i}\Big).
\end{equation}
Let $M_G^{\bullet}$ be \v{C}ech complex of $\mathbb{E}$-modules
\begin{equation}\label{E:Cech-MG}
\cdots\to M_G^{i}\to M_G^{i+1}\to\cdots
\end{equation}
associated to the hypercover $\mathcal U^\bullet$. Furthermore, we denote by $\fwr(\mathcal A)\otimes_\mathbb{E} M_G^\bullet$, the \v{C}ech complex of abelian groups
\begin{equation}\label{E:Cech-MGi}
\cdots\to\fwr_i(\mathcal A|_{U^i})\otimes_{\mathbb E_i}M_G^{i}\longrightarrow\fwr_{i+1}(\mathcal A|_{U^{i+1}})\otimes_{\mathbb E_{i+1}}M_G^{i+1}\to\cdots
\end{equation}
associated to the hypercover $\mathcal U^\bullet$. We now show, generalizing \cite[Thm. 2.7.2]{Ray79},  that it represents the cohomology of the abelian sheaf $G[\mathcal A]$.

\begin{prop}\label{Ray272} Let $\mathcal X$ be a algebraic stack of finite type over $R$ and let $G$ be a smooth formal group on $\cX$. 
On the derived category of abelian groups $D(\textbf{Ab})$ there exists a canonical isomorphism
	$$
	H^\bl(\mathcal X, G[\mathcal A])\cong\fwr(\mathcal A)\otimes_\mathbb{E}M_G^\bullet,
	$$
	functorial in $\mathcal A\in \operatorname{Nil}(\mathcal X)$.
\end{prop}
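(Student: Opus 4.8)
The plan is to follow the strategy of \cite[Thm. 2.7.2]{Ray79} almost verbatim, the only new input being the passage from schemes to algebraic stacks via the Verdier hypercover $\mathcal U^\bullet$, which is already set up above. First I would reduce to the affine case: over each affine term $U^i = \Spec(R_i)$ of the hypercover, Cartier theory (Theorem \ref{T:cartier}) together with Raynaud's analysis \cite[Thm. 2.7.2]{Ray79} gives a canonical quasi-isomorphism of complexes of abelian groups
$$
R\Gamma(U^i, G|_{U^i}[\mathcal A|_{U^i}]) \cong \fwr_i(\mathcal A|_{U^i}) \otimes_{\mathbb E_i} M_G^i,
$$
where $M_G^i = \alpha(G_{R_i})$ as in \eqref{E:Mi}; here one uses that $G$ is smooth, so each $M_G^i$ lies in $\mathcal M^{sm}$ and $G|_{U^i} = \beta(M_G^i)$ reconstructs the formal group. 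Since $U^i$ is affine, the higher cohomology of the (quasi-coherent, as $G$ is smooth) sheaf $G|_{U^i}[\mathcal A|_{U^i}]$ contributes only in degree zero on each $U^i$, so the evaluation $\mathcal A \mapsto \fwr_i(\mathcal A|_{U^i}) \otimes_{\mathbb E_i} M_G^i$ computes $\Gamma(U^i, G|_{U^i}[\mathcal A|_{U^i}])$ up to canonical isomorphism.

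Next I would globalize by descent along the hypercover. By \cite[\href{https://stacks.math.columbia.edu/tag/09VU}{Tag 09VU}]{stacks-project}, the hypercover $\mathcal U^\bullet \to \mathcal X$ computes lisse-\'etale cohomology: there is a spectral sequence (or, better, a canonical isomorphism in $D(\textbf{Ab})$) relating $H^\bullet(\mathcal X, G[\mathcal A])$ to the total complex of the cosimplicial object $[i] \mapsto R\Gamma(U^i, G|_{U^i}[\mathcal A|_{U^i}])$. Feeding in the affine computation of the previous paragraph, this total complex becomes precisely the \v{C}ech complex $\fwr(\mathcal A) \otimes_{\mathbb E} M_G^\bullet$ of \eqref{E:Cech-MGi} associated to $\mathcal U^\bullet$. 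The transition maps in this \v{C}ech complex are induced by the transition maps of the hypercover, which on the Cartier side are exactly the module maps $M_G^i \to M_G^{i+1}$ of \eqref{E:Cech-MG}: this compatibility is the functoriality statement in Cartier theory (the last assertion of Theorem \ref{T:cartier}, plus functoriality of $\alpha$ in the ring $R$). This is the argument that Brochard uses in \cite[Lemma A.2.1]{Br2}, applied here with the sheaf $G[\mathcal A]$ in place of the structure sheaf; I would cite loc. cit. for the bookkeeping.

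Finally, functoriality in $\mathcal A \in \operatorname{Nil}(\mathcal X)$ is inherited from each of the two ingredients: the affine isomorphism of \cite[Thm. 2.7.2]{Ray79} is functorial in the nilpotent algebra, and the hypercover-descent isomorphism is functorial in the abelian sheaf, hence in $\mathcal A$. Assembling these gives the claimed canonical, functorial isomorphism in $D(\textbf{Ab})$.

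The main obstacle I anticipate is the bookkeeping around the hypercover, specifically checking that the \v{C}ech differentials of $M_G^\bullet$ are genuinely compatible with those of the complex $\fwr(\mathcal A)\otimes_\mathbb{E}M_G^\bullet$ under the functors $\alpha,\beta$, and that the fiber products $U^0\times_{\mathcal X}U^0$ being merely algebraic spaces (not schemes) causes no trouble — one must pass to a further affine atlas $W^1$ as indicated, so the terms $U^i$ of the $1$-coskeleton are all affine and the per-term affine computation applies. The subtlety is that $\alpha$ is contravariant-functorial in ring maps in a way that must match the cosimplicial structure; this is exactly the point addressed in \cite[proof of Thm. 2.7.2]{Ray79} for the scheme case and in \cite[Lemma A.2.1]{Br2} for the stacky descent, so I would lean on those two references rather than redo the diagram chase. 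A secondary point worth a sentence is the vanishing of higher cohomology of $G|_{U^i}[\mathcal A|_{U^i}]$ on the affine $U^i$: this holds because smoothness of $G$ makes this sheaf (a successive extension of) quasi-coherent, reducing to Serre vanishing on affines, exactly as in \cite[\S 2.7]{Ray79}.
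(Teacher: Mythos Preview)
Your proposal is correct and follows essentially the same route as the paper: identify each term of the \v{C}ech complex with $(G|_{U^i})(\mathcal A|_{U^i})$ via Cartier's theorem, then use the \v{C}ech-to-derived spectral sequence for the hypercover $\mathcal U^\bullet$, which degenerates because $H^i(V,G[\mathcal A])=0$ for $i>0$ on affines. The paper cites \cite[Lemma 2.7.1]{Ray79} for the affine vanishing and remarks that the proof there (given for the Zariski site) goes through for the lisse-\'etale site; you should make this change of topology explicit rather than just pointing to \cite[\S 2.7]{Ray79}, but your quasi-coherence sketch is exactly the reason it works.
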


\begin{proof}By Cartier Theorem \ref{T:cartier}, 
	there exists a canonical isomorphism of abelian groups 
	$$
	\fwr_{i}(\mathcal A|_{U^i})\otimes_{\mathbb{E}_i}M_G^i\cong (G|_{ U^i})(\mathcal A|_{U^i}),
	$$
	functorial in $\mathcal A$. Furthermore, it gives an isomorphism of complexes (functorial in $\mathcal A$)
	\begin{equation}\label{E:iso-compl}
	\fwr(\mathcal A)\otimes_\mathbb{E}M_G^\bullet\cong C(\mathcal U^\bullet,G[\mathcal A]),
	\end{equation}
	where $C(\mathcal U^\bullet,G[\mathcal A])$ is the  \v{C}ech complex of the sheaf $G[\mathcal A]$ with respect to the hypercover $\mathcal U^\bullet$. Consider the spectral sequence:
	\begin{equation}\label{E:CartLer}
	\check{H}^p(H^q(\mathcal U^\bullet,G[\mathcal A]))\Rightarrow H^{p+q}(\mathcal X,G[\mathcal A]).
	\end{equation}
	with respect to the \v{C}ech cohomology of the abelian sheaf $G[\mathcal A]$ and the hypercover $\mathcal U^\bullet$ (see \cite[\href{https://stacks.math.columbia.edu/tag/09VY}{Tag 09VY}]{stacks-project}). We now claim that 
       \begin{equation}\label{E:van-aff}
       V \text{ affine }\Rightarrow H^i(V,G[\mathcal A])=0 \text{ for }i>0.
	\end{equation}
	This fact has been proved in \cite[Lemma 2.7.1]{Ray79} for cohomology with respect to the Zariski topology. However, the proof in \emph{loc.cit.} still works if we consider the lisse-\'etale cohomology and so the claim holds.

       The vanishing \eqref{E:van-aff} implies that the spectral sequence \eqref{E:CartLer} degenerates and it gives a canonical isomorphism (functorial in $\mathcal A$)
       \begin{equation}\label{E:deg-spect}
       H^p(C(\mathcal U^\bullet,G[\mathcal A]))=\check{H}^p(H^0(\mathcal U^\bullet,G[\mathcal A]))\cong H^p(\mathcal X, G[\mathcal A]).
       \end{equation}
       We conclude by combing \eqref{E:deg-spect} and \eqref{E:iso-compl}.
 %VECCHIA DIMOSTRAZIONE      
%	If we prove that it degenerates, we will have $H^p(C^\bl(\mathcal U,G[\mathcal A])):=\check{H}^p(H^0(\mathcal U,G[\mathcal A]))\cong H^p(\mathcal X, G[\mathcal A])$ and so the assertion. The degeneration of the spectral sequence will follow from
%$$
%\mathcal X \text{ affine }\Rightarrow H^i(\mathcal X,G[\mathcal A])=0.
%$$
%This fact has been proved in \cite[Lemma 2.7.1]{Ray79} for cohomology with respect to the Zariski topology. However, the proof in \emph{loc.cit.} still works if we consider the \'etale cohomology and so the claim holds.
\end{proof}

The main difficulty in generalising the Raynaud results is that an algebraic stack $\cX$ can have infinite cohomological dimension. This fact implies that, even when $\pi:\cX\to \Spec(R)$ is proper, the cohomology of a given coherent sheaf cannot be, in general,  represented by a perfect complex over $\Spec(R)$, i.e. a bounded complex of finite projective $R$-modules. However, if we consider the truncated cohomology, there exists a weaker result, which will be enough for our scope.

The following result is a weaker (i.e. truncated) version of \cite[Thm. 2.7.3]{Ray79}, which however works also for algebraic stacks.

\begin{teo} \label{Ray273} Let $\mathcal X$ be an algebraic stack and $R$ be a complete  DVR over  $\bbZ_{(p)}$\footnote{Note that \cite[Thm. 2.7.3]{Ray79} works for any commutative $\bbZ_{(p)}$-algebra $R$ which is complete for the $p$-adic topology, while we need a much more restricted class of rings.}.
 Suppose we have:
	\begin{itemize}
		\item $\pi:\mathcal X\to \Spec(R)$ a flat and proper morphism,
		\item $G$ a smooth formal group    over $\cX$,
		\item an integer $n\geq 1$.
	\end{itemize}
	Then there exists a bounded complex $P^\bl$ of smooth formal  $R$-groups such that:
	\begin{enumerate}[(i)]
		\item\label{i:273-i} $\lie(P^\bl)$ is a perfect complex of $R$-modules.
		%		\item\label{i:273-ii} $P^\bl$ and $\tau_{\leq m}(R^\bl\pi_*(G))$ \Roberto{aggiungere def di complesso troncato} have the same image on $D^-(p-R)$, the derived category of functors in groups defined over nilpotent $R$-algebras killed by a power of $p$.
		\item\label{i:273-ii} when restricting to the category $\text{Nil}(p\text{-}R)$ of nilpotent $R$-algebras killed by a power of $p$, there is an isomorphism of functors
		$$H^i(P^\bl) \cong R^i\pi_*(G):\text{Nil}(p\text{-} R)\to \textbf{Ab}$$
		for $0 \leq i\leq n-1$, compatible with any base change $R\to R'$, with $R'$ being any commutative (not necessarily Noetherian)  ring.
			\end{enumerate}
\end{teo}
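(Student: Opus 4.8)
The plan is to follow the proof of \cite[Thm. 2.7.3]{Ray79} line by line, replacing the Zariski \v{C}ech complex of an affine cover by the lisse-\'etale \v{C}ech complex associated to the Verdier hypercover $\mathcal U^\bullet$ constructed above, and to use Proposition \ref{Ray272} as the substitute for \cite[Thm. 2.7.2]{Ray79}. First I would fix the hypercover $\mathcal U^\bullet$ with each $U^i=\Spec(R_i)$ affine, and form the \v{C}ech complex $M_G^\bullet$ of $\mathbb E$-modules from \eqref{E:Cech-MG}; by Proposition \ref{Ray272} the functor $\mathcal A\mapsto H^\bl(\mathcal X,G[\mathcal A])$ is computed by $\fwr(\mathcal A)\otimes_{\mathbb E}M_G^\bullet$. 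The key point is that each $M_G^i=\alpha(G_{R_i})$ lies in $\mathcal M_{R_i}^{sm}$ (it is $V$-coherent, being the Cartier module of a smooth formal group over the Noetherian ring $R_i$), so $M_G^\bullet$ is a complex of $V$-coherent $\mathbb E$-modules and $\lie(M_G^\bullet)$ is a complex of coherent $R_i$-modules glued along $\mathcal U^\bullet$, i.e. a complex of coherent sheaves on $\mathcal X$ whose cohomology sheaves are the $R^i\pi_*\mathcal O_{\mathcal X}$-type sheaves attached to $G$.

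Next I would invoke properness and flatness of $\pi$ together with Grothendieck's finiteness theorem for algebraic stacks (e.g. via the atlas and the hypercover, or \cite[Thm 4.7]{Ols07}) to conclude that the truncation $\tau_{\leq n-1}R\pi_*(\lie(G))$ is represented, after the standard trick of replacing a complex of injectives/flats by a quasi-isomorphic bounded-above complex of finite free modules and then truncating in degrees $\leq n-1$, by a bounded complex $L^\bullet$ of finite free $R$-modules computing $H^i$ for $0\le i\le n-1$. The only subtlety relative to the scheme case is the unboundedness of cohomological dimension of $\mathcal X$: this is exactly why the statement is truncated. One passes from the bounded-above complex to an honest bounded complex $L^\bullet$ in degrees $0,\dots,n$ at the cost of controlling only $H^i$ for $i\le n-1$, which is harmless because $H^n$ of the truncation is not claimed. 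Base change compatibility along arbitrary $R\to R'$ follows because $L^\bullet$ is a bounded complex of finite free modules, so $L^\bullet\otimes_R R'$ still computes the (truncated) pushforward by cohomological flatness of $\pi$ and flat base change, exactly as in \cite[Thm. 2.7.3]{Ray79}.

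Finally, to produce the complex $P^\bullet$ of \emph{smooth formal $R$-groups} rather than merely a complex of $R$-modules, I would apply the Cartier equivalence $\beta:\mathcal M_R^{sm}\xrightarrow{\cong}\mathcal G_R^{sm}$ of Theorem \ref{T:cartier} termwise. Concretely, one checks that the \v{C}ech complex $M_G^\bullet$ over $R$ (obtained by descending the $M_G^i$, or equivalently by taking $\mathbb E$-module cohomology of the hypercover) has $V$-coherent terms, applies the finite-free model $L^\bullet$ to get a bounded complex of $\mathbb E$-modules in $\mathcal M_R^{sm}$ with perfect $\lie(-)$, and sets $P^\bullet:=\fwr\otimes_{\mathbb E}L^\bullet$, which is a bounded complex of smooth formal $R$-groups with $\lie(P^\bullet)=\lie(L^\bullet)$ perfect, giving \eqref{i:273-i}. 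For \eqref{i:273-ii}, one restricts to $\text{Nil}(p\text{-}R)$: on this subcategory $\fwr(\mathcal A)\otimes_{\mathbb E}(-)$ is exact and identifies $H^i(P^\bullet)(\mathcal A)$ with $H^i$ of $\fwr(\mathcal A)\otimes_{\mathbb E}M_G^\bullet$, which by Proposition \ref{Ray272} is $H^i(\mathcal X,G[\mathcal A])=R^i\pi_*(G)(\mathcal A)$ for $i\le n-1$; the restriction to $p$-nilpotent algebras is precisely what makes the Witt-vector tensor product behave well, as in \cite{Ray79}. I expect the main obstacle to be the careful bookkeeping in passing from the unbounded $R\pi_*$ to a bounded finite-free truncation while preserving both the $\mathbb E$-module (Cartier) structure and the universal base-change property simultaneously — in \cite{Ray79} this is spread over several lemmas, and here one must re-run it with the lisse-\'etale hypercover in place of an affine Zariski cover, using Proposition \ref{Ray272} and the vanishing \eqref{E:van-aff} at each stage.
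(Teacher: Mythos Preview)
Your overall strategy is right and matches the paper's: compute $R^\bullet\pi_*(G)$ by the \v{C}ech complex $\fwr(\mathcal A)\otimes_{\mathbb E}M_G^\bullet$ via Proposition~\ref{Ray272}, truncate to get something bounded, replace the $\lie$-complex by a perfect model, and then apply \cite[Thm.~2.6.4, Ex.~2.6.5(ii)]{Ray79} to produce $P^\bullet$. But there is a real gap in how you perform the truncation, and it is exactly where the DVR hypothesis enters (cf.\ the footnote in the statement).

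The truncation has to be done at the level of $\mathbb E$-modules, not at the level of $R\pi_*(\lie(G))$. Concretely, one sets
\[
(C^\bullet_{n+1})^i=\begin{cases} M_G^i & i\le n,\\ \ker(d^{n+1}) & i=n+1,\\ 0 & \text{otherwise},\end{cases}
\]
and then one needs that each term is $V$-flat in order to feed $C^\bullet_{n+1}$ into \cite[Thm.~2.6.4]{Ray79}. The $M_G^i$ are $V$-flat because $\pi$ is flat, but $\ker(d^{n+1})$ is the problem: it is automatically without $V$-torsion, and a snake-lemma argument gives $\lie(\ker(d^{n+1}))\hookrightarrow\lie(M_G^{n+1})$, but over a general $p$-adically complete $\bbZ_{(p)}$-algebra a submodule of a flat module is not flat. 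Over a DVR, flat $=$ torsion-free, and this is what makes $\lie(\ker(d^{n+1}))$ flat. This is the reason the theorem is stated only for DVRs, and your sketch never confronts it; in particular your sentence ``applies the finite-free model $L^\bullet$ to get a bounded complex of $\mathbb E$-modules'' conflates the perfect $R$-module complex with the $\mathbb E$-module complex and skips the step that actually fails in general.

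Two smaller points: your claim that the $M_G^i$ are $V$-\emph{coherent} as $\mathbb E_R$-modules is not what is used (they are $V$-flat; their $\lie$ over $R$ is typically not finite); and your assertion that $\fwr(\mathcal A)\otimes_{\mathbb E}(-)$ is exact on $\text{Nil}(p\text{-}R)$ is not the mechanism in play---the passage from $(C^\bullet_{n+1},L^\bullet)$ to $P^\bullet$ goes through \cite[Thm.~2.6.4]{Ray79}, not through an exactness statement for the Witt tensor.
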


\begin{proof}Consider the composition of functors
	$$
	R^{\bl}\pi_*(G):\text{Nil}(R)\xrightarrow{\pi^*}\text{Nil}(\mathcal X)\to D(\textbf{Ab})
	$$
	sending a nilpotent $\mathcal O_R$-algebra $\mathcal A$ to $H^\bl(\cX,G[\pi^*(\mathcal A)])$. We remark that it is not a sheaf in general. Following the notations of \eqref{E:Cech-MGi}, we have a canonical isomorphism
	$$
	\fwr(\mathcal A)\otimes_{\mathbb E} M_G^i\cong \fwr(\pi^*(\mathcal A)|_{U^i})\otimes_{\mathbb E_i}M_G^i,
	$$
	functorial in $\mathcal A\in \text{Nil}(R)$ and compatible with the pull-back maps $U^i\to U^j$ of affine schemes, see \cite[2.4.3]{Ray79}. This, together with Proposition \ref{Ray272}, implies that there is a canonical isomorphism in $D(\textbf{Ab})$, functorial in $\mathcal A\in\text{Nil}(R)$
	\begin{equation}\label{E:G=Cech}
	R^{\bl}\pi_*(G)(\mathcal A)\cong  \fwr(\mathcal A)\otimes_{\mathbb E} M_G^\bullet,
	\end{equation}
	where $ \fwr(\mathcal A)\otimes_{\mathbb E} M_G^\bullet$ is the \v{C}ech complex of abelian groups \eqref{E:Cech-MGi}. Consider the complex of $\mathbb E$-modules
	$$
	(C^\bullet_{n+1})^i:=\begin{cases}
	M_G^{i},& \text{if }i\leq n,\\
	\ker\{M_G^{n+1}\xrightarrow{d^{n+1}} M_G^{n+2}\}, &\text{if }i=n+1,\\
	0,&\text{otherwise.}
	\end{cases}
	$$
	We make the following claims.
	\begin{enumerate}
		\item\label{i:1} $C^\bullet_{n+1}$ is a complex of $V$-flat modules. In particular,
		$$
		H^i(\fwr\overset{\mathbb L}{\otimes}_\mathbb{E}C^\bullet_{n+1})=H^i(\fwr\otimes_\mathbb{E}C^\bullet_{n+1})=R^i\pi_*(G),
		$$
		for $0\leq i\leq n-1$ and it is compatible with arbitrary base changes $R\to R'$.
		\item\label{i:2} For $0\leq i\leq n+1$, then
		$$
		H^i(\lie(C^\bullet_{n+1}))=H^{i}(\lie(M_G^\bullet))=H^{i}(\mathcal X,\lie(G)).
		$$
		In particular, the cohomology groups of $\lie(C^\bullet_{n+1})$ are $R$-modules of finite type.
	\end{enumerate}
	Assume the claim holds. Then $\lie(C^\bullet_{n+1})$ is a bounded complex  of flat $R$-modules with finite type cohomology. Hence, by  Lemma \ref{P:br} below, 
	$\lie(C^\bullet_{n+1})$ is quasi-isomorphic to a perfect complex $L^\bullet$ of $R$-modules. 
	%Vecchi frase
	%By the Lemma \ref{P:br} below, the complex $\lie(C^\bullet_{n+1})$ of flat $R$-modules with finite type cohomology is represented by a perfect complex $L^\bullet$ of $R$-modules. 
	Then the theorem follows by applying \cite[Thm. 2.6.4, Ex. 2.6.5(ii)]{Ray79} at the (bounded from above) complex $C^\bullet_{n+1}$ of $\mathbb E$-modules and the perfect complex $L^\bl$ of $R$-modules.
	
	We now show the claim. The $\mathbb{E}$-modules $M_G^i$ are $V$-flats since $\pi$ is a flat morphism (see \cite[\S 2.5]{Ray79}) and so Claim\eqref{i:1} holds true for $ (C^\bullet_{n+1})^i$ for $i\neq n+1$. We have to show that the $\mathbb E$-module  $(C^\bullet_{n+1})^{n+1}= \ker(d^{n+1})$ is $V$-flat, i.e. that the $R$-module $\lie(\ker(d^{n+1}))$ is flat. Consider the factorization 
	$$
	d^{n+1}: M_G^{n+1}\stackrel{\ov{d^{n+1}}}{\twoheadrightarrow}\Im(d^{n+1})\hookrightarrow M_G^{n+2},
	$$
	and observe that $\ker(d^{n+1})=\ker(\ov{d^{n+1}})$. Consider the commutative diagram of $\mathbb E$-modules
	\begin{equation}\label{E:diag-E}	
	\begin{tikzcd}
		\ker(d^{n+1}) \arrow[hook]{r}\arrow[hook]{d}{V} & M_G^{n+1}\arrow[twoheadrightarrow]{r}{\ov{d^{n+1}}} \arrow[hook]{d}{V} & \Im(d^{n+1}) \arrow[hook]{d}{V} \\
		\ker(d^{n+1}) \arrow[hook]{r}\arrow[twoheadrightarrow]{d}& M_G^{n+1}\arrow[twoheadrightarrow]{r}{\ov{d^{n+1}}} \arrow[twoheadrightarrow]{d} & \Im(d^{n+1}) \arrow[twoheadrightarrow]{d} \\
		\lie(\ker(d^{n+1})) \arrow{r}& \lie(M_G^{n+1})\arrow{r}{\ov{d^{n+1}}} & \lie(\Im(d^{n+1}))
	\end{tikzcd}
	\end{equation}
	where the upper two rows are clearly exact and the three columns are exact by the definition of $\lie(-)$ and the fact that $M_G^{n+1}$ is without $V$-torsion (which implies that also $\ker(d^{n+1})\subseteq M_G^{n+1}$ is without 
	$V$-torsion) and that $\Im(d^{n+1})$ is without $V$-torsion being a submodule of $M_G^{n+2}$ which is without $V$-torsion. These facts and the snake lemma implies that the last row of \eqref{E:diag-E} is also exact, 
	and in particular we get an    
	inclusion 
	\begin{equation}\label{E:inc-ker}
	\lie(\ker(d^{n+1})) \hookrightarrow \lie(M_G^{n+1}).
	\end{equation}
	Now, from this inclusion, we deduce that the $R$-module $\lie(\ker(d^{n+1}))$ is flat since over a DVR flat is equivalent to torsion-free (\cite[\href{https://stacks.math.columbia.edu/tag/0539}{Tag 0539}]{stacks-project}), $\lie(M_G^{n+1})$ is flat and the property of being torsion-free is preserved by taking submodules. This concludes the proof of the Claim\eqref{i:1}.
	
	 On the other hand, Claim\eqref{i:2} is obvious for $i\leq n-1$. It remains to show the cases $i=n, n+1$. Take the exact sequence of $R$-modules
	\begin{equation}\label{E:ex}
	\xymatrix{
		\lie(\ker(d^{n+1}))\ar[r] &\lie(M_G^{n+1})\ar[rr]^{\lie(d^{n+1})}&& \lie(M_G^{n+2})\ar[r]& \lie(\text{coker}(d)^{n+1})\arrow[r]\ar@{=}[d]& 0\\
		&&&&\text{coker}(\lie(d^{n+1}))
	}
	\end{equation}
	The vertical equality follows because the functor $\lie(-):\mathcal M_R\to\operatorname{Mod}(R)$ of abelian categories is right-exact. By previous point, the above sequence is left-exact, i.e.
	\begin{equation}\label{E:ker-lie}
			\lie(\ker(d^{n+1}))=\ker(\lie(d^{n+1})).
	\end{equation}
	In particular,
	$$
	\begin{array}{ccccc}
	\text{Im}\Big\{\lie(M^n_G)\to\lie(\ker (d^{n+1}))\Big\}&=&\text{Im}\Big\{\lie(M^n_G)\to \lie(M^{n+1}_G)\Big\}&\overset{\text{def}}{=}&\text{Im}(\lie(d^n)),\\
	\ker\Big\{\lie(M^n_G)\to\lie(\ker (d^{n+1}))\Big\}&=&\ker\Big\{\lie(M^n_G)\to \lie(M^{n+1}_G)\Big\}&\overset{\text{def}}{=}&\ker(\lie(d^{n})).
	\end{array}
	$$
	From the first equality, we get
	$$
	H^{n+1}(\lie(C^\bullet_{n+1}))=\displaystyle\frac{\lie(\ker(d^{n+1}))}{\text{Im}(\lie(d^n))}=\displaystyle\frac{\ker(\lie(d^{n+1}))}{\text{Im}(\lie(d^n))}\overset{\text{def}}{=}
	H^{n+1}(\lie(M_G^\bullet))).
	$$
	Similarly, from the second equality, we obtain $H^{n}(\lie(C^\bullet_{n+1}))=H^{n}(\lie(M^\bullet_G)))$ and we are done.
\end{proof}
\begin{lem}\label{P:br}	Let $R$ be a Noetherian ring. Let $K^\bullet$ be a bounded complex of flat $R$-modules with finite type cohomology, then there exists a perfect complex $L^\bullet$ of $R$-modules and a quasi-isomorphism $L^\bullet\to K^\bullet$ such that
	$$
	H^i(R'\otimes_R L^\bullet)\cong 	H^i(R'\otimes_R K^\bullet)
	$$
for any $i$ and for any base change $R\to R'$, with $R'$ being any commutative (not necessarily Noetherian) ring.
\end{lem}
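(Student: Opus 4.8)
The plan is to deduce the statement from standard properties of pseudo-coherent and perfect complexes over a Noetherian ring, the only subtlety being that the conclusion has to be insensitive to arbitrary (possibly non-Noetherian) base change. Write $K^i=0$ for $i\notin[a,b]$. Since $R$ is Noetherian, $K^\bullet$ is bounded, and all $H^i(K^\bullet)$ are finite $R$-modules, the complex $K^\bullet$ is pseudo-coherent; resolving from the top (using $H^i(K^\bullet)=0$ for $i>b$) one obtains a bounded-above complex $P^\bullet$ of finite free $R$-modules with $P^i=0$ for $i>b$, together with a quasi-isomorphism $\varphi\colon P^\bullet\to K^\bullet$ (see \cite{stacks-project}). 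Because $K^i=0$ for $i<a$ we also have $H^i(P^\bullet)=0$ for $i<a$, so, setting $Q:=\coker(d_P^{a-1}\colon P^{a-1}\to P^a)$, the canonical truncation $L^\bullet:=\tau_{\geq a}P^\bullet=[\,Q\to P^{a+1}\to\cdots\to P^b\,]$, concentrated in degrees $[a,b]$, comes with a quasi-isomorphism $\psi\colon P^\bullet\to L^\bullet$; in particular $L^\bullet\cong K^\bullet$ in $D(R)$ via the roof $L^\bullet\xleftarrow{\psi}P^\bullet\xrightarrow{\varphi}K^\bullet$.

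The heart of the argument — and the step I expect to be the main obstacle to get cleanly — is to show that $Q$ is a finite projective $R$-module. Since $K^\bullet$ is a bounded complex of flat modules it has $\Tor$-amplitude in $[a,b]$, and hence so does $L^\bullet$ (being isomorphic to $K^\bullet$ in $D(R)$). I would then apply $-\otimes_R^L M$ to the short exact sequence of complexes $0\to\sigma_{\geq a+1}L^\bullet\to L^\bullet\to Q[-a]\to 0$: since $\sigma_{\geq a+1}L^\bullet$ is a complex of flat modules concentrated in degrees $\geq a+1$, a chase in the resulting long exact cohomology sequence (using $H^i(L^\bullet\otimes_R^L M)=0$ for $i<a$) yields $\Tor_j^R(Q,M)=0$ for every $j\geq 1$ and every $R$-module $M$, so $Q$ is flat — this is the usual fact that the last cokernel of a bounded-above complex of flats of finite $\Tor$-amplitude is flat. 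Being a quotient of the finite free module $P^a$, the module $Q$ is finitely generated, hence finitely presented since $R$ is Noetherian, hence projective. Therefore $L^\bullet$ is a bounded complex of finite projective $R$-modules, i.e. perfect.

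It remains to upgrade the isomorphism $L^\bullet\cong K^\bullet$ in $D(R)$ to an honest quasi-isomorphism $L^\bullet\to K^\bullet$ and to verify the base-change compatibility. As $L^\bullet$ is a bounded-above complex of projectives, $\Hom_{D(R)}(L^\bullet,K^\bullet)=\Hom_{K(R)}(L^\bullet,K^\bullet)$, so the above isomorphism is represented by an actual chain map $\lambda\colon L^\bullet\to K^\bullet$, automatically a quasi-isomorphism. Finally, both $L^\bullet$ and $K^\bullet$ are bounded complexes of flat $R$-modules, so for any ring map $R\to R'$ one has $R'\otimes_R L^\bullet=R'\otimes_R^L L^\bullet$ and $R'\otimes_R K^\bullet=R'\otimes_R^L K^\bullet$; since $\lambda$ is an isomorphism in $D(R)$ it induces an isomorphism $R'\otimes_R^L L^\bullet\xrightarrow{\cong}R'\otimes_R^L K^\bullet$ in $D(R')$, whence $H^i(R'\otimes_R L^\bullet)\cong H^i(R'\otimes_R K^\bullet)$ for all $i$, as required. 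All steps except the flatness of $Q$ are bookkeeping; the base-change statement in particular is automatic once both complexes are represented by bounded complexes of flats.
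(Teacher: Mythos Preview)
Your argument is correct. The paper does not give an independent proof but simply refers to \cite[\S5, Lemmas~1 and~2]{Mum70}; what you have written is essentially a modern rendering of Mumford's argument (building a bounded-above complex of finite free modules mapping quasi-isomorphically to $K^\bullet$, truncating, and showing the bottom cokernel is projective), phrased in the language of pseudo-coherence and Tor-amplitude. So your approach coincides with the one the paper defers to, and your write-up makes the base-change compatibility explicit.
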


\begin{proof}See \cite[\S 5 Lemma 1 and 2]{Mum70}.	
\end{proof}

We now focus our attention on the smooth formal group    $\widehat{\mathbb G}_{m}$, i.e. the formal completion of the multiplicative group $\mathbb G_{m}$ along the identity section. We denote by $\widehat{\mathbb G}_{m,\mathcal X}$ the formal group functor induced on $\mathcal X$. 

Let $\pi:\mathcal X\to\Spec(R)$ be a proper and cohomologically flat morphism. Then, the push-forwards $R^i\pi_*\widehat{\mathbb G}_{m,\mathcal X}$ in low degrees along the morphism $\pi:\cX\to \Spec R$ can be described as it follows:
\begin{equation}\label{E:def-compl}
N\to \ker\left\{H^i(\mathcal X_{R\oplus N},\mathbb G_{m,\mathcal X_{R\oplus N}})\to H^i(\mathcal X_{R}, \mathbb G_{m,\mathcal X_{R}})\right\},\quad \text{ for }i=0,1,
\end{equation}
where $R\oplus N$ is the $R$-algebra defined as in \eqref{E:aug-alg}. We remark that, despite the notation, the functor $R^i\pi_*\widehat{\mathbb G}_{m,\mathcal X}$ is not a sheaf in general. We are now ready for

\begin{proof}[Proof of Theorem \ref{T:main-Ray}]From \eqref{E:def-compl}, we get immediately the following equalities
\begin{enumerate}[(a)]
\item $R^0\pi_*(\widehat{\mathbb G}_{m,\mathcal X})=\widehat{\mathbb G}_{m, R}$,
\item $R^1\pi_*(\widehat{\mathbb G}_{m,\mathcal X})$ is the completion $\widehat{\PIC}$ of $\PIC^\tau:=\PIC^\tau_{\mathcal X/R}$ along the identity section,
\item $\lie(\widehat{\mathbb G}_{m,\mathcal X})=\mathcal O_{\mathcal X}$.
\end{enumerate}
Since we are supposing that $\Pi^\tau_{R}$ is equidimensional, we have that 
\begin{equation}\label{E:qu-dim}
\dim_K R^1\pi_*(\widehat{\mathbb G}_{m,\mathcal X})|_K=\dim_K \widehat{\PIC}_K= \dim_k\widehat{\PIC}_k=\dim_k R^1\pi_*(\widehat{\mathbb G}_{m,\mathcal X})|_k.
\end{equation}
Let $P^\bl$ be a  bounded complex of smooth formal  $R$-groups, obtained by applying   Theorem \ref{Ray273}  to the smooth formal group $\widehat{\mathbb G}_{m,\mathcal X}$ and with  $n=3$. By \eqref{E:qu-dim},  we have that
\begin{equation}\label{E:qu-dimP}
\dim_K H^1(P^\bl_K)=\dim_kH^1(P^\bl_k).
\end{equation}
Assume $e\leq p-1$. By \cite[Corollary 3.3.3]{Ray79} and using \eqref{E:qu-dimP}, we get  that $R^1\pi_*(\widehat{\mathbb G}_{m,\mathcal X})=\widehat{\PIC}$ is flat over $R$ and the sheafification $\widetilde{R^2\pi_*}(\widehat{\mathbb G}_{m,\mathcal X})$ with respect to the fppf topology, is representable by a formal group $B$ over $R$. The first fact already proves that $\Pi^{\tau}_{\cX/R}:\PIC^\tau\to \Spec R$ is flat along the zero section if $e\leq p-1$ (in particular $(ii)$ holds). 

Let $\overline{\PIC^\tau_K}$ be the flat closure of $\PIC^\tau_K$ in $\Pi^{\tau}_{\cX/R}:\PIC^\tau\to \Spec R$. If we show that the group scheme $T=\PIC^\tau/\overline{\PIC^\tau_K}$ is trivial for $e<p-1$, we are done.
Fix an element $\overline{x}\in T(k)$. Since $k$ is algebraically closed, there exists a line bundle $\overline L\in \mathcal X_k$ representing the class $\overline{x}$. For proving $T=\{1\}$, it is enough to show that, up to a fppf base change, $\ov L$ lifts to a line bundle on $\mathcal X$. By Formal GAGA for stacks (see \cite{cnrgaga}), we can replace $\mathcal X$ with its completion $\widehat{\mathcal X}$ along the special fiber $\mathcal X_k$. The obstruction of lifting $\ov L$ over $\widehat{\cX}$ is an element in
$$
H^2(\widehat{\mathcal X}, \widehat{\mathbb G}_{m,\widehat{\mathcal X}})=R^2\pi_*(\widehat{\mathbb G}_{m,\widehat{\mathcal X}})(\mathfrak m).
$$
By construction, the obstruction defines an element $\eta\in B(\mathfrak m)$. Then arguing as in \cite[Proof of Theorem 4.1.2]{Ray79}, we get the assertion.
%and it generates a flat finite subgroup $B_{\eta}\subset B$ over $R$, with trivial generic fiber.\Roberto{Questo e' sufficiente a far finire la dim, da aggiungere dimostrazione che $B_{\eta}$ e' piatto e finito} By \cite[Lemma 3.3.6]{Ray79}, $\eta$ is zero when $e<p-1$. Hence, there exists a fppf base change $R\to R'$, such that $\overline{L}$ lifts on $\mathcal X_{R'}$. This concludes the proof.
\end{proof}

\vspace{0,3cm}

\noindent {\bf Acknowledgments.} 
%We thank the referee for her/his careful reading of the paper.
We are  grateful to Marco Antei, Marco Boggi, Andrea Di Lorenzo,  Gabriele Mondello, Nicola Pagani, Roberto Pirisi and Fabio Tonini for useful conversations. 

The first author acknowledges PRIN2017 CUP  E84I19000500006, and the MIUR Excellence Department Project awarded to the Department of Mathematics, University of Rome Tor Vergata, CUP E83C18000100006. 

The second author is a member of the CMUC (Centro de Matem\'atica da Universidade de Coimbra), where part of this work was carried over. 
The second author is member of the project EXPL/MAT-PUR/1162/2021 funded by FCT (Portugal), and of the projects PRIN-2017 ``Advances in Moduli Theory and Birational Classification"
and PRIN-2020 ``Curves, Ricci flat Varieties and their Interactions" funded by MIUR (Italy).

\bibliographystyle{alpha}
\bibliography{BiblioPicMgn}
\end{document}